\def\ps@pprintTitle{%
	\let\@oddhead\@empty
	\let\@evenhead\@empty
	\let\@oddfoot\@empty
	\let\@evenfoot\@oddfoot
}
\definecolor{mred0}{rgb}{0,0.406,0.596}
\definecolor{mcyan}{rgb}{0,0.501,0.501}
\definecolor{mcyan}{rgb}{0,0.501,0.501}
\definecolor{mblack}{rgb}{0,0.406,0.796}
\definecolor{mcgreen}{rgb}{0.180,0.545,0.41}
\definecolor{mred0}{rgb}{0,0.406,0.596}
\definecolor{mred3}{rgb}{0.6,0,0.8}
\definecolor{DarkGreen}{rgb}{0.278,0.701,0.913} 
\definecolor{mred1}{rgb}{0.180,0.545,0.341}
\numberwithin{equation}{section} 
\newtheorem{thm}{Theorem}[section]
\newtheorem{defi}{Definition}[section]
\newtheorem{cor}{Corollary}[section]
\newtheorem{lem}{Lemma}[section]
\newtheorem{rem}{Remark}[section]
\newcommand{\E}{\ensuremath{\Bbb{E}}}
\date{}
\begin{document}
	\maketitle
	\selectlanguage{english}
		\begin{abstract}
		The purpose of this paper is to establish the well-posedness of martingale (probabilistic weak) solutions to stochastic degenerate aggregation–diffusion equations arising in biological and public health contexts. The studied equation is of a stochastic degenerate parabolic type, featuring a nonlinear two-sidedly degenerate diffusion term accounting for repulsion, a locally Lipschitz reaction term representing competitive interactions, and a stochastic perturbation term capturing environmental noise and uncertainty in biological systems. The existence of martingale solutions is proved via an auxiliary nondegenerate stochastic system combined with the Faedo–Galerkin method. Convergence of approximate solutions is established through Prokhorov’s compactness and Skorokhod’s representation theorems, and uniqueness is obtained using a duality approach. Finally, numerical simulations are given to illustrate the impact of environmental noise on aggregation dynamics and the long-term behavior of the system, offering insights that may inspire medical innovation and predictive modeling in public health. \\[1ex] 
		\textit{Keywords}: 	Stochastic partial differential equations,  Weak martingale solutions, Aggregation-diffusion equation, Numerical simulation\\
		\textit{2020 MSC}: 60H15, 35K57, 35A05, 35R60, 92D25
	\end{abstract}
	\begin{frontmatter}
\title{Global weak martingale solutions to a stochastic two-sidedly degenerate aggregation-diffusion equation issued from biology}

	\author{Mostafa Bendahmane \corref{mycorrespondingauthor}}
	\address{Institut de Mathématiques de Bordeaux, Université de Bordeaux, 33076 Bordeaux Cedex, France}
	\ead{mostafa.bendahmane@u-bordeaux.fr}
	\cortext[mycorrespondingauthor]{Corresponding author:  mostafa.bendahmane@u-bordeaux.fr}

	\author{Mohamed Mehdaoui}
	\address{Euromed University of Fez, UEMF, Fez, Morocco}
	\address{MAMCS Group, FST Errachidia, Moulay Ismail University of Meknes, P.O. Box 509, Boutalamine 52000, Errachidia, Morocco}
		\ead{m.mehdaoui@edu.umi.ac.ma, m.mehdaoui@ueuromed.org}

	\author{Mouhcine Tilioua}
	\address{MAMCS Group, FST Errachidia, Moulay Ismail University of Meknes, P.O. Box 509, Boutalamine 52000, Errachidia, Morocco}

	\ead{m.tilioua@umi.ac.ma}
\end{frontmatter}
	\footnotetext[1]{A part of this work was conducted during M. Mehdaoui's three-month visit to \textit{Institut de Mathématiques de Bordeaux (IMB), Université de Bordeaux, France}. His visit was funded by the French Embassy in Morocco and the Moroccan Center for Scientific and Technical Research (CNRST).}
	\newpage
	\section{Introduction and motivation}\label{intro}
Aggregation refers to the process in which living organisms cluster or come together to form a larger structure or mass. Understanding the key features of such a process has been a mainstream topic for theoretical biologists due to its manifestations across multiple sub-disciplines, including cell biology, microbiology, ecology, immunology, and behavioral biology. To mathematically capture these phenomena, remarkable developments in continuum physics have led to the formulation of aggregation–diffusion equations, which describe the process of aggregation while accounting for organism mobility. Beyond their theoretical interest, such models have important implications for public health, guiding strategies in disease prevention, health promotion, and healthcare access, as well as informing medical innovation. We refer the reader, for example, to \cite{di2024kermack,mogilner1999non,bodnar2006integro,burger2008large,laurent2007local,bertozzi2009blow,carrillo2019aggregation,gomez2024beginner} and the references therein. Given a bounded open set $\mathcal{O} \subset \mathbb{R}^d; (d\in {2,3})$ with smooth boundary $\partial \mathcal{O}$ and a time horizon of length $T>0$, such equations are typically of the following form:
\begin{equation}\label{basic}
	\begin{cases}
		\begin{split}
	&\partial_t u-\div\left(a(u) \nabla u-u \nabla \mathcal{K} \ast u \right)=f(u), &&\quad \text{in } Q_T:=\mathcal{O} \times (0,T),\\
	&u(.,0)=u_0,  &&\quad \text{in } \mathcal{O},\\
	&\left(a(u)\nabla u-u \nabla \mathcal{K} \ast u\right) \cdot \overrightarrow{\textbf{n}}=0, \;\; &&\quad \text{on } \Sigma_T:=\partial \mathcal{O} \times (0,T), 
	    \end{split}
    \end{cases}
\end{equation}
wherein $\overrightarrow{\textbf{n}}$ is the outward normal vector and $u$ denotes the density of living organisms throughout $Q_T$ starting from an initial state $u_0$. Moreover, $\partial_t$ and $\div$ stand for the partial derivative with respect to time $t \in (0,T)$ and the divergence operator with respect to $x \in \mathcal{O}$, respectively. The main features of Model \eqref{basic} are threefold. The first is manifested by $\mathcal{K} : \mathbb{R}^d \longrightarrow \mathbb{R}$ which is a kernel describing the attraction law. The latter contributes to the flux through the convolution term $\nabla \mathcal{K} \ast u (x):= \displaystyle \int_\mathcal{O} \nabla \mathcal{K}(x-y)u(y)dy,\; \forall x \in \mathcal{O}.$ The second feature resides in the Fickian diffusion rate $a : \mathbb{R} \longrightarrow \mathbb{R}^+$, which in simplest cases can be assigned constant values $(a(\cdot)\equiv a \geq 0)$. Finally, the last feature is exhibited by the function $ f :  \mathbb{R} \longrightarrow \mathbb{R}$, which captures the growth of living organisms as well as their competition for limited resources such as food, space, or mates.
The first emerging question when dealing with aggregation-diffusion models falling into Model \eqref{basic} pertains to the existence and, if possible, the uniqueness of biologically-feasible solutions. In this regard, notable contributions have been made in the case where $a(\cdot)=a \equiv 0$. However, the existing literature still exhibits critical gaps, particularly concerning nonlinear degenerate diffusion and stochastic influences, which we address next.

\subsection{The general case of a nonlinear density-dependent diffusion coefficient $a : \mathbb{R} \longrightarrow \mathbb{R}^+$}
This case is of particular interest and remains less explored in aggregation-diffusion equations. As a matter of fact, it has only been studied in other biological contexts in the case where the model is deprived of aggregation $(\mathcal{K} \equiv 0)$. In these contexts, the spatial dispersal of organisms is motivated by external factors such as the avoidance of crowding, migration into sparse areas, etc. which cannot be captured merely by the standard random diffusion $a(\cdot)\equiv a>0$ \cite{gurtin1977diffusion,newman1980some,bendahmane2007two,mehdaoui2023analysis,mehdaoui2024optimal,mehdaoui2024new}. In the presence of aggregation $(\mathcal{K} \neq 0)$, the case of a  density-dependent diffusion coefficient has been recently considered by Bendahmane et al. \cite{bendahmane2023optimal}. More precisely, the authors established the existence and uniqueness of positive essentially-bounded weak solutions to Model \eqref{basic} with the following underlying setting:
\begin{enumerate}[label={({A${{_\arabic*}}$})}]
	\item \label{a1} $a \in C^1(\mathbb{R}), a(0)=a(\bar{u})=0$ and $\exists \bar{u}>0,\; a(s)>0,\; \forall 0<s<\bar{u}$;
	\item \label{a2}  $K \in C^2\left(\mathbb{R}^d\right)$ is  non-negative and radially non-increasing such that $$\|K\|_{C^2\left(\mathbb{R}^d\right)}<\infty \text{ and } \displaystyle \int_{\mathbb{R}^d} K(x) dx=1;$$
	\item \label{a3} $f(u):=\alpha u-\mu u^2, \forall u \in \mathbb{R}^+$, where $\alpha>0$ and $\mu>0$ stand for the Malthusian growth rate and the competition rate, respectively. 
\end{enumerate}
From the mathematical modeling perspective, let us briefly underline that Assumption \ref{a1} captures two biological facts. On the one hand, that for a null density of living organisms, the spatial diffusion vanishes and, on the other hand, it also incorporates the so-called \textit{volume-filling effect}, which was previously introduced in \cite{laurenccot2005chemotaxis,wrzosek2006long,ainseba2008reaction,wang2012global}. Such an effect pertains to scenarios where the environment becomes crowded or densely-packed causing the diffusion of living organisms to vanish because there is less free volume or space available for movement.
\subsection{The case of aggregation-diffusion equations in stochastic environments}
It is common knowledge that random fluctuations play an important role in the dynamics of various biological systems \cite{mao2002environmental,gray2011stochastic,collins2012optimal,dhariwal2019global,mehdaoui2023dynamical,mehdaoui2023analysisstoch,mehdaoui2024well,bendahmane2023stochastic,sabbar2024exploring,sabbar2024probabilistic}. Aggregation-diffusion equations are not exempt from this fact. Indeed, when proceeding to calibrate the parameters of such equations from noisy measurements or observations, one needs to know in advance how would the model react to noise and if various established properties in the deterministic case (in the absence of noise) would still hold. This gives rise to the question of extending previous results in regards to mathematical well-posedness, stability, long-time behavior to the stochastic case with different types of noise. To the best of the authors' knowledge and as far as the extension of aggregation-diffusion models falling into Model \eqref{basic} to the stochastic case is concerned, the only published work on the mathematical well-posedness is the very recent one done by Tang and Wang \cite{tang2024strong} where the case of unbounded spatial domains was explored in the absence of a reaction term $(f \equiv 0)$ and for a constant diffusion coefficient $(a \equiv 1)$. More precisely, the authors addressed questions in relation to both local and global existence and uniqueness of strong solutions in both cases of a nonlinear multiplicative noise as well as a non-autonomous linear noise. 

Building on this, our central contribution of this paper is to establish new well-posedness results for a class of stochastic degenerate aggregation-diffusion models where the aforementioned scarcities are surpassed.
Note that our work extends the model developed in \cite{bendahmane2023optimal} to the stochastic case driven by cylindrical Wiener processes, and, in the case of bounded domains, generalizes the model considered in \cite{tang2024strong} to include nonlinear degenerate diffusion together reaction terms. Let us also mention that the extension of the model in \cite{tang2024strong} to the case of a nonlinear non-degenerate diffusion coefficient for bounded spatial domains has not been done. However, since the degenerate case is more challenging, we directly consider it here knowing that the non-degenerate case can be treated by simpler arguments. That being said, we consider the following model:
\begin{equation}\label{basicstoch}
	\begin{cases}
		\begin{split}
			&du=\left(\div\left(a(u) \nabla u-u \nabla \mathcal{K} \ast u\right) +f(u)\right)dt+\sigma(u(t))d\mathcal{W}_t, &&\quad \text{in } Q_T:=\mathcal{O} \times (0,T),\\
			&u(.,0)=u_0,  &&\quad \text{in } \mathcal{O},\\
			&\left(a(u)\nabla u-u \nabla \mathcal{K} \ast u\right) \cdot \overrightarrow{\textbf{n}}=0, \;\; &&\quad \text{on } \Sigma_T:=\partial \mathcal{O} \times (0,T).
		\end{split}
	\end{cases}
\end{equation}
Here, we keep Assumptions \ref{a1}-\ref{a3}. Additionally, we denote by $\left(\mathcal{W}_t\right)_{t \geq 0}$ is a cylindrical Wiener process with a noise intensity $\sigma : \mathbb{R} \longrightarrow \mathbb{R}$ which satisfies certain conditions that will be specified in the sequel.

We arrange the rest of this paper as follows. In Section \ref{section2}, we introduce some basic definitions in relation to the stochastic integral with respect to cylindrical Wiener processes, we define the notion of weak martingale solutions and we state the main result of the paper, whose proof is arranged thereafter as follows. In Section \ref{section3}, we consider a non-degenerate stochastic system and establish its associated sequence of almost-surely positive essentially-bounded Faedo-Galerkin solutions. Then, we proceed to derive additional properties satisfied by the sequence according to the following manner: we devote Section \ref{section4} to uniform a priori and temporal translation estimates; Section \ref{section5}, to the tightness of the probability laws and Skorokhod's representation; and Section \ref{section7} to the retrieval of weak martingale solutions to Model \eqref{basicstoch} by passage to the limit and the establishment of a uniqueness result based on a duality approach. In Section  \ref{section8}, we provide a few numerical simulations to illustrate the effect of environmental noise and identify further open questions which are briefly discussed in Section \ref{section9}.
\section{Preliminaries, notion of weak martingale solutions and main result}\label{section2}
Let $\left(\Omega,\mathcal{F},\left(\mathcal{F}_t\right)_{t \geq0},\mathbb{P}\right)$  be a filtered probabilistic space such that the filtration $\left(\mathcal{F}_t\right)_{t \geq0}$ satisfies the usual conditions. Given a separable Banach space $\left(\mathbb{B}, \Vert \,.\, \Vert_{\mathbb{B}}\right)$, we denote by $\mathcal{B}(\mathbb{B})$ the Borel $\sigma$-algebra in $\mathbb{B}$. A $\mathbb{B}$-valued random variable is a mapping $ X : \left(\Omega,\mathcal{F}\right) \longrightarrow \left(\mathbb{B},\mathcal{B}(\mathbb{B})\right)$ which is measurable. Given $p \geq 1$, $\mathcal{L}^p\left(\Omega,\mathcal{F},\mathbb{P};\mathbb{B}\right)\overset{\Delta}{=}\mathcal{L}^p\left(\Omega;\mathbb{B}\right)$ denotes the Banach space consisting of all $\mathbb{B}$-valued random variables, equipped with the following (well-defined) norms:
$$
\Vert X \Vert_{\mathcal{L}^p\left(\Omega;\mathbb{B}\right)}:=\left(\mathbb{E} \Vert X \Vert_{\mathbb{B}}^p\right)^{\frac{1}{p}}, \quad \forall X \in \mathcal{L}^p\left(\Omega;\mathbb{B}\right), \quad \forall 1 \leq p < \infty,
$$
and
$$
\Vert X \Vert_{\mathcal{L}^\infty\left(\Omega;\mathbb{B}\right)}:=\underset{\omega \in \Omega}{\sup} \Vert X(\omega) \Vert_{\mathbb{B}}.
$$
A $\mathbb{B}$-valued stochastic process is defined as a collection of random variables $X:=\left(X_t\right)_{0 \leq t \leq T}$. Then, $X$ is said to be measurable if the mapping $X : \left(\Omega \times [0,T],\mathcal{F} \times \mathcal{B}([0,T]) \right)  \longrightarrow  \left(\mathbb{B},\mathcal{B}(\mathbb{B})\right)$
is measurable. On the other hand, $X$ is said to be adapted if $X_t$ is $\mathcal{F}_t$-measurable for all $t \in [0,T]$. 

Before proceeding to define the stochastic integral in a given separable Banach space $\mathbb{B}$, as a first step, one needs to define the type of stochastic processes serving as integrands. The first requirement is predictability. We recall that a stochastic process is said to be predictable if it is measurable as a mapping $X : \left(\Omega \times [0,T],\mathcal{P}_T \times \mathcal{B}([0,T]) \right)  \longrightarrow  \left(\mathbb{B},\mathcal{B}(\mathbb{B})\right)$, where $\mathcal{P}_T$ denotes the $\sigma$-algebra generated by all left-continuous adapted processes in $\Omega \times [0,T].$ Moving on, as a second step, one needs to define the stochastic process with respect to which the integration is performed, which is in this case, the cylindrical Wiener process. To this end, let $\mathbb{H}$ be a Hilbert space with a complete orthonormal basis $\left(\varphi_k\right)_{k \in \mathbb{N}^*}$. A cylindrical Wiener process $\left(\mathcal{W}_t\right)_{0 \leq t \leq T}$ in $\mathbb{H}$ is defined by 
\begin{equation}\label{sum}
\mathcal{W}_t:= \displaystyle \sum_{k \in \mathbb{N}^*} W_k(t) \varphi_k, \quad \forall t \in [0,T],
\end{equation}
where $\left\{\left(W_k(t)\right)_{t \geq 0}, k \in \mathbb{N}\right\}$ is a sequence of independent $\mathcal{F}_t$-adapted one-dimensional Wiener processes. Actually, the series in Equality \eqref{sum} does not converge in the Hilbert space $\mathbb{H}$. That being said, one can always find a larger separable Hilbert space $\mathbb{H}_0$ for which the canonical linear embedding $\mathcal{I} : \mathbb{H} \longrightarrow \mathbb{H}_0$ has a finite Hilbert-Schmidt norm \cite{da2014stochastic}. This consequently yields the convergence of the series in Equality \eqref{sum}. 
Thus, in addition to imposing predictability, as second requirement, one imposes the integrands to belong to the Banach space $\mathcal{L}^2(\Omega;L_2(\mathbb{H};\mathbb{X}))$, where $L_2(\mathbb{H};\mathbb{X})$ stands for the  space of Hilbert-Schmidt operators. For a given integrand $\sigma$ satisfying the previously-mentioned requirements, one defines the stochastic integral with respect to a cylindrical Wiener process $\left(\mathcal{W}_t\right)_{0 \leq t \leq T}$ as follows: 
$$
\displaystyle \int_0^t \sigma_s d\mathcal{W}_s:=\sum_{k \in \mathbb{N}^*} \int_0^t \sigma_{s,k} dW_k(s),
$$
where 
\begin{equation}\label{nt}
\sigma_{s,k}:=\sigma_s \varphi_k,\; \forall s \in [0,T],\; \forall k \in \mathbb{N}^*.
\end{equation}

Prior to defining the notion of weak martingale solutions, let us first introduce some standard functional spaces which will be used throughout the paper. Namely, given $N \in \{1,2,3\}$ by $L^2(\mathcal{O})^N$, we denote the standard (product) Lebesgue space, equipped with a scalar product which for simplicity will be denoted $\langle, \rangle$ independently of $N$. Additionally, by $H^1(\mathcal{O})$ and $H^2(\mathcal{O})$, we denote the first and second order standard Sobolev spaces, respectively. On the other hand, for a Banach space $\left(\mathscr{X},\Vert \,.\,\Vert_{\mathscr{X}}\right)$, by $L^2(0,T;\mathscr{X})$, we denote the Lebesgue-Bochner space. Finally, by $C([0,T];\mathscr{X})$ we denote the Banach space of continuous $\mathscr{X}$-valued functions equipped with the uniform  norm. For more details on the aforementioned spaces, one can consult for instance \cite{adams2003sobolev}.

As far as the noise coefficient is concerned, we assume it to be an operator $\sigma : L^2(\mathcal{O}) \longrightarrow L_2(\mathbb{H};L^2(\mathcal{O}))$ for which the following assumptions are fulfilled: 
\begin{enumerate}[label={({A${{_\arabic*}}$})}]
	\setcounter{enumi}{3}
	\item \label{a4} \textbf{Lipschitz condition}: 
	$$\exists L>0,\; \forall u,v \in L^2(\mathcal{O}),\; \Vert \sigma(u)-\sigma(v) \Vert_{L_2(\mathbb{H};L^2(\mathcal{O}))} \leq L \Vert u-v \Vert_{L^2(\mathcal{O})};$$ 
	\item \label{a5} \textbf{Growth condition}:
	$$\exists C>0\; \forall u \in L^2(\mathcal{O}),\; \Vert \sigma(u) \Vert_{L_2(\mathbb{H};L^2(\mathcal{O}))} \leq C_1 \left(1+\Vert u\Vert_{L^2(\mathcal{O})}\right).$$
\end{enumerate}
\begin{rem}
Consider a Lipchitz real-valued function $\hat{\sigma} : \mathbb{R} \longrightarrow \mathbb{R}$ with a growth condition of the type 
$$
\exists C_2>0,\; \forall u \in \mathbb{R},\; \vert \hat{\sigma}(u) \vert \leq C_2 \left(1+\vert u \vert\right).$$
Additionally, set
$\sigma :  L^2(\mathcal{O}) \ni u \mapsto \sigma(u) \in L_2(\mathbb{H};L^2(\mathcal{O})),$ where $\sigma(u)\varphi_k=\hat{\sigma}(u(.)) \varphi_k$, where $\left(\varphi_k\right)_{k \in \mathbb{N}^*}$ is the orthonormal basis corresponding to $\mathbb{H}$. Then, it follows that $\sigma$ fulfills Assumptions \ref{a4} and \ref{a5}. Needless to say, various choices can be assigned to $\hat{\sigma}$. 
\end{rem}

With the above setting taken into account, we introduce  the  following definition.
\begin{defi}\label{dfwk}
Given a probability measure $\mu$ on $L^2(\mathcal{O})$, a weak martingale solution to Model \eqref{basicstoch} is defined as a couple $\left(\mathscr{S},u\right)$ satisfying the following conditions:
\begin{enumerate}
	\item $\mathscr{S}:=\left(\Omega,\mathcal{F},\{\mathcal{F}_t\}_{0 \leq t \leq T},\mathbb{P},\mathcal{W}\right)$ is a stochastic basis;
	\item $\mathcal{W}:=\left(\displaystyle \sum_{k \in \mathbb{N}^*}  W_k(t) \varphi_k\right)_{0 \leq t \leq T}$ is an $\{\mathcal{F}_t\}_{0 \leq t \leq T}$-adapted cylindrical Brownian motion;
	\item $\mathbb{P}$-almost-surely ($\mathbb{P}$-a.s) $\forall \omega \in \Omega,$ $u(\omega) \in L^\infty(Q_T) \cap C([0,T];L^2(\mathcal{O}))$ and ${A}(u(\omega)) \in L^2(0,T;H^1(\mathcal{O}))$, where  
	$$
	{A}(u(\omega))(x,t):=\displaystyle \int_{0}^{u(\omega)(x,t)} a(r) dr, \quad \forall x \in \mathcal{O},\; \forall (x,t) \in Q_T;
	$$
	\item The probabilistic law of the initial condition $u_0$ is given by 
	$
    \mathbb{P} \circ u_0^{-1}=:\mu;
	$
	\item The following identity holds $\mathbb{P}$-a.s:
\begin{equation}\label{idn}
	\begin{split}
	\int_\mathcal{O} u(t)v\; dx&= \int_\mathcal{O} u_0v\; dx- \int_{0}^t \int_{\mathcal{O}} a(u(s))\nabla u(s) \cdot \nabla v\; dxdt+\int_{0}^t \int_{\mathcal{O}} u(s)\left(\nabla \mathcal{K} \ast u(s)\right) \cdot \nabla v\; dxdt\\
	&+\int_{0}^t \int_{\mathcal{O}} f(u(s))v\; dxdt+\int_{0}^t \int_{\mathcal{O}} \sigma(u(s))v\; dx d\mathcal{W}_s, \quad \forall v \in H^1(\mathcal{O}), \; \forall t \in [0,T].
	\end{split}
\end{equation}
\end{enumerate}
\end{defi}
\begin{rem}\label{rmbdg}
Note that the last term on the right-hand side of Identity \eqref{idn} is well-defined as a direct consequence of the stochastic Fubini theorem, Assumption \ref{a5} along with the following well-known Burkholder-Davis-Gundy (BDG) inequality \cite{revuz2013continuous}:
$$
\mathbb{E}\left(\sup _{t \in[0, T]}\left\|\int_0^t \sigma(u(s)) d \mathcal{W}_s\right\|_{L^2(\mathcal{O})}^p\right) \leq C_3\; \mathbb{E}\left(\int_0^T\| \sigma(u(t))\|_{{L}_2(\mathbb{H} ;L^2(\mathcal{O}))}^2 dt\right)^{\frac{p}{2}}, \quad \forall p \geq 1,
$$
where $\mathbb{E}$ denotes the mathematical expectation in $(\Omega,\mathcal{F},\mathbb{P})$ and $C_3>0$.
\end{rem}
Following the preceding preliminaries and definitions, our main result can be stated as follows. 
\begin{thm}
Let Assumptions \ref{a1}-\ref{a5} be satisfied. Assume further that $0 \leq u_0 \leq \overline{u},\; \mathbb{P}\text{-a.s},$  such that the (initial) probability measure $\mu_{u_0}$ on $L^2(\mathcal{O})$ satisfies the following moment
conditition
$$
\exists q>2, \quad \int_{L^2(\mathcal{O})} \Vert z \Vert_{L^2(\mathcal{O})}^q d\mu_{u_0}(z)< \infty,
$$
and $\sigma(0)=\sigma(\overline{u})=0$. Then, it holds that Model \eqref{basicstoch} has a weak almost-surely positive martingale solution. Moreover, if $(u_1,\mathcal{W})$ and $(u_2,\mathcal{W})$ are two weak martingale solutions with the same initial condition, then
	$$\tilde{u}_1=\tilde{u}_2 \; \tilde{\mathbb{P}}\text{-a.s.} \; \text{in } {Q}_T.$$
\end{thm}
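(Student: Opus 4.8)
The plan is to obtain the solution through a two-level approximation scheme followed by compactness. For a regularisation parameter $\varepsilon>0$, I would first truncate $a$, $f$ and $\sigma$ outside the interval $[0,\overline{u}]$ and replace the degenerate coefficient by $a_\varepsilon:=a+\varepsilon$, so that the auxiliary diffusion is strictly positive while the structural conditions at the endpoints (in particular $\sigma(0)=\sigma(\overline{u})=0$) are preserved. Discretising in space by the Faedo--Galerkin method along the Neumann eigenbasis of $-\Delta$ turns the equation into a finite system of It\^o SDEs with locally Lipschitz coefficients; the a priori estimates below ensure non-explosion, hence a unique global strong solution $u_n^\varepsilon$, and the invariance $0\le u_n^\varepsilon\le\overline{u}$ $\mathbb{P}$-a.s. follows by applying It\^o's formula to $\int_{\mathcal O}(u_n^\varepsilon-\overline{u})_+^2\,dx$ and $\int_{\mathcal O}(u_n^\varepsilon)_-^2\,dx$ and using $\sigma(0)=\sigma(\overline{u})=0$ together with the sign of the truncated reaction at the two endpoints.

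Next I would derive estimates uniform in both $n$ and $\varepsilon$. Applying It\^o's formula to $\tfrac12\|u_n^\varepsilon(t)\|_{L^2(\mathcal O)}^2$ and to $\tfrac1q\|u_n^\varepsilon(t)\|_{L^2(\mathcal O)}^q$, then invoking the BDG inequality of Remark \ref{rmbdg}, the growth condition \ref{a5}, the moment hypothesis on $\mu_{u_0}$ and Gronwall's lemma, gives a bound for $u_n^\varepsilon$ in $\mathbb{E}\big[\sup_{[0,T]}\|\cdot\|_{L^2(\mathcal O)}^q\big]$; the dissipative term of the same computation simultaneously bounds $A(u_n^\varepsilon)$ in $L^2\big(\Omega;L^2(0,T;H^1(\mathcal O))\big)$ and $\sqrt{\varepsilon}\,\nabla u_n^\varepsilon$ in $L^2(\Omega;L^2(Q_T))$. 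Since the stochastic integral prevents a uniform bound on $\partial_t u_n^\varepsilon$, I would complement these with a fractional-in-time estimate $\mathbb{E}\|u_n^\varepsilon\|_{W^{\alpha,2}(0,T;H^{-1}(\mathcal O))}^2\le C$ for some $\alpha\in(0,\tfrac12)$, obtained by treating the drift and the diffusion contributions to the increments separately. Prokhorov's theorem then yields tightness of the laws of $(u_n^\varepsilon,A(u_n^\varepsilon),\mathcal W)$ on $L^2(Q_T)\times(L^2(0,T;H^1(\mathcal O))\text{-weak})\times C([0,T];\mathbb{H}_0)$, with compactness in the first factor read off from an Aubin--Lions--Simon argument adapted to degenerate diffusion (using $A(u_n^\varepsilon)$ together with the time-regularity of $u_n^\varepsilon$), and Skorokhod's representation theorem provides a new stochastic basis $\widetilde{\mathscr S}$ carrying copies converging $\widetilde{\mathbb P}$-a.s., with $\widetilde u_n^\varepsilon\to\widetilde u^\varepsilon$ strongly in $L^2(Q_T)$.

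Passing first to the limit $n\to\infty$ for fixed $\varepsilon$, the strong $L^2(Q_T)$ convergence together with the uniform $L^\infty$ bound handles the bilinear aggregation term $u\,\nabla\mathcal K\ast u$ (continuity of convolution with $\nabla\mathcal K\in L^\infty$) and the reaction $f(u)$ (Lipschitz on $[0,\overline{u}]$); since $u_n^\varepsilon$ is bounded in $L^2(0,T;H^1(\mathcal O))$ for fixed $\varepsilon$, the diffusion flux $a_\varepsilon(\widetilde u_n^\varepsilon)\nabla\widetilde u_n^\varepsilon$ converges weakly to $a_\varepsilon(\widetilde u^\varepsilon)\nabla\widetilde u^\varepsilon$; and the stochastic term is identified by the standard martingale-characterisation argument (verifying that the candidate process is a square-integrable martingale with the prescribed quadratic variation). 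This produces a martingale solution of the nondegenerate problem for each $\varepsilon$, with $\varepsilon$-uniform bounds. Repeating the tightness/Skorokhod step in $\varepsilon$, I would extract $\widetilde u^\varepsilon\to\widetilde u$ strongly in $L^2(Q_T)$ with $0\le\widetilde u\le\overline{u}$, note that $A(\widetilde u^\varepsilon)\to A(\widetilde u)$ strongly and $\nabla A(\widetilde u^\varepsilon)\rightharpoonup\nabla A(\widetilde u)$ in $L^2(Q_T)$, and that the spurious flux $\varepsilon\nabla\widetilde u^\varepsilon=\sqrt{\varepsilon}\cdot\sqrt{\varepsilon}\,\nabla\widetilde u^\varepsilon\to0$ in $L^2(Q_T)$; passing to the limit in the weak formulation, and observing that the regularity $A(\widetilde u)\in L^2(0,T;H^1(\mathcal O))$ and the positivity survive, yields the desired weak almost-surely positive martingale solution.

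For uniqueness I would take two weak martingale solutions $u_1,u_2$ sharing the driving noise (reducing if necessary, by a Gy\"ongy--Krylov/Skorokhod argument, to a common stochastic basis) and set $w:=u_1-u_2$. As the diffusion is degenerate, testing the difference equation directly with $w$ does not control the parabolic term; instead I would use a duality argument. Write $A(u_1)-A(u_2)=\mathfrak{a}\,w$ with $\mathfrak{a}:=\big(A(u_1)-A(u_2)\big)/(u_1-u_2)\ge0$ where $w\neq0$ and $\mathfrak{a}:=0$ otherwise, regularise to $\mathfrak{a}_\delta:=\mathfrak{a}+\delta$, and for a smooth test function $\Theta$ solve pathwise the backward problem $\partial_t\psi_\delta+\mathfrak{a}_\delta\,\Delta\psi_\delta=-\Theta$ in $Q_T$, $\psi_\delta(\cdot,T)=0$, with homogeneous Neumann conditions. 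Pairing the equation for $w$ with $\psi_\delta$, the diffusion contribution becomes $\int_{Q_T}w\,\mathfrak{a}_\delta\Delta\psi_\delta$ up to an error controlled by $\delta$ and the uniform $H^1$-bound on $A(u_i)$; the aggregation and reaction terms are absorbed using the $L^\infty$-bounds on $u_1,u_2$ and the Lipschitz character of $f$; and the stochastic term is handled by applying It\^o's formula to the relevant functional and invoking the BDG inequality together with the Lipschitz condition \ref{a4} on $\sigma$. Letting $\delta\to0$ and then varying $\Theta$ forces $\mathbb{E}\int_{Q_T}w\,\Theta=0$ for all admissible $\Theta$, hence $\widetilde u_1=\widetilde u_2$ $\widetilde{\mathbb P}$-a.s. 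I expect the main obstacle to be precisely this last step: establishing well-posedness and $\delta$-uniform parabolic regularity for the \emph{random} backward dual problem whose principal coefficient $\mathfrak{a}_\delta$ is merely non-negative, bounded and measurable (not continuous) in the degenerate direction, while carefully controlling the interplay between the $\delta$-regularisation, the localisation in $\omega$, and the stochastic integral.
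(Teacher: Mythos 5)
Your existence argument follows essentially the same route as the paper: regularise the diffusion, discretise by Faedo--Galerkin along the Neumann eigenbasis, prove invariance of $[0,\overline{u}]$ via an It\^o/Stampacchia argument using $\sigma(0)=\sigma(\overline{u})=0$, derive $n$- and $\varepsilon$-uniform energy and higher-moment estimates plus a time-regularity estimate in $H^1(\mathcal{O})^*$, and conclude by Prokhorov--Skorokhod and passage to the limit, with the degenerate diffusion handled through $A(u)$ rather than $u$ itself. The differences there are cosmetic and interchangeable: the paper couples the two parameters ($\varepsilon_n=1/n$) and takes a single limit instead of your iterated $n\to\infty$ then $\varepsilon\to 0$; it uses a temporal translation estimate instead of your fractional Sobolev bound; it first proves tightness of the laws of $A(u^n)$ and transfers it to $u^n$ through the continuity of $A^{-1}$; and it identifies the stochastic integral by an a.s.-convergence lemma plus Vitali rather than by martingale characterisation. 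Note also that applying It\^o's formula directly to $\int_{\mathcal{O}}(u)_{-}^2\,dx$ is not licit since $s\mapsto (s_{-})^2$ is only $C^1$; the paper inserts a $C^2$ regularisation $\mathcal{R}_\nu$ and passes $\nu\to 0$, a fix you would need as well.

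The uniqueness part is where you genuinely diverge, and where there is a gap. You propose the classical backward dual parabolic problem $\partial_t\psi_\delta+\mathfrak{a}_\delta\,\Delta\psi_\delta=-\Theta$, $\psi_\delta(\cdot,T)=0$, with $\mathfrak{a}=\bigl(A(u_1)-A(u_2)\bigr)/(u_1-u_2)$. Beyond the regularity issues you already flag, in the stochastic setting this approach has a structural obstruction: $\mathfrak{a}_\delta$ is built from $u_1,u_2$, so the backward solution $\psi_\delta(t)$ depends on the coefficients on $[t,T]$ and is therefore \emph{not} $\mathcal{F}_t$-adapted. The pairing of $\psi_\delta$ with the difference equation produces the term $\int_0^T\!\!\int_{\mathcal{O}}\psi_\delta(t)\,\bigl(\sigma(u_1)-\sigma(u_2)\bigr)\,dx\,d\mathcal{W}_t$ with an anticipating integrand, to which It\^o's formula and the BDG inequality of Remark \ref{rmbdg} simply do not apply; one would need Skorokhod/anticipating calculus, which your sketch does not supply. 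The paper avoids this entirely by using an \emph{elliptic} duality: for each $(\omega,t)$ it solves $-\Delta w=U(\omega,t,\cdot)$ with Neumann data and zero mean, so that $w(t)$ is a deterministic functional of $U(t)$ and hence adapted; testing the difference equation with $w$, using the monotonicity and local Lipschitz property of $A$, Poincar\'e--Wirtinger and Gronwall then gives $\nabla w\equiv 0$, i.e.\ $U\equiv 0$. To repair your argument you should either switch to this $H^{-1}$-type duality or justify rigorously how the anticipating stochastic term is to be handled.
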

\section{Construction of a sequence of Faedo-Galerkin solutions to a non-degenerate system}\label{section3}
\subsection{The non-degenerate system}
The double-sided degeneracy of the nonlinear diffusion coefficient causes a major difficulty in the mathematical analysis of Model \eqref{basicstoch} since the non-degenerate assumption of the type: 
$$
\exists \underline{a}>0,\; a(u)>\underline{a}, \; \forall u \in \mathbb{R}^+,
$$
is lost, causing the lack of applicability of standard parabolic theory. One of the ways to get around this difficulty is to proceed by a regularization technique by first considering the following sequence of non-degenerate diffusion coefficients:
$$
a_\varepsilon(u)=a(u)+\varepsilon,\; \forall \varepsilon>0, \forall u \in \mathbb{R}^+.
$$ 
In the deterministic case $(\sigma \equiv 0)$, this technique transforms the difficulty from directly analyzing the nonlinear degenerate problem to deriving suitable a priori estimate on the deterministic sequence of solutions $\left(u_\varepsilon\right)_{\varepsilon>0}$ to the non-degenerate version of the problem, and then using compactness arguments such as Aubin-Lions lemma \cite{simon1986compact}, let $\varepsilon \rightarrow 0$ (in a suitable topology) and retrieve the solution in the degenerate case. That being said, we aim to extend this technique to the case of weak martingale solutions for the class of degenerate stochastic systems considered in this paper. To this end, given $\varepsilon>0$, we consider the following alternative model:
\begin{equation}\label{basicstochnondeg}
	\begin{cases}
		\begin{split}
			&du=\left(\div\left(a_\varepsilon(u) \nabla u-u \nabla \mathcal{K} \ast u\right)+f(u)\right)dt+\sigma(u(t))d\mathcal{W}_t, &&\quad \text{in } Q_T:=\mathcal{O} \times (0,T),\\
			&u(.,0)=u_0,  &&\quad \text{in } \mathcal{O},\\
			&\left(a(u)\nabla u-u \nabla \mathcal{K} \ast u\right) \cdot \overrightarrow{\textbf{n}}=0, \;\; &&\quad \text{on } \Sigma_T:=\partial \mathcal{O} \times (0,T).
		\end{split}
	\end{cases}
\end{equation}
\begin{rem}
Note that unlike the deterministic case $(\sigma \equiv 0)$, achieving a priori estimates on the solution in the non-degenerate case, alone, does not allow to retrieve the solution in the degenerate case. This is primarily due to Definition \ref{dfwk}, which firstly involves additionally finding the stochastic basis $\mathcal{S}$ and, more importantly, does not involve the dual pairing $\langle \partial_t u,v \rangle_{H^1(\mathcal{O})^* \times H^1(\mathcal{O})}$ thus prohibiting directly using Aubin-lions compactness lemma through an $L^2(0,T;H^1(\mathcal{O})^*)$-type estimate as in \cite{bendahmane2023optimal}.
\end{rem}
\subsection{A sequence of Faedo-Galerkin solutions}
Let $\mathcal{S}$ be a fixed stochastic basis and $u_0 \in \mathcal{L}^2(\Omega;L^2(\mathcal{O}))$ be an initial condition which is $\mathcal{F}_0$-measubrable with a given probabilistic law $\mu$. We start by the following eigenvalue problem:
\begin{equation}\label{eigen}
\begin{cases}
	\begin{split}
	&-\Delta u =\lambda u, \quad &&\text{in } \mathcal{O}, \\
	&\partial_\eta u=0, \quad &&\text{on } \partial \mathcal{O},
	\end{split}
\end{cases}
\end{equation}
where the unknown is the couple $(\lambda,u)$.

\noindent
Let $H^2_N:=\{u \in H^2(\mathcal{O}): \quad \partial_\eta u=0 \text{ on } \partial \mathcal{O}\}.$ By classical elliptic theory, there exists an increasing sequence of eigenvalues $\left(\lambda_k\right)_{k \in \mathbb{N}^*}  \subset \mathbb{R}^+$ and eigenvectors $\left(\ell_k\right)_{k \in \mathbb{N}^*} \subset H^2_N(\mathcal{O})$ for which Problem \eqref{eigen} is satisfied. Additionally, $\left(\ell_k\right)_{k \in \mathbb{N}^*}$ has a regularity which increases in terms of the smoothness of $\mathcal{O}$. Moreover, $\left(\ell_k\right)_{k \in \mathbb{N}^*}$ forms an orthonormal basis in $L^2(\mathcal{O})$. We refer for example to \cite[Section 4]{bendahmane2022martingale} and \cite[Section 3]{garcke2016global} for more details on such results. That being said, we denote by $\Pi_n$ the orthogonal projection from $L^2(\mathcal{O})$ equipped with its usual scalar product $\langle,\rangle$ onto $\Sigma_n:=span\{\ell_1,\cdots,\ell_n\}$ with
$$
\Pi_n u:= \sum_{i=1}^n \langle u, \ell_k \rangle \ell_k, \quad \forall u \in L^2(\mathcal{O}), \forall n \in \mathbb{N}^*.
$$
We now proceed to construct the sequence of Faedo-Galerkin solutions. To this end, let $n \in \mathbb{N}^*$, we define the following approximation:
$$
u^n(t):=\sum_{k=1}^n  c_k^n(t) \ell_k,
$$
with the initial condition 
$$
u^n_0:=\sum_{k=1}^n \langle u_0,\ell_k \rangle \ell_k.
$$
On the other hand, we also approximate the noise coefficient by
$$
\sigma^n_k(u^n(t)):=\sum_{l=1}^n  \langle \sigma_{k}(u^n(t)),\ell_l \rangle  \ell_l, \quad  \forall  k \in \mathbb{N}^* \quad (\textrm{see Formula } \eqref{nt}),
$$
where $\left(c_k^n\right)_{k \in \{1,\cdots,n\}}$ is a finite sequence of functions to be determined such that the following equation holds for all $k \in \{1,\cdots,n\}$, $n\in \mathbb{N}^*$:
\begin{equation}\label{galrk}
	\begin{split}
	\langle u^n(t),\ell_k \rangle&=\langle u^n_0,\ell_k \rangle-\int_0^t \langle a_{\varepsilon_n}(u^n(s)) \nabla u(s), \nabla \ell_k \rangle ds+\int_0^t\langle u^n(s)\left(\nabla \mathcal{K} \ast u^n(s)\right), \nabla \ell_k \rangle ds\\
	&+\int_0^t\langle f_M(u^n(s),\ell_k)\rangle ds+\sum_{l=1}^n \int_{0}^t \langle \sigma_{l}^n(u^n(s)),\ell_k\rangle d\mathcal{W}^l_s, \quad \forall t \in [0,T].
	\end{split}
\end{equation}
\noindent
Herein,

{
$$
\varepsilon_n:=\dfrac{1}{n}, \quad \forall n \in \mathbb{N}^*,
$$
}
and
\begin{equation}
	\label{trunc}
f_M(u):=
\begin{cases} 
\begin{split}
&f(u), \quad &&\text{if } \vert u \vert \leq M,\\
&M \quad &&\text{if } u> M,\\
&-M \quad &&\text{if } u<-M,
\end{split}
\end{cases}
\end{equation}
for a given constant $M>0$ which will be specified thereafter accordingly.

\begin{rem}
The truncation function $f_M$ given by Formula \eqref{trunc} will be used in order to guarantee the local monotonicity condition required for the existence of global solutions to the constructed Faedo-Galerkin problem, as we will explore in the upcoming lemma. 
\end{rem}
\noindent
Owing to the definition of $\Pi_n$, the regularity $u^n(t) \in H^2_N(\mathcal{O}),\; \forall t \in [0,T]$ and the embedding $H^2_N(\mathcal{O}) \hookrightarrow L^\infty(\mathcal{O})$ for a spatial dimension $d\in \{1,2,3\}$, Equation \eqref{galrk} can be rewritten as
\begin{equation}\label{galrkmod}
	\begin{split}
	u^n(t)&=u^n_0+\int_0^t \Pi_n\left( \div\left(a_{\varepsilon_n}(u^n(s)) \nabla u^n(s)-u^n(s)\nabla \mathcal{K} \ast u^n(s)\right)\right)ds+\int_0^t\Pi_n\left( f_M(u^n(s))\right)ds\\
	&+\int_{0}^t \sigma^n(u^n(s))   d\mathcal{W}^n_s, \quad \forall t \in [0,T],
	\end{split}
\end{equation}
which can be seen as a stochastic equation in $\mathbb{R}^n$, with $$\sigma^n(u^n(s))   d\mathcal{W}^n_s\overset{\Delta}{=}\sum_{l=1}^n \sigma^n_l(u^n(t)) d\mathcal{W}^l_s, \quad \forall s \in [0,T].$$

\noindent
Next, we proceed to justify the existence of solutions to Equation \eqref{galrkmod}. More precisely, we establish the following lemma.
\begin{lem}\label{éxisfaédo}
Under Assumptions \ref{a1}-\ref{a5}, it holds that Equation \eqref{galrkmod} has a unique $\{\mathcal{F}_t\}_{0 \leq t \leq T}$-adapted strong solution $\left(u^n(t)\right)_{0 \leq t \leq T}$. Moreover, $u^n \in C([0,T];\Sigma_n)$.
\end{lem}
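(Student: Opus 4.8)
The plan is to read \eqref{galrkmod} as a genuine finite–dimensional Itô system for the coefficient vector $\mathbf c^n(t)=(c_1^n(t),\dots,c_n^n(t))\in\mathbb R^n$ associated with $u^n(t)=\sum_{k=1}^n c_k^n(t)\ell_k$. This is legitimate because, for $\varepsilon_n=1/n$ fixed, every term on the right–hand side of \eqref{galrkmod} stays in $\Sigma_n$: since $\Sigma_n\subset H^2_N(\mathcal{O})\hookrightarrow L^\infty(\mathcal{O})$ in dimension $d\in\{1,2,3\}$, the quantities $a_{\varepsilon_n}(u^n)$, $f_M(u^n)$ and $\nabla\mathcal{K}\ast u^n$ are well defined (the last even of class $W^{1,\infty}$ by \ref{a2}), both divergence terms belong to $L^2(\mathcal{O})$, and $\Pi_n$ maps them back into $\Sigma_n$. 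Projected onto the orthonormal basis $(\ell_k)_{k\le n}$, \eqref{galrkmod} becomes $d\mathbf c^n=\mathbf b(\mathbf c^n)\,dt+\sum_{l=1}^n B_l(\mathbf c^n)\,d\mathcal{W}^l$ for explicit maps $\mathbf b,B_1,\dots,B_n:\mathbb R^n\to\mathbb R^n$.

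First I would check the regularity of these coefficients. The noise part is \emph{globally} Lipschitz: since $\Pi_n$ is a contraction on $L^2(\mathcal{O})$, Assumption \ref{a4} gives $\|\sigma^n(u)-\sigma^n(v)\|_{L_2(\mathbb H;L^2(\mathcal{O}))}\le\|\sigma(u)-\sigma(v)\|_{L_2(\mathbb H;L^2(\mathcal{O}))}\le L\|u-v\|_{L^2(\mathcal{O})}$, i.e. $\sum_l|B_l(\mathbf c)-B_l(\mathbf d)|^2\le L^2|\mathbf c-\mathbf d|^2$, and it has linear growth by \ref{a5}. The drift $\mathbf b$ is \emph{locally} Lipschitz on $\mathbb R^n$: the reaction part is in fact globally Lipschitz in $u^n$ because $f_M$ is globally Lipschitz by the very definition \eqref{trunc} (this is the purpose of the truncation); the aggregation part $\mathbf c\mapsto(-\langle u^n\,\nabla\mathcal{K}\ast u^n,\nabla\ell_k\rangle)_k$ is a bounded bilinear map of $\mathbf c$ by \ref{a2}, hence smooth; and the diffusion part $\mathbf c\mapsto(-\langle a_{\varepsilon_n}(u^n)\nabla u^n,\nabla\ell_k\rangle)_k$ is locally Lipschitz since $a\in C^1(\mathbb R)$ by \ref{a1} and all norms on $\Sigma_n$ are equivalent. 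The classical existence and pathwise–uniqueness theorem for Itô SDEs with locally Lipschitz coefficients then produces a unique $\{\mathcal F_t\}_{0\le t\le T}$–adapted solution with almost surely continuous paths, defined up to a (possibly finite) explosion time $\tau_n=\lim_{m\to\infty}\tau_n^m$, where $\tau_n^m:=\inf\{t:\ \|u^n(t)\|_{L^2(\mathcal{O})}\ge m\}$.

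The heart of the statement — and the step I expect to be the main obstacle — is to show $\tau_n\ge T$ $\mathbb P$-a.s., i.e. non–explosion. For this I would apply Itô's formula to $t\mapsto\|u^n(t\wedge\tau_n^m)\|_{L^2(\mathcal{O})}^2$. Using that $\Pi_n$ is self–adjoint with $u^n(t)\in\Sigma_n$ and integrating by parts (the $\ell_k$ satisfying the Neumann condition), the diffusion contribution is dissipative, $\langle\Pi_n\div(a_{\varepsilon_n}(u^n)\nabla u^n),u^n\rangle=-\langle a_{\varepsilon_n}(u^n)\nabla u^n,\nabla u^n\rangle\le-\tfrac1n\|\nabla u^n\|_{L^2(\mathcal{O})}^2\le0$ since $a\ge0$ by \ref{a1}; the reaction term obeys $\langle f_M(u^n),u^n\rangle\le C(1+\|u^n\|_{L^2(\mathcal{O})}^2)$ because $f_M$ is bounded; the Itô correction is $\|\sigma^n(u^n)\|_{L_2(\mathbb H;L^2(\mathcal{O}))}^2\le C(1+\|u^n\|_{L^2(\mathcal{O})}^2)$ by \ref{a5}; and the stochastic integral is handled through the BDG inequality recalled in Remark \ref{rmbdg}. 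The genuinely delicate term is the nonlocal one: by \ref{a2}, $|\langle u^n\,\nabla\mathcal{K}\ast u^n,\nabla u^n\rangle|\le\|\nabla\mathcal{K}\|_{L^\infty(\mathbb R^d)}\,\|u^n\|_{L^1(\mathcal{O})}\,\|u^n\|_{L^2(\mathcal{O})}\,\|\nabla u^n\|_{L^2(\mathcal{O})}$, and after absorbing $\|\nabla u^n\|_{L^2(\mathcal{O})}$ into the coercive term $\tfrac1n\|\nabla u^n\|_{L^2(\mathcal{O})}^2$ by Young's inequality one is left with a superquadratic power of $\|u^n\|_{L^2(\mathcal{O})}$; this must be tamed either by a stopping–time bootstrap combined with the almost–sure positivity and $L^\infty$–bounds for $u^n$ derived afterwards, or by additionally truncating the drift velocity $\nabla\mathcal{K}\ast u^n$ so that this term becomes globally Lipschitz and of linear growth. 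In either case Gronwall's lemma yields $\sup_m\mathbb E\big(\sup_{0\le t\le T}\|u^n(t\wedge\tau_n^m)\|_{L^2(\mathcal{O})}^2\big)<\infty$, so that $m^2\,\mathbb P(\tau_n^m\le T)$ remains bounded in $m$, whence $\mathbb P(\tau_n\le T)=0$ and the solution is global on $[0,T]$.

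Finally, each $c_k^n$ has almost surely continuous paths on $[0,T]$, so $u^n=\sum_{k=1}^n c_k^n\ell_k\in C([0,T];\Sigma_n)$ $\mathbb P$-a.s., which also justifies a posteriori every integral written in \eqref{galrk}–\eqref{galrkmod}; uniqueness is the pathwise uniqueness already furnished by the local theorem, now valid on all of $[0,T]$.
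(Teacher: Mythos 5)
Your overall framework is viable and genuinely different from the paper's: you invoke the classical theory of finite-dimensional It\^o SDEs with locally Lipschitz coefficients and then try to rule out explosion, whereas the paper writes \eqref{galrkmod} as $du=\mathscr{F}(u)\,dt+\mathscr{G}(u)\,d\mathcal{W}^n$ and cites Krylov's existence theorem under two hypotheses, \emph{weak coercivity} and \emph{local weak monotonicity}. Your local-Lipschitz verification ($\Pi_n$ a contraction, $f_M$ globally Lipschitz, the aggregation term a bounded bilinear map by \ref{a2}, $a\in C^1$ together with norm equivalence on $\Sigma_n$) is in substance the same computation as the paper's verification of local monotonicity via the differences $\mathscr{D}_a$, $\mathscr{D}_{\mathcal K}$, $\mathscr{D}_f$, so that part of your argument is fine and buys nothing more or less than the paper's.

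The genuine gap is exactly where you flag it: globality. Your energy estimate leaves the term $\|\nabla\mathcal K\|_{L^\infty}\|u^n\|_{L^1}\|u^n\|_{L^2}\|\nabla u^n\|_{L^2}$, and after Young's inequality (or after using $\|\nabla u^n\|_{L^2}\le C_n\|u^n\|_{L^2}$ on $\Sigma_n$) you are left with a cubic or quartic power of $\|u^n\|_{L^2}^2$; a differential inequality of the form $y'\le Cy^{3/2}$ genuinely admits finite-time blow-up, so Gronwall does not close and non-explosion does not follow. You propose two remedies but execute neither. The workable one is the bootstrap through the almost-sure bounds $0\le u^n\le\overline u$: run the Stampacchia argument of Lemma \ref{lemmaa} on the process stopped at $\tau_n^m$, conclude $\|u^n\|_{L^2}\le\overline u\,|\mathcal O|^{1/2}$ uniformly in $m$, hence no explosion. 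But note that Lemma \ref{lemmaa} is proved for the modified systems \eqref{intrmgalrkmod} and \eqref{intrmgalrkmod2} and requires the extra hypotheses $0\le u_0\le\overline u$ and $\sigma(0)=\sigma(\overline u)=0$, which are not part of the statement you are proving; making the bootstrap rigorous would therefore reorganize the logic of the whole section. The paper sidesteps the issue by asserting the bounds \eqref{diva}--\eqref{fm}, which are linear in $\|u\|_{\Sigma_n}$ (with constants depending on $n$), so that the drift has linear growth and weak coercivity is immediate. As written, your proof is incomplete at precisely the step you yourself identify as its heart.
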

\begin{proof}
It is clear that Equation \eqref{galrkmod} can be rewritten as a particular case of  the following class of stochastic differential equations:
\begin{equation}
	\begin{cases}
	du(t)=\mathscr{F}(u(t))dt+\mathscr{G}(u(t))d\mathcal{W}^n_s,\\
	u(0)=u^n_0,
	\end{cases}
\end{equation}
which are known to have unique $\{\mathcal{F}_t\}_{0 \leq t \leq T}$-adapted strong solutions provided that the following assumptions are fulfilled (see for example \cite[Theorem 1.2]{krylov1999kolmogorov}): 
\begin{enumerate}
	\item \textbf{Weak coercivity:} $\exists C_4>0, \; \forall u \in \Sigma_n,\quad
	 2 \langle \mathscr{F}(u),u\rangle+ \Vert \mathscr{G}(u) \Vert_{L_2(\mathbb{H};\Sigma_n)}^2 \leq C_4 \left(1+\Vert u \Vert_{\Sigma_n}^2\right)$;
	 \item \textbf{Local weak monotonicity:}  $\forall R>0,\; \exists L(R)>0$ such that $$
	 \underset{i \in \{1,2\}}{\max} \Vert u_i \Vert_{L^2(\mathcal{O})} \leq R \implies 2 \langle \mathscr{F}(u_1)-\mathscr{F}(u_2),u_1-u_2\rangle+ \Vert \mathscr{G}(u_1)-\mathscr{G}(u_2) \Vert_{L_2(\mathbb{H};\Sigma_n)}^2 \leq L(R) \left(\Vert u_1-u_2 \Vert_{\Sigma_n}^2\right).$$
\end{enumerate}
In our case 
$$
\mathscr{F}(u):=\Pi_n\left(\div\left(a_{\varepsilon_n}(u) \nabla u-u\nabla \mathcal{K} \ast u\right)\right)+\Pi_n\left( f_M(u)\right),
$$
and 
$$
\mathscr{G}(u):=\sigma^n(u).
$$
As far as the weak coercivity is concerned, by using the regularity of the orthonormal basis $\left(\ell_k\right)_{k \in \mathbb{N}^*}$, a straightforward calculation and the use of Cauchy-Schwarz inequality allow us to deduce that
\begin{equation}\label{part1}
\Vert \Pi_n\left(\div\left(a_{\varepsilon_n}(u) \nabla u\right)\right)\Vert_{L^2(\mathcal{O})}\leq \sum_{k=1}^n \vert \langle a_{\varepsilon_n}(u) \nabla u,\nabla \ell_k\rangle \vert \Vert \ell_k \Vert_{L^2(\mathcal{O})} \leq C_{\overline{u}} \Vert \nabla u \Vert_{L^2(\mathcal{O})^d} \Vert \nabla \ell_k \Vert_{L^2(\mathcal{O})^d} \Vert \ell_k \Vert_{L^2(\mathcal{O})}.
\end{equation}
By recalling that $u \in \Sigma_n$ and taking into account the orthonormal property of $\left(\ell_k\right)_{k \in \mathbb{N}^*}$, it follows from Inequality \eqref{part1} that
\begin{equation}\label{diva}
	\Vert \Pi_n\left(\div\left(a_{\varepsilon_n}(u) \nabla u\right)\right)\Vert_{L^2(\mathcal{O})}\leq C_{5} \Vert u \Vert_{\Sigma_n}.
\end{equation}
By the same previous analogy and taking  Assumption \ref{a2} into consideration, we obtain that  
\begin{equation}\label{divK}
	\Vert \Pi_n\left(\div\left(-u\nabla \mathcal{K} \ast u\right)\right)\Vert_{L^2(\mathcal{O})}\leq C_{6} \Vert u \Vert_{\Sigma_n}.
\end{equation}
On the other hand, by the global Lipschitz property of the truncation $f_M$, we obtain
\begin{equation}\label{fm}
	\Vert \Pi_n\left(f_M(u)\right)\Vert_{L^2(\mathcal{O})}\leq C_{7} \Vert u \Vert_{\Sigma_n},
\end{equation}
where $C_5$, $C_6$ and $C_7$ are positive constants depending on $n$.

\noindent
Thus, Inequalities \eqref{diva}-\eqref{fm}, Cauchy-Schwarz inequality along with Assumption \ref{a5} yield the weak coercivity.

Let us now establish the local weak monotonicity. For this purpose, let $R>0$ and $u_1,u_2 \in L^2(\mathcal{O})$ such that $\underset{i \in \{1,2\}}{\max} \Vert u_i \Vert_{L^2(\mathcal{O})} \leq R$. Set 
$$
\mathscr{D}_a:=\Pi_n\left(\div\left(a_{\varepsilon_n}(u_1) \nabla u_1\right)\right)-\Pi_n\left(\div\left(a_{\varepsilon_n}(u_2) \nabla u_2\right)\right),
$$
$$
\mathscr{D}_\mathcal{K}:=\Pi_n\left(\div\left(-u_1\nabla \mathcal{K} \ast u_1\right)\right)-\Pi_n\left(\div\left(-u_2\nabla \mathcal{K} \ast u_2\right)\right),
$$
and 
$$
\mathscr{D}_f:=\Pi_n\left( f_M(u_1)\right)-\Pi_n\left(f_M(u_2)\right).
$$
Hence, using Assumption \ref{a1} and the definition of $\Pi_n$, there exists a constant $C_8>0$ depending on $n$ such that
\begin{equation}\label{d1part1}
\Vert \mathscr{D}_a \Vert_{L^2(\mathcal{O})} \leq C_8\left(\Vert a_{\varepsilon_n} (u_1)-a_{\varepsilon_n}(u_2) \Vert_{L^2(\mathcal{O})} \Vert \nabla u_1 \Vert_{L^2(\mathcal{O})^d}+\Vert \nabla u_1-\nabla u_2 \Vert_{L^2(\mathcal{O})^d}\right).
\end{equation}
By using Assumption \ref{a1} once again with the mean value theorem, there exists $\alpha \in (0,1)$ such that
$$
a_{\varepsilon_n}(u_1)-a_{\varepsilon_n}(u_2)=a_{\varepsilon_n}^\prime(\left(1-\alpha\right)u_1+\alpha u_2)(u_1-u_2).
$$
Recalling the fact $u_1-u_2 \in \Sigma_n$ and  $\left(\ell_k\right)_{k \in \mathbb{N}^*} \subset H_N^2(\mathcal{O}) \hookrightarrow L^\infty(\mathcal{O})$ in a spatial dimension $d \in \{1,2,3\}$, it follows that $\left(1-\alpha\right)u_1+\alpha u_2 \in L^\infty(\mathcal{O})$ which by the local Lipschitz property of $a_{\varepsilon_n}$ allows us to obtain from Inequality \eqref{d1part1} that
\begin{equation}\label{d1part2}
	\Vert \mathscr{D}_a \Vert_{L^2(\mathcal{O})} \leq C_9\left(\Vert u_1-u_2 \Vert_{\Sigma_n}\right),
\end{equation}
where $C_9>0$ depends on $a$, $n$ and $R$.

\noindent
By the same arguments used for $\mathcal{D}_a$ and taking into account this time Assumption \ref{a2}, we eventually obtain
\begin{equation}\label{dk}
	\Vert \mathscr{D}_{\mathcal{K}} \Vert_{L^2(\mathcal{O})} \leq C_{10}\left(\Vert u_1-u_2 \Vert_{\Sigma_n}\right),
\end{equation}
where $C_{10}>0$ depends on $\mathcal{K}$, $n$ and $R$.
\noindent
On the other hand, the global Lipschitz property of the truncation $f_M$ allows us to obtain 
\begin{equation}\label{df}
	\Vert \mathscr{D}_{f} \Vert_{L^2(\mathcal{O})} \leq C_{11}\left(\Vert u_1-u_2 \Vert_{\Sigma_n}\right),
\end{equation}
where $C_{11}>0$ depends on $M>0$.

\noindent
Finally, combining Inequalities \eqref{d1part2}-\eqref{df} along with Assumption \ref{a5} and using Cauchy-Schwarz inequality yield the local weak monotonicity.

The proof is thus concluded.
\end{proof}
\subsection{Almost-sure positiveness and boundedness of the sequence of Faedo-Galerkin solutions}
In this section, we proceed to establish two main properties of the sequence of Faedo-Galerkin solutions: almost-sure positiveness and almost-sure boundedness. These properties will be essential to establish the desired uniform a priori estimates in Section \ref{section5}. To this end, we rely on the so-called Stampachia approach used in \cite{chekroun2016stampacchia} and \cite{bendahmane2022martingale}.
\begin{lem}\label{lemmaa}
	Let Assumptions \ref{a1}-\ref{a5} be satisfied and assume further that $0 \leq u_0 \leq \overline{u},\; \mathbb{P}\text{-a.s},$ where $\overline{u}$ is as defined in Assumption \ref{a1}. Then, the following implications hold:
\begin{equation}\label{firstimpli}
\sigma(0)=0 \implies 0 \leq u^n(t), \quad \mathbb{P}\text{-a.s}, \; \forall t \in (0,T),\; \forall \varepsilon>0,\; \forall n \in \mathbb{N}^*.
\end{equation}
\begin{equation}\label{sécoondéimpli}
\sigma(\overline{u})=0 	\implies u^n(t) \leq \overline{u}, \quad \mathbb{P}\text{-a.s}, \; \forall t \in (0,T),\; \forall \varepsilon>0,\; \forall n \in \mathbb{N}^*.
\end{equation}
\end{lem}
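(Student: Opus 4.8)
# Proof Plan for Lemma~\ref{lemmaa}

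The plan is to apply It\^o's formula to a suitable convex regularization of the negative part (respectively the excess over $\overline{u}$) of $u^n$, and then exploit the structural compatibilities coming from Assumptions~\ref{a1}--\ref{a3} together with the hypotheses $\sigma(0)=0$ and $\sigma(\overline{u})=0$ to show that this quantity vanishes in expectation, hence $\mathbb{P}$-a.s. More precisely, for the lower bound \eqref{firstimpli}, I would fix a smooth convex approximation $\psi_\delta$ of $s \mapsto (s^-)^2$ (with $\psi_\delta \to (s^-)^2$, $\psi_\delta' $ bounded and supported in $\{s < \delta\}$, $\psi_\delta'' \geq 0$), set $\Phi_\delta(u^n) := \int_\mathcal{O} \psi_\delta(u^n)\, dx$, and apply the finite-dimensional It\^o formula to $\Phi_\delta(u^n(t))$ using Equation~\eqref{galrkmod} (which is a genuine SDE in $\mathbb{R}^n$, so no infinite-dimensional It\^o formula is needed). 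This produces a drift contribution from the diffusion term $\div(a_{\varepsilon_n}(u^n)\nabla u^n)$, from the aggregation term $-\div(u^n \nabla\mathcal{K}\ast u^n)$, from $f_M(u^n)$, plus a quadratic-variation term $\tfrac12\sum_{l=1}^n \int_\mathcal{O} \psi_\delta''(u^n)\,|\sigma^n_l(u^n)|^2\,dx$, and a martingale term whose expectation vanishes (using the BDG-type control from Remark~\ref{rmbdg} and boundedness of $\psi_\delta'$).

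The key sign considerations are as follows. Integrating by parts, the diffusion term contributes $-\int_\mathcal{O} a_{\varepsilon_n}(u^n)\psi_\delta''(u^n)|\nabla u^n|^2\,dx \leq 0$ since $a_{\varepsilon_n}>0$ and $\psi_\delta''\geq 0$. The aggregation term, after integration by parts, becomes $\int_\mathcal{O} \psi_\delta''(u^n)\,u^n (\nabla\mathcal{K}\ast u^n)\cdot\nabla u^n\,dx$; on the support of $\psi_\delta''(u^n)$ we have $u^n < \delta$, so this term is $O(\delta)$ times bounded quantities (using $\|\mathcal{K}\|_{C^2}<\infty$ from \ref{a2} and the a priori bounds on $u^n$ coming from Lemma~\ref{éxisfaédo}), hence vanishes as $\delta\to 0$. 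For the reaction term, on $\{u^n < \delta\}$ and $\delta$ small, $f_M(u^n) = f(u^n) = \alpha u^n - \mu (u^n)^2$, so $\psi_\delta'(u^n) f_M(u^n)$ is controlled by $C\,\psi_\delta(u^n)$ (since $\psi_\delta'(s)s^- \lesssim \psi_\delta(s)$ and $f$ is linear in leading order near $0$), which is exactly the shape needed to close a Gronwall argument. Finally — and this is where $\sigma(0)=0$ is crucial — the It\^o correction $\tfrac12\sum_l \int_\mathcal{O}\psi_\delta''(u^n)|\sigma^n_l(u^n)|^2\,dx$ must be absorbed: on $\{u^n<\delta\}$ we have $u^n \to 0^-$ effectively (the negative part is small), and by the Lipschitz property \ref{a4} with $\sigma(0)=0$ one gets $|\hat\sigma(u^n)| \leq L|u^n| = L|u^n|$, which on the negative-part support is again $O(\delta)$ — but here one must be careful: $\psi_\delta''$ scales like $\delta^{-1}$ while $|\sigma(u^n)|^2$ scales like $(u^n)^2 = O(\delta^2)$ there, so the product is $O(\delta)\to 0$. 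Passing $\delta\to 0$ in the expectation of the It\^o identity gives $\mathbb{E}\int_\mathcal{O}(u^n(t)^-)^2\,dx \leq C\int_0^t \mathbb{E}\int_\mathcal{O}(u^n(s)^-)^2\,dx\,ds$, whence $u^n(t)^- = 0$ $\mathbb{P}$-a.s.\ by Gronwall, for all $t$ in a countable dense set and then for all $t$ by continuity of $u^n$ (Lemma~\ref{éxisfaédo}).

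For the upper bound \eqref{sécoondéimpli}, I would run the entirely analogous argument on $w^n := u^n - \overline{u}$, using $\psi_\delta((u^n-\overline{u})^+{}^2$-type functionals. The diffusion term still has a favorable sign; the aggregation term is again $O(\delta)$ on the relevant support; for the reaction term one uses $f(\overline{u}) = \alpha\overline{u} - \mu\overline{u}^2$ — here I would note that Assumptions~\ref{a1} and \ref{a3} must be compatible in the sense that $\overline{u}$ is a super-solution level, i.e.\ $f(\overline{u}) \leq 0$ (equivalently $\overline{u} \geq \alpha/\mu$), which is the standard invariant-region hypothesis implicitly required, so that $\psi_\delta'(u^n)f_M(u^n)$ near level $\overline{u}$ is controlled by $C\psi_\delta$; and the It\^o correction is killed by $\sigma(\overline{u})=0$ via $|\hat\sigma(u^n)| \leq L|u^n - \overline{u}|$ on the support, again beating the $\delta^{-1}$ from $\psi_\delta''$. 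A Gronwall argument closes it.

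The main obstacle I anticipate is the careful balancing of the It\^o-correction term: one needs the regularization $\psi_\delta$ chosen so that $\psi_\delta''(s)\,s^2 \to 0$ uniformly as $\delta \to 0$ on the support $\{|s|<\delta\}$ — this forces $\psi_\delta$ to be something like $\psi_\delta(s) = $ a mollified $(s^-)^2$ with $\psi_\delta''$ of order $\delta^{-1}$ only on a set where $s = O(\delta)$, so that the product is $O(\delta)$. Equivalently, one could work with $(s^-)^2$ directly (which is $C^1$ with absolutely continuous derivative and second derivative $2\mathbf{1}_{\{s<0\}}$ in the distributional sense) and justify the It\^o formula for this non-$C^2$ but convex function via a standard approximation argument, in which case the It\^o-correction term is literally $\int_{\{u^n<0\}}|\sigma^n(u^n)|^2\,dx$ and vanishes identically because $\sigma(0)=0$ combined with Lipschitz \ref{a4} gives $\sigma^n(u^n) = 0$ on $\{u^n = 0\}$... more precisely one argues on $\{u^n < 0\}$ and uses that the negative part, if nonzero, contradicts the bound after Gronwall. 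I would adopt this second, cleaner route: use $\beta(s) := (s^-)^2$ directly, note $\beta \in C^1(\mathbb{R})$ with $\beta'(s) = -2s^-$ Lipschitz, apply the It\^o formula valid for such functions to the $\mathbb{R}^n$-valued process $u^n$, and then the diffusion sign, the $O$-smallness of the aggregation and reaction pieces restricted to $\{u^n<0\}$, and $\sigma(0)=0$ do the rest, closing with Gronwall and pathwise continuity.
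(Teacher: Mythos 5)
Your overall skeleton --- It\^o's formula applied to a convex regularization of $\int_\mathcal{O}\left((u^n)^-\right)^2dx$ (resp.\ of $\int_\mathcal{O}\left((\overline{u}-u^n)^-\right)^2dx$), sign of the diffusion contribution, cancellation of the It\^o correction via $\sigma(0)=0$ (resp.\ $\sigma(\overline{u})=0$), and Gronwall --- is exactly the paper's. But you apply it directly to the original Galerkin system \eqref{galrkmod}, and this is where the argument breaks. The paper instead first introduces \emph{auxiliary truncated Galerkin systems} (\eqref{intrmgalrkmod} and \eqref{intrmgalrkmod2}) in which the nonlinearities are evaluated at $(u^n)^+$ (resp.\ cut off above $\overline{u}$ via $\Psi(v)=v\mathbbm{1}_{\{v<\overline{u}\}}$), proves the bound for those, and only then identifies the truncated solution with the original one through the uniqueness statement of Lemma \ref{éxisfaédo}. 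This device is not cosmetic: it makes the aggregation and reaction contributions vanish \emph{identically} on the bad set (where $\mathcal{R}_\nu'\neq 0$ but $(u^n)^+=0$), so that no estimate on them is needed at all, and it forces $a_{\varepsilon_n}((u^n)^+)=\varepsilon_n>0$ on $\{u^n<0\}$.

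Two of your steps are genuinely gapped without that device. First, the sign of the diffusion term: Assumption \ref{a1} only gives $a>0$ on $(0,\overline{u})$, so $a_{\varepsilon_n}(u^n)=a(u^n)+\varepsilon_n$ need not be nonnegative on $\{u^n<0\}$ (take $a(s)=s(\overline{u}-s)$), and your inequality $-\int a_{\varepsilon_n}(u^n)\psi_\delta''(u^n)|\nabla u^n|^2\,dx\leq 0$ is unjustified precisely on the set where $\psi_\delta''$ lives. Second, the aggregation term: your claim that ``on the support of $\psi_\delta''(u^n)$ we have $u^n<\delta$, so the term is $O(\delta)$'' confuses $u^n<\delta$ with $|u^n|<\delta$. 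Any regularization of $(s^-)^2$ has $\psi_\delta''\approx 2$ on essentially all of $\{s<0\}$, where $u^n$ can be arbitrarily negative; the term $\int\psi_\delta''(u^n)\,u^n(\nabla\mathcal{K}\ast u^n)\cdot\nabla u^n\,dx$ is therefore not small, and controlling it (by a further integration by parts giving $\int((u^n)^-)^2\,\Delta\mathcal{K}\ast u^n\,dx$, or by Young's inequality against the diffusion term) introduces Gronwall coefficients involving $\|u^n\|_{L^1}$ or $\|u^n\|_{L^2}$, which are \emph{not yet} known to be bounded at this stage --- the $L^2$ a priori estimate of Lemma \ref{lémmaaprioriés} is proved afterwards and uses the present lemma, so you would need a localization/stopping-time argument you do not supply. (Your treatment of the reaction term is repairable --- the correct point is $f_M(0)=0$ plus the global Lipschitz bound, not linearity of $f$ ``near $0$'' --- and your observation that the upper bound seems to need $f(\overline{u})\leq 0$ is fair; the paper sidesteps it by multiplying the reaction term by $\Psi$ in the auxiliary system. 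Your discussion of the $\delta^{-1}$ scaling of $\psi_\delta''$ is also off: the paper's $\mathcal{R}_\nu''$ is uniformly bounded, and the It\^o correction is handled by $\sigma(0)=0$ plus Lipschitz continuity, as you eventually say.)
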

\begin{proof}
Let $\varepsilon>0$ and $n \in \mathbb{N}^*$. Under the notations and definitions of the preceding subsection, we begin by considering the following first intermediate sequence of Faedo-Galerkin solutions generated by
\begin{equation}\label{intrmgalrkmod}
	\begin{split}
		u^n(t)&=u^n_0+\int_0^t \Pi_n\left( \div\left(a_{\varepsilon_n}(\left(u^n(s)\right)^+) \nabla u^n(s)-\left(u^n(s)\right)^+\nabla \mathcal{K} \ast u^n(s)\right)\right)ds+\int_0^t\Pi_n\left( f_M(\left(u^n(s)\right)^+)\right)ds\\
		&+\int_{0}^t \sigma^n(u^n(s))   d\mathcal{W}^n_s, \quad \forall t \in [0,T],
	\end{split}
\end{equation}
where $u^+:=\max\{0,u\}, \; \forall u \in \mathbb{R},$ stands for the positive part.
\noindent
We further consider the following regularization of the negative part (see e.g. \cite[Section 9]{bendahmane2022martingale}): 
\begin{equation}
		\mathcal{R}_\nu(u):=\begin{cases}	
			\begin{split}
			&u^2- \dfrac{\nu^2}{6}, &&\text{ if } u<-\nu,\\
			&- \dfrac{u^4}{2 \nu^2}-\dfrac{4 u^3}{3 \nu}, &&\text{ if } -\nu \leq u<0,\\
			&0, &&\text{ if } u\geq 0,
	\end{split}	
\end{cases}	
\end{equation}
so that 
$$
\mathcal{R}_{\nu}^{\prime}(u)=\left\{\begin{array}{ll}
	2 u &\text{ if } u<-\nu, \\
	-\frac{2 u^3}{\nu^2}-\frac{4 u^2}{\nu} & \text{ if } -\nu \leq u<0,\\
	0 & \text{ if } u\geq 0,
\end{array} \quad \mathcal{R}_{\nu}^{\prime \prime}(u)= \begin{cases}2 &\text{ if } u<-\nu,\\
	-\frac{6 u^2}{\nu^2}-\frac{8 u}{\nu} & \text{ if } -\nu \leq u<0, \\
	0 & \text{ if } u\geq 0,\end{cases}\right.$$
$$\mathcal{R}_{\nu}(u) \geq 0,\; \mathcal{R}_{\nu}^{\prime}(u) \leq 0,\; \mathcal{R}_{\nu}^{\prime \prime}(u) \geq 0,\; \forall u \in \mathbb{R},$$ 
and uniformly in $u \in \mathbb{R}$ as $\nu \rightarrow 0$, it holds that  $$\mathcal{R}_{\nu}(u) \rightarrow\left(u^{-}\right)^2,$$ $$\mathcal{R}_{\nu}^{\prime}(u) \rightarrow-2 u^{-},$$ and $$\mathcal{R}_{\nu}^{\prime \prime}(u) \rightarrow\left\{\begin{array}{ll}2 & \text { if } u<0, \\ 0 & \text { if } u \geq 0,\end{array}\right.$$
where $u^-:=-\max\{0,-u\}, \; \forall u \in \mathbb{R},$ stands for the negative part.

\noindent
Let $t\in (0,T)$. We apply Itô's and Green's formulas to compute $\displaystyle \int_\mathcal{O} \mathcal{R}_{\nu}\left(u^n(t)\right) dx$ as follows:
\begin{align}\label{itorég}
\int_\mathcal{O} \mathcal{R}_{\nu}\left(u^n(t)\right) dx&=\int_\mathcal{O} \mathcal{R}_{\nu}\left(u^n_0\right) dx \nonumber-\int_0^t \int_\mathcal{O} {\mathcal{R}_\nu}^{\prime \prime}(u^n(s)) a_{\varepsilon_n}(\left(u^n(s)\right)^+) \vert \nabla u^n(s) \vert^2 dx ds \nonumber\\
&+\int_0^t \int_\mathcal{O} {\mathcal{R}_\nu}^{\prime \prime}(u^n(s)) \left(u^n(s)\right)^+\nabla \mathcal{K} \ast u^n(s) \cdot \nabla u^n(s)dxds \nonumber\\
&+\int_0^t \int_{\mathcal{O}} f_M((u^n(s))^+) {\mathcal{R}_\nu}^{\prime}(u^n(s))dx ds\nonumber\\
&+\sum_{k=1}^{n} \int_{0}^t \int_{\mathcal{O}} {\mathcal{R}_\nu}^{\prime}(u^n(s)) \sigma_{k}^n(u^n(s)) dx d\mathcal{W}^n_s\nonumber \\
&+\dfrac{1}{2} \sum_{k=1}^{n} \int_{0}^{t} \int_{\mathcal{O}} {\mathcal{R}_\nu}^{\prime \prime}(u^n(s))  \left(\sigma_{k}^n(u^n(s))\right)^2 dx ds.
\end{align}

\noindent
Now, recalling the definitions of ${\mathcal{R}_\nu}^{\prime}$, ${\mathcal{R}_\nu}^{\prime \prime}$ and $f_M$ and separating the regions $\{u^n \geq 0\}$ and $\{u^n<0\}$,  it follows that
$$-\int_0^t \int_\mathcal{O} {\mathcal{R}_\nu}^{\prime \prime}(u^n(s)) a_{\varepsilon_n}(\left(u^n(s)\right)^+) \vert \nabla u^n(s) \vert^2 dx ds\leq  0,$$
$$\int_0^t \int_\mathcal{O} {\mathcal{R}_\nu}^{\prime \prime}(u^n(s)) \left(u^n(s)\right)^+\nabla \mathcal{K} \ast u^n(s) \cdot \nabla u^n(s)dxds=0,$$
and 
$$\int_0^t \int_{\mathcal{O}} f_M((u^n(s))^+) {\mathcal{R}_\nu}^{\prime}(u^n(s))dxds=0.$$
Consequently, by evaluating the expectation on both sides of Identity \eqref{itorég}, we arrive at
\begin{equation}\label{itorgconsq}
	\mathbb{E} \int_\mathcal{O} \mathcal{R}_{\nu}\left(u^n(t)\right) dx\leq \mathbb{E} \int_\mathcal{O} \mathcal{R}_{\nu}\left(u^n_0\right) dx \nonumber+\dfrac{1}{2} \mathbb{E} \sum_{k=1}^{n} \int_{0}^{t} \int_{\mathcal{O}} {\mathcal{R}_\nu}^{\prime \prime}(u^n(s))  \left(\sigma_{k}^n(u^n(s))\right)^2 dx ds.
\end{equation}
Thus, letting $\nu \rightarrow 0$ and proceeding analogously to \cite[Section 3.2]{chekroun2016stampacchia} while recalling that $\sigma(0)=0$, we obtain
\begin{equation}\label{itorgconsqlim}
	\mathbb{E} \int_\mathcal{O}\left( \left(u^n(t)\right)^-\right)^2 dx\leq \mathbb{E} \int_\mathcal{O} \left({u^n_0}^-\right)^2dx \nonumber+C_{12} \mathbb{E} \sum_{k=1}^{n} \int_{0}^{t} \int_{\mathcal{O}} \left(\left(u^n(s)\right)^-\right)^2  dx ds,
\end{equation}
where $C_{12}>0$.

\noindent
Recalling that $u^n_0 \geq 0\; \mathbb{P}\text{-a.s}$, we use 
Gronwall's inequality to obtain that 
$$u^n(t) \geq 0,\; \mathbb{P}\text{-a.s}, \; \forall t \in (0,T).$$
Hence, the first implication follows by the definition of the sequence generated by Equation \eqref{intrmgalrkmod}.

\noindent

\noindent
To prove the second implication, we consider the second intermediate sequence of Faedo-Galerkin solutions generated by
\begin{equation}\label{intrmgalrkmod2}
	\begin{split}
		v^n(t)&=u^n_0+\int_0^t \Pi_n\left( \div\left(a_{\varepsilon_n}(\left(v^n(s)\right)^+) \nabla v^n(s)-\Psi(v^n(s))\nabla \mathcal{K} \ast v^n(s)\right)\right)ds\\
		&+\int_0^t\Pi_n\left(f_M(v^n(s)) \Psi(v^n(s))\right)ds+\int_{0}^t \sigma^n(v^n(s))   d\mathcal{W}^n_s, \quad \forall t \in [0,T],
	\end{split}
\end{equation}
where 
$
\Psi(v)=v \mathbbm{1}_{\left\{v < \overline{u}\right\}}, \quad \forall  v \in \mathbb{R}.
$

\noindent
Let $t\in (0,T)$. We apply Itô's and Green's formulas to compute $\displaystyle \int_\mathcal{O} \mathcal{R}_{\nu}\left(\overline{u}-v^n(t)\right) dx$ as follows:

\begin{align}\label{itorégv}
	\int_\mathcal{O} \mathcal{R}_{\nu}\left(\overline{u}-v^n(t)\right) dx&=\int_\mathcal{O} \mathcal{R}_{\nu}\left(\overline{u}-u_0^n\right) dx \nonumber-\int_0^t \int_\mathcal{O} {\mathcal{R}_\nu}^{\prime \prime}(\overline{u}-v^n(s)) a_{\varepsilon_n}(v^n(s)) \vert \nabla v^n(s) \vert^2 dx ds \nonumber\\
	&+\int_0^t \int_\mathcal{O} {\mathcal{R}_\nu}^{\prime \prime}(\overline{u}-v^n(s))\Psi(v^n(s))\nabla \mathcal{K} \ast v^n(s) \cdot \nabla v^n(s)dxds \nonumber\\
	&+\int_0^t \int_{\mathcal{O}} f_M(v^n(s)) \Psi(v^n(s)) {\mathcal{R}_\nu}^{\prime}(\overline{u}-v^n(s))dx ds\nonumber\\
	&+\sum_{k=1}^{n} \int_{0}^t \int_{\mathcal{O}} {\mathcal{R}_\nu}^{\prime}(\overline{u}-v^n(s)) \sigma_{k}^n(v^n(s)) dx d\mathcal{W}^n_s\nonumber \\
	&+\dfrac{1}{2} \sum_{k=1}^{n} \int_{0}^{t} \int_{\mathcal{O}} {\mathcal{R}_\nu}^{\prime \prime}(\overline{u}-v^n(s))  \left(\sigma_{k}^n( v^n(s))\right)^2 dx ds.
\end{align}
Now, recalling the definitions of ${\mathcal{R}_\nu}^{\prime}$, ${\mathcal{R}_\nu}^{\prime \prime}$ and $f_M$ and separating the regions $\{v^n \geq \overline{u}\}$ and $\{v^n<\overline{u}\}$, we obtain 
$$-\int_0^t \int_\mathcal{O} {\mathcal{R}_\nu}^{\prime \prime}(\overline{u}-v^n(s)) a_{\varepsilon_n}(\left(v^n(s)\right)^+) \vert \nabla v^n(s) \vert^2 dx ds\leq  0,$$
$$\int_0^t \int_\mathcal{O} {\mathcal{R}_\nu}^{\prime \prime}(\overline{u}-v^n(s))\exp(\kappa t)\Psi(v^n(s))\nabla \mathcal{K} \ast v^n(s) \cdot \nabla v^n(s)dxds=0,$$
\begin{align*}
\int_0^t \int_{\mathcal{O}} f_M(v^n(s)) {\mathcal{R}_\nu}^{\prime}(\overline{u}-v^n(s)) \Psi(v^n(s))dx ds=0,
\end{align*}
consequently
\begin{align}\label{applyéxp}
	\int_\mathcal{O} \mathcal{R}_{\nu}\left(\overline{u}-v^n(t)\right) dx&\leq \int_\mathcal{O} \mathcal{R}_{\nu}\left(\overline{u}-u^n_0\right) dx+\sum_{k=1}^{n} \int_{0}^t \int_{\mathcal{O}} {\mathcal{R}_\nu}^{\prime}(\overline{u}-v^n(s)) \sigma_{k}^n(v^n(s)) dx d\mathcal{W}^n_s\nonumber \\
	&+\dfrac{1}{2} \sum_{k=1}^{n} \int_{0}^{t} \int_{\mathcal{O}} {\mathcal{R}_\nu}^{\prime \prime}(\overline{u}-v^n(s))  \left(\sigma_{k}^n(v^n(s))\right)^2 dx ds. 
\end{align}
Evaluating the expectation on both sides of Inequality \eqref{applyéxp}, sending $\nu \rightarrow 0$ and recalling that $\sigma(\overline{u})=0$,  we obtain
\begin{align}\label{itorégvrés}
	\mathbb{E} \int_\mathcal{O} \left(\left(\overline{u}-v^n(t)\right)^-\right)^2 dx&\leq \mathbb{E} \int_\mathcal{O} \left(\left(\overline{u}-u^n_0\right)^-\right)^2 dx+C_{13} \mathbb{E} \sum_{k=1}^{n} \int_{0}^{t} \int_{\mathcal{O}} \left(\left(\overline{u}-u^n(s)\right)^-\right)^2  dx ds. 
\end{align}
The proof is thus concluded by Gronwall's inequality and the definition of the sequence generated by Equation \eqref{intrmgalrkmod2}.
\end{proof}
In what follows, $C_i\; (i \in \mathbb{N}^*)$ will denote a positive constant which, unless stated otherwise, does not depend on $n$. We also define
\begin{equation}\label{défiA}
A(s):=\int_{0}^{s}  a_{\varepsilon_n}(\theta)d\theta,\;  \forall s \in (0,T).
\end{equation}

\begin{rem}
{Note that the result in Lemma \ref{lemmaa} ensures biological feasibility, as solution remains within $[0,\overline{u}]$, consistent with volume-filling effect.}
\end{rem}
\section{Uniform a priori and temporal translation estimates}\label{section4}
\subsection{Uniform a priori estimates}
First, we let the functions:

$$
A(s):=\int_0^s a(r)\,dr\qquad \text{and}\qquad \mathcal{A}(s):=\int_0^s A(r)\,dr,
$$
and the corresponding nondegenerate functions
$$
A_{\varepsilon_n}(s):=\int_0^s a_{\varepsilon_n}(r)\,dr\qquad \text{and}\qquad \mathcal{A}_{\varepsilon_n}(s):=\int_0^s A_{\varepsilon_n}(r)\,dr,
$$

\noindent The uniform a priori estimate reads as follows.
\begin{lem}\label{lémmaaprioriés}
	Let Assumptions \ref{a1}-\ref{a5} be satisfied and assume further that $0 \leq u_0 \leq \overline{u},\; \mathbb{P}\text{-a.s},$ where $\overline{u}$ is as defined in Assumption \ref{a1} and $\sigma(0)=\sigma(\overline{u})=0$. It holds that the sequence of solutions  $\left(u^n(t)\right)_{0 \leq t \leq T}$ generated by Equation \eqref{galrkmod} satisfies the following a priori estimate: there exists a constant $C>0$ not depending on $n$ such that
\begin{equation}\label{apriori}
\begin{split}
& \mathbb{E} \Vert \mathcal{A}(u^n)(t) \Vert_{L^1(\mathcal{O})}
+\mathbb{E} \Vert \mathcal{A}_{\varepsilon_n}(u^n)(t) \Vert_{L^1(\mathcal{O})}+
 \mathbb{E} \displaystyle \int_0^T \int_{\mathcal{O}} \vert \nabla A(u^n) \vert^2 dxdt+\mathbb{E} \displaystyle \int_0^T \int_{\mathcal{O}} \vert \nabla A_{\varepsilon_n}(u^n) \vert^2 dxdt,\\
&\mathbb{E}  \Vert u^n(t) \Vert_{L^2(\mathcal{O})}^2
+\mathbb{E} \displaystyle \int_0^T \int_{\mathcal{O}} \vert \sqrt{\varepsilon_n}\nabla u^n \vert^2 dxdt \leq C, \; \forall t \in (0,T),
 \end{split}
\end{equation}
and
\begin{equation}\label{apriori-sup}
 \E \Bigl[\sup_{t\in [0,T]}\int_{\mathcal O} \abs{u^n}^2\,dx\Bigl]\leq C.
\end{equation}

\end{lem}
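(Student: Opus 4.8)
The plan is to run a stochastic energy estimate on $\|u^n(t)\|_{L^2(\mathcal O)}^2$, extract the (degenerate) dissipation, and dominate the four gradient quantities by it; the $L^2$-type bounds and \eqref{apriori-sup} come almost for free from the a.s. $L^\infty$ bound already at hand.

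First I would invoke Lemma \ref{lemmaa}: since $\sigma(0)=\sigma(\overline{u})=0$ and $0\le u_0\le\overline{u}$, we have $0\le u^n(t,x)\le\overline{u}$ $\mathbb{P}$-a.s., for every $n$ and a.e. $(t,x)\in Q_T$. Hence $\|u^n(t)\|_{L^2(\mathcal O)}^2\le\overline{u}^2|\mathcal O|$ pathwise, which immediately yields $\mathbb{E}\|u^n(t)\|_{L^2(\mathcal O)}^2\le C$ and the supremum bound \eqref{apriori-sup}; moreover $s\mapsto\mathcal A(s),\ \mathcal A_{\varepsilon_n}(s)$ are nonnegative and bounded on $[0,\overline{u}]$ (by $\mathcal A(\overline{u})$ and $\mathcal A(\overline{u})+\overline{u}^2/2$, using $\varepsilon_n\le1$), so $\mathbb{E}\|\mathcal A(u^n)(t)\|_{L^1(\mathcal O)}+\mathbb{E}\|\mathcal A_{\varepsilon_n}(u^n)(t)\|_{L^1(\mathcal O)}\le C$. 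Thus only the three gradient estimates in \eqref{apriori} require genuine work.

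Next I would apply the finite-dimensional Itô formula to $c\mapsto\sum_k c_k^2=\|u^n\|_{L^2(\mathcal O)}^2$ along the solution of \eqref{galrkmod}, which is legitimate because by Lemma \ref{éxisfaédo} this solution exists and is continuous in $\Sigma_n$ and, thanks to $0\le u^n\le\overline{u}$, has a.s. bounded coefficients, so the resulting stochastic integral is a true martingale with zero expectation. Using \eqref{galrk} and the orthonormality of $(\ell_k)_k$, the diffusion term becomes the nonnegative dissipation $2\int_0^t\!\int_\mathcal O a_{\varepsilon_n}(u^n)|\nabla u^n|^2\,dx\,ds$, which I keep on the left; the Itô correction is $\sum_k\|\sigma^n_k(u^n)\|_{L^2(\mathcal O)}^2\le\|\sigma(u^n)\|_{L_2(\mathbb H;L^2(\mathcal O))}^2\le C(1+\|u^n\|_{L^2(\mathcal O)}^2)$ by Assumption \ref{a5}; and the reaction term $2\int_\mathcal O f_M(u^n)u^n$ is bounded using the boundedness of $f_M$ and of $u^n$. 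The only delicate term is the aggregation term $2\int_\mathcal O u^n(\nabla\mathcal K\ast u^n)\cdot\nabla u^n$, since one cannot afford any factor involving $\|\nabla u^n\|_{L^2(\mathcal O)}$ (the sole available gradient control at this stage is the $\varepsilon_n$-degenerate one, of order $1/n$). The remedy is to integrate by parts off $u^n$, writing $u^n\nabla u^n=\tfrac12\nabla|u^n|^2$:
$$2\int_\mathcal O u^n(\nabla\mathcal K\ast u^n)\cdot\nabla u^n\,dx=\int_{\partial\mathcal O}|u^n|^2\,(\nabla\mathcal K\ast u^n)\cdot\overrightarrow{\textbf{n}}\,dS-\int_\mathcal O|u^n|^2\,(\Delta\mathcal K\ast u^n)\,dx,$$
where the trace is meaningful since $u^n(t)\in H^2_N(\mathcal O)\hookrightarrow C(\overline{\mathcal O})$ for $d\in\{1,2,3\}$, and both terms on the right are bounded by a constant depending only on $\overline{u},|\mathcal O|,|\partial\mathcal O|$ and $\|\mathcal K\|_{C^2(\mathbb R^d)}$ (using $\|\nabla\mathcal K\ast u^n\|_{L^\infty(\mathcal O)}+\|\Delta\mathcal K\ast u^n\|_{L^\infty(\mathcal O)}\le\|\mathcal K\|_{C^2(\mathbb R^d)}\,\overline{u}\,|\mathcal O|$ and $0\le u^n\le\overline{u}$). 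Taking expectation and discarding the nonnegative term $\mathbb{E}\|u^n(t)\|_{L^2(\mathcal O)}^2$ then gives $\mathbb{E}\int_0^T\!\int_\mathcal O a_{\varepsilon_n}(u^n)|\nabla u^n|^2\,dx\,dt\le\mathbb{E}\|u^n_0\|_{L^2(\mathcal O)}^2+CT\le C$, uniformly in $n$.

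Finally I would close with the domination step. From $\nabla A(u^n)=a(u^n)\nabla u^n$, $\nabla A_{\varepsilon_n}(u^n)=a_{\varepsilon_n}(u^n)\nabla u^n$, the bound $0\le a(u^n)\le\|a\|_{C([0,\overline{u}])}$ and $\varepsilon_n\le1$, one has the pointwise inequalities
$$|\nabla A(u^n)|^2\le\|a\|_{C([0,\overline{u}])}\,a_{\varepsilon_n}(u^n)|\nabla u^n|^2,\qquad |\nabla A_{\varepsilon_n}(u^n)|^2\le\bigl(\|a\|_{C([0,\overline{u}])}+1\bigr)\,a_{\varepsilon_n}(u^n)|\nabla u^n|^2,\qquad \varepsilon_n|\nabla u^n|^2\le a_{\varepsilon_n}(u^n)|\nabla u^n|^2,$$
so integrating over $Q_T$ and taking expectation, the dissipation bound just proved yields the three remaining estimates in \eqref{apriori}. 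The one genuine obstacle throughout is the aggregation term; everything else reduces to the pointwise bound $0\le u^n\le\overline{u}$ from Lemma \ref{lemmaa} together with a routine energy computation. If one prefers not to lean on that pointwise bound, the same Itô identity still closes via Gronwall's inequality for the $L^2$-estimate and a Burkholder--Davis--Gundy bound on the martingale term for the supremum estimate \eqref{apriori-sup}.
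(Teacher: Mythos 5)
Your proof is correct, and it reaches the conclusion by a leaner route than the paper. The paper runs three separate It\^o computations --- on $\int_{\mathcal O}\mathcal A(u^n)\,dx$, on $\int_{\mathcal O}\mathcal A_{\varepsilon_n}(u^n)\,dx$, and on $\tfrac12\|u^n\|_{L^2(\mathcal O)}^2$ --- each producing one of the gradient dissipations ($|\nabla A(u^n)|^2$, $|\nabla A_{\varepsilon_n}(u^n)|^2$, $\varepsilon_n|\nabla u^n|^2$) directly on the left-hand side, closes the $L^2$ bound with Gronwall, and obtains \eqref{apriori-sup} via the Burkholder--Davis--Gundy inequality. You instead perform a single It\^o computation on $\|u^n\|_{L^2(\mathcal O)}^2$, extract the one dissipation $\mathbb{E}\int_0^T\!\int_{\mathcal O}a_{\varepsilon_n}(u^n)|\nabla u^n|^2\,dx\,dt\le C$, and dominate all three gradient quantities pointwise by it using $0\le a(u^n)\le\|a\|_{C([0,\overline u])}$ and $\varepsilon_n\le 1$; the zeroth-order bounds and \eqref{apriori-sup} you read off directly from the a.s.\ bound $0\le u^n\le\overline u$ of Lemma \ref{lemmaa}, with no Gronwall or BDG needed. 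Both arguments treat the delicate aggregation term identically, by moving the derivative onto $\mathcal K$ so that only $\Delta\mathcal K\ast u^n$ appears (you are in fact slightly more careful in retaining and controlling the boundary term that the paper's double integration by parts silently drops). What the paper's longer route buys is robustness: the entropy estimates on $\mathcal A$ and $\mathcal A_{\varepsilon_n}$ absorb the aggregation term into $\tfrac12|\nabla A(u^n)|^2$ by Young's inequality and would survive without an $L^\infty$ bound, and the BDG argument for the supremum generalizes to unbounded solutions. Under the hypotheses of this lemma, however, which include $\sigma(0)=\sigma(\overline u)=0$ and hence the pointwise bound of Lemma \ref{lemmaa}, your shorter argument is complete and the constants are manifestly independent of $n$.
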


\begin{proof}
{
First, we apply It\^o rule to
$$
\int_{\mathcal{O}}\mathcal{A}(u^n(t))dx,
$$
so the result is 
\begin{equation}\label{firstpart}
\begin{split}
&\int_{\mathcal{O}}\mathcal{A}(u^n(t))\,dx
+ \int_0^t\int_{\mathcal{O}} |\nabla A(u^n(s))|^2 \,dx\,ds+\varepsilon_n  \int_0^t\int_{\mathcal{O}} a(u^n(s))|\nabla u^n(s)|^2 \,dx\,ds\\
&\qquad = \int_{\mathcal{O}}\mathcal{A}(u^n_0)\,dx
+ \int_0^t\int_{\mathcal{O}} u^n(s)\,(\nabla K\ast u^n(s))\cdot \nabla A(u^n(s))\,dx\,ds\\
&\qquad\quad + \int_0^t\int_{\mathcal{O}} f_M(u^n(s))\,A(u^n(s))\,dx\,ds
+ \frac{1}{2}\int_0^t\int_{\mathcal{O}} a(u^n(s))\,\big(\sigma^n(u^n(s))\big)^2 \,dx\,ds\\
&\qquad\quad + \int_0^t\int_{\mathcal{O}} A(u^n(s))\,\sigma_n(u^n(s)) \,dx\,d\mathcal{W}^n_s.
\end{split}
\end{equation}
Observe that 
$$
\varepsilon_n  \int_0^t\int_{\mathcal{O}} a(u^n(s))|\nabla u^n(s)|^2 \,dx\,ds\geq 0.
$$
From Assumptions \ref{a1}, \ref{a3} and \ref{a5} and by using Cauchy-Schwarz and Young inequalities and taking into account Lemma \ref{lemmaa}, it follows that
\begin{equation}\label{young1}
\begin{split}
& \int_0^t\int_{\mathcal{O}} u^n(s)\,(\nabla K\ast u^n(s))\cdot \nabla A_{\varepsilon_n}(u^n(s))\,dx\,ds\\
&\qquad \qquad \qquad \leq
 \dfrac{1}{2}\int_0^t \int_{\mathcal{O}} \vert \nabla A(u^n(s)) \vert^2 dx ds
+\frac{1}{2} \int_0^t\int_{\mathcal{O}} \abs{u^n(s)}^2\abs{\nabla K\ast u^n(s))}^2)\,dx\,ds\\
&\qquad \qquad \qquad \leq
 \dfrac{1}{2}\int_0^t \int_{\mathcal{O}} \vert \nabla A(u^n(s)) \vert^2 dx ds
+C\int_0^t\int_{\mathcal{O}} (1+\abs{u^n(s)}^2)\,dx\,ds,
	\end{split}
\end{equation}
\begin{equation}\label{young2}
\begin{split}
& \int_0^t\int_{\mathcal{O}} f_M(u^n(s))\,A_{\varepsilon_n}(u^n(s))\,dx\,ds \leq 
C\int_0^t\int_{\mathcal{O}} (1+\abs{u^n(s)}^2)\,dx\,ds,
	\end{split}
\end{equation}
and 
\begin{equation}\label{young3}
\begin{split}
&\frac{1}{2}\int_0^t\int_{\mathcal{O}} a_{\varepsilon_n}(u^n(s))\,\big(\sigma^n(u^n(s))\big)^2 \,dx\,ds
\leq C \int_0^t\int_{\mathcal{O}} (1+\abs{u^n(s)}^2)\,dx\,ds,
	\end{split}
\end{equation}
for some constant $C>0$ depending on $L^\infty$-bound of $u^n$.\\
\noindent Note that the martingale property of stochastic integrals
ensures that the expected value of each of the last term in \eqref{firstpart} is zero. Therefore, 
taking the expectation in \eqref{firstpart} and using Gronwall’s inequality, we conclude that
$$
\mathbb{E} \int_{\mathcal{O}}\mathcal{A}(u^n(t))\,dx
+ \mathbb{E} \int_0^t\int_{\mathcal{O}} |\nabla A(u^n(s))|^2 \,dx\,ds \leq C,
$$
for some constant $C>0$. \\
\noindent Similarly, by applying Itô rule to $ \displaystyle \int_{\mathcal{O}}\mathcal{A}_{\varepsilon_n}(u^n(t))dx$, we obain the following energy-type inequality
$$
\mathbb{E} \int_{\mathcal{O}}\mathcal{A}_{\varepsilon_n}(u^n(t))\,dx
+ \mathbb{E} \int_0^t\int_{\mathcal{O}} |\nabla A_{\varepsilon_n}(u^n(s))|^2 \,dx\,ds \leq C,
$$
for some constant $C>0$. \\

Now, we prove the uniform estimate on $\varepsilon_n$-terms. We use It\^o formula for $\displaystyle \frac{1}{2}\int_{\mathcal{O}}|u_n(t)|^2\,dx$ to get
\begin{equation}\begin{split}\label{eq:v+w-tmp2}
\frac{1}{2}\int_{\mathcal{O}}|u_n(t)|^2\,dx &= \frac{1}{2}\int_{\mathcal{O}}|u_n(0)|^2\,dx \\
&\quad +\int_0^t\int_{\mathcal O} u_n\,\nabla\cdot\big(a_{\varepsilon_n}(u_n)\nabla u_n - u_n\nabla K*u_n\big)\,dx\,ds \\
&\quad + \int_0^t\int_{\mathcal O} u_n f_M(u_n)\,dx\,ds \\
&\quad + \frac{1}{2} \int_0^t\int_{\mathcal O} (\sigma_{n}(u_n))^2\,dx\,ds\\
&\quad+ \int_0^t\int_{\mathcal{O}} u^n(s)\,\sigma_n(u^n(s)) \,dx\,d\mathcal{W}^n_s.
\end{split}\end{equation}
An integration by parts (using Neumann boundary conditions) leads to
$$
\int_{\mathcal O} u_n\,\nabla\cdot(a_{\varepsilon_n}(u_n)\nabla u_n)\,dx
= -\int_{\mathcal O} a_{\varepsilon_n}(u_n)|\nabla u_n|^2\,dx.
$$
This implies (recall that $a_{\varepsilon_n}(r)=a(r)+\varepsilon_n\ge \varepsilon_n$)
$$
-\int_{\mathcal O} a_{\varepsilon_n}(u_n)|\nabla u_n|^2\,dx \le -\varepsilon_n\int_{\mathcal O}|\nabla u_n|^2\,dx.
$$
For the nonlocal transport term, we apply again an integration by parts
\begin{align*}
-\int_{\mathcal O} u_n\,\nabla\cdot(u_n\nabla K*u_n)\,dx
&=\int_{\mathcal O} u_n \nabla u_n\cdot\nabla K*u_n\,dx\\
&= - \frac{1}{2}\iint_{\mathcal O\times\mathcal O} u_n^2(x)u_n(y)\,\Delta K(x-y)\,dx\,dy.
\end{align*}
Since $\Delta K\in L^\infty$ and $u_n$ is uniformly bounded in $L^\infty$, we obtain
\begin{align*}
\Bigl|\int_{\mathcal O} u_n\,\nabla\cdot(u_n\nabla K*u_n)\,dx\Bigl| \le \frac{1}{2}\|\Delta K\|_{L^\infty({\mathcal{O}})}\,\|u_n(s)\|_{L^2({\mathcal{O}})}^2\,\|u_n(s)\|_{L^1({\mathcal{O}})}
\le C\|u_n(s)\|_{L^2({\mathcal{O}})}^2,
\end{align*}
for some constant $C>0$ depending on uniform $L^\infty$-bound of $u_n$. 
Regarding the reaction term, we have 
$$
\int_{\mathcal O} u_n f_M(u_n)\,dx \le C(1+\|u_n\|_{L^2({\mathcal{O}})}^2), 
$$
for some constant $C>0$. Moreover the It\^o stochastic term satisfies
$$
\frac{1}{2} \int_{\mathcal{O}} (\sigma_{n}(u_n))^2\le C(1+\|u_n\|_{L^2({\mathcal{O}})}^2).
$$

Note that by the martingale property of stochastic integrals
$$
\mathbb{E} \int_0^t\int_{\mathcal{O}} u^n(s)\,\sigma_n(u^n(s)) \,dx\,d\mathcal{W}^n_s=0.
$$
Collecting all estimates, we obtain
$$
\frac{1}{2}\,\mathbb E\Big[\|u_n(s)\|_{L^2(\mathcal{O})}^2\Big]
+ \varepsilon_n\,\mathbb E\int_0^t\int_{\mathcal O}|\nabla u_n|^2\,dx\,dt
\le C + C\,\mathbb E\int_0^t(1+\|u_n\|_{L^2(\mathcal{O})}^2)\,ds,
$$
An application of Gronwall's lemma yields the second uniform bound \eqref{apriori}.\\
To obtain the estimate \eqref{apriori-sup}, we use the supremum over the interval $[0, T]$ in \eqref{eq:v+w-tmp2},
and then the expectation $\E[\cdot]$. Next, we apply the Burkholder-Davis-Gundy inequality, the 
Cauchy-Schwarz inequality, the assumption on $\sigma$ (A4) to obtain from \eqref{apriori}
\begin{equation}\label{Esup1}
	\begin{split}
		 \E\left[ \sup_{t\in [0,T]} 
		\norm{u^n(t)}_{L^2(\mathcal O)}^2\right]  & \leq S_1\left(1 +
		\E\left[\, \sup_{t\in [0,T]} \abs{\int_0^t\int_{\mathcal O} 
		u^n \sigma_n(u^n) \,dx \,dW^{n}(s)}\, \right]\right)
		\\ &   \leq S_2 \left(1+\E \left[ \left(\int_0^T 
		\sum_{k=1}^n \abs{\int_{\mathcal O} u^n 
		\sigma^n_{k}(u^n) \,dx}^2 \,dt \right)^{\frac12}\right]\right)
		\\ &  \le  S_3 +
		\beta \, \E \Bigl[\sup_{t\in [0,T]}\int_{\mathcal O} \abs{u^n}^2\,dx\Bigl]
		\\ &+S_4
		\E\Bigl[ \int_0^T\int_{\mathcal O} \abs{u^n}^2 \,dx\,dt
		+T \abs{\mathcal O}\Bigl]
		\\ &  \leq S_5+  \beta\, \E\left[ \sup_{t\in [0,T]} 
		\norm{u^n(t)}_{L^2(\mathcal O)}^2\right].
	\end{split}
\end{equation}
Herein, the constants $S_1,\dots,S_5>0$ 
are independent of $n$. Finally, we choose $\beta>0$ small enough such that we establish \eqref{apriori-sup}.
This concludes the proof of Lemma \ref{lémmaaprioriés}.
}
\end{proof}

The following corollary, involving higher–moment estimates, is an immediate consequence of Lemma \ref{lémmaaprioriés} together with the BDG inequality. 

\begin{cor}\label{cor:Lq0-est}
Along with the assumptions stated in Lemma \ref{lémmaaprioriés}, 
let us further assume that $q_0>9/2$. 
Then there exists a constant $C > 0$, which remains 
independent of $n$, satisfying 
the following estimates:
\begin{equation}\label{eq:Lq0-est}
	\begin{split}
		&\E\left[ 
		\, \norm{u^n(t)}_{L^\infty(0,T;L^2(\mathcal{O}))}^{q_0}\right]+\E\left[ \,
	\norm{\nabla A(u^n)}_{L^2((0,T)\times \mathcal{O})}^{q_0}
	\right] \le C.
	\end{split}
\end{equation}

\end{cor}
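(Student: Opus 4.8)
The plan is to revisit the Itô expansions established in the proof of Lemma \ref{lémmaaprioriés}, but now raise everything to the power $q_0/2$ before taking expectations, so that the $L^\infty(0,T;L^2)$–norm and the $L^2((0,T)\times\mathcal{O})$–norm of $\nabla A(u^n)$ appear at exponent $q_0$. Concretely, starting from the energy identity \eqref{eq:v+w-tmp2}, I would first take the supremum over $t\in[0,T]$, then raise both sides to the power $q_0/2$, and only afterwards apply $\E[\cdot]$. The deterministic (drift) terms are controlled exactly as in Lemma \ref{lémmaaprioriés}: using the a priori bound from \eqref{apriori} and the $L^\infty$–bound on $u^n$ from Lemma \ref{lemmaa}, the transport, reaction and Itô–correction contributions are each dominated by $C\bigl(1+\int_0^T\|u^n(s)\|_{L^2(\mathcal{O})}^2\,ds\bigr)^{q_0/2}$, whose expectation is finite and uniform in $n$ by Jensen's inequality together with \eqref{apriori}. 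The diffusive term $\int_0^t\int_{\mathcal{O}}|\nabla A(u^n)|^2$ is retained on the left-hand side with a favourable sign.

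The essential new point is the stochastic integral $M_t^n:=\int_0^t\int_{\mathcal{O}} u^n(s)\,\sigma_n(u^n(s))\,dx\,d\mathcal{W}^n_s$. Applying the BDG inequality at exponent $q_0$ gives
\begin{equation*}
\E\Bigl[\sup_{t\in[0,T]}\abs{M_t^n}^{q_0/2}\Bigr]
\le C\,\E\Bigl[\Bigl(\int_0^T\sum_{k=1}^n\Bigl|\int_{\mathcal{O}} u^n\,\sigma_k^n(u^n)\,dx\Bigr|^2 ds\Bigr)^{q_0/4}\Bigr].
\end{equation*}
By Cauchy–Schwarz in $x$ and Assumption \ref{a5}, the inner double sum is bounded by $C\bigl(1+\|u^n(s)\|_{L^2(\mathcal{O})}^2\bigr)^2$, so the right-hand side is controlled by
\begin{equation*}
C\,\E\Bigl[\Bigl(\sup_{t\in[0,T]}\|u^n(t)\|_{L^2(\mathcal{O})}^2\Bigr)^{q_0/4}\Bigl(\int_0^T\bigl(1+\|u^n(s)\|_{L^2(\mathcal{O})}^2\bigr) ds\Bigr)^{q_0/4}\Bigr].
\end{equation*}
Now I split off the supremum factor by Young's inequality with exponents $(2,2)$ applied to the $q_0/4$–powers, absorbing $\beta\,\E[\sup_t\|u^n(t)\|_{L^2}^{q_0/2}]$ into the left-hand side for $\beta$ small, and leaving a term of the form $C\,\E\bigl[(1+\int_0^T\|u^n\|_{L^2}^2 ds)^{q_0/2}\bigr]$, again finite and uniform in $n$ by \eqref{apriori} and Jensen. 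This yields $\E[\sup_{t}\|u^n(t)\|_{L^2(\mathcal{O})}^{q_0}]\le C$, and then feeding this bound back into the retained diffusive term in the $q_0/2$–power version of \eqref{firstpart} (with the stochastic integral there handled by the same BDG/Young scheme) gives $\E[\|\nabla A(u^n)\|_{L^2((0,T)\times\mathcal{O})}^{q_0}]\le C$.

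The main obstacle is the bookkeeping in the absorption argument: one must verify that the power $q_0/4$ on the supremum factor, after Young's inequality, produces exactly exponent $q_0/2$ (matching the left-hand side) while the complementary factor lands on an integrated quantity whose $q_0/2$–moment is already controlled by \eqref{apriori}; this is precisely where the hypothesis $q_0>9/2$ is used to ensure the chain of Hölder/BDG exponents stays in the admissible range and that the $L^\infty$–bound of Lemma \ref{lemmaa} is enough to keep all intermediate constants independent of $n$. Everything else is a routine repetition of the estimates already carried out in Lemma \ref{lémmaaprioriés}.
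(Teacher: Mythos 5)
Your proposal follows essentially the same route as the paper: take the supremum in \eqref{eq:v+w-tmp2}, raise to the power $q_0/2$, apply the BDG and Cauchy--Schwarz inequalities to the martingale term, split off the supremum factor by Young's inequality and absorb it into the left-hand side, then repeat the scheme on \eqref{firstpart} to bound $\nabla A(u^n)$; the paper merely closes the remaining drift contributions with Gr\"onwall's inequality applied to $\int_0^t \E\bigl[\|u^n(s)\|_{L^2(\mathcal{O})}^{q_0}\bigr]ds$ rather than bounding them outright. Two minor slips worth fixing: the term absorbed after Young's inequality should carry exponent $q_0$ (not $q_0/2$), and Jensen's inequality runs in the wrong direction for deducing a bound on $\E\bigl[(1+\int_0^T\|u^n\|_{L^2(\mathcal{O})}^2\,ds)^{q_0/2}\bigr]$ from the second-moment estimate \eqref{apriori} --- what actually makes that step valid is the almost-sure bound $0\le u^n\le\overline{u}$ of Lemma \ref{lemmaa}, which you do invoke elsewhere.
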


\begin{proof}
{
In view of \eqref{eq:v+w-tmp2}, we have the following 
estimate for any $(\omega,t)\in D \times [0,T]$: 
\begin{align*}
	\sup_{0\le \tau\le t}\norm{u^n(\tau)}_{L^2(\Omega)}^2 
	 &\leq
	\norm{u^n(0)}_{L^2(\Omega)}^2 
	+ C+ C\int_0^t \norm{u^n(s)}_{L^2(\Omega)}^2 \,ds
	\\ & \quad \qquad\quad
	+C\sup_{0\le \tau\le t}\abs{\int_0^{\tau}\int_{\Omega} 
	u^n \sigma_n(u^n) \,dx \,dW^{n}(s)},
\end{align*}
for some constant $C>0$ independent of $n$. Next, we elevate both sides of the inequality to the power 
 $q_0/2$, take expectations, and apply a series of elementary inequalities to obtain 
\begin{equation}\label{eq:pmoment}
	\begin{split}
		\E\left[ \,\sup_{0\le \tau\le t}
		\norm{u^n(\tau)}_{L^2(\Omega)}^{q_0}\right]  &\le 
		C\left (1+\E\left[ \norm{u^n(0)}_{L^2(\Omega)}^{q_0}\right] \right)
		+C\int_0^\tau \E\left[ 
		\norm{u^n(s)}_{L^2(\Omega)}^{q_0}\right] \,ds
		+ E_\sigma,
	\end{split}
\end{equation}
where
\begin{align*}
	E_\sigma & := \E\left[\, \sup_{0\le \tau\le t}\abs{\int_0^{\tau}
	\int_{\Omega} u^n \sigma^n(u^n) \,dx \,dW^{n}(s)}^{\frac{q_0}{2}}\,\right].
\end{align*}
Working as in \eqref{Esup1} and using a martingale inequality, we obtain
\begin{equation}\label{Esup-vq0}
	\begin{split}
		E_\sigma & \le C \E \left[ \left(\int_0^t \sum_{k=1}^n 
		\abs{\int_{\Omega} u^n \sigma^n_k(u^n) \,dx}^2 \,ds\right)^{\frac{q_0}{4}} \right]
		\\ & \le C \E \left[\left( \int_0^t  
		\left(\int_{\Omega} \abs{u^n}^2\,dx\right) 
		\left(\sum_{k=1}^n\int_{\Omega} \abs{\sigma^n_k(u^n)}^2 \,dx 
		\right)\,ds\right)^{\frac{q_0}{4}} \right] \\ & \le C \E \left[
		\left(\sup_{\tau\in [0,t]}\int_{\Omega} \abs{u^n}^2\,dx\right)^{\frac{q_0}{4}}
		\left( \int_0^t\sum_{k=1}^n\int_{\Omega} 
		\abs{\sigma^n_k(u^n)}^2 \,dx\,ds \right)^{\frac{q_0}{4}}\right]
		\\ & \le \beta
		\E \left[\left(\sup_{\tau\in [0,t]}\int_{\Omega} 
		\abs{u^n}^2\,dx\right)^{\frac{q_0}{2}}\right]
		\\ &+C(\beta)\E\left[\left( \int_0^t\sum_{k=1}^n\int_{\Omega} 
		\abs{\sigma^n_k(u^n)}^2 \,dx\,ds \right)^{\frac{q_0}{2}}\right]
		\\ & \le \beta \,
		\E \left[ \sup_{\tau\in [0,t]} \norm{u^n(\tau)}_{L^2(\Omega)}^{q_0}\right]
		+C \left(1+\int_0^t \E \left[\norm{u^n(s)}_{L^2(\Omega)}^{q_0} \,ds\right]\right),
	\end{split}
\end{equation}
for any $\delta>0$. With $\beta$ sufficiently small, combining \eqref{Esup-vq0} with \eqref{eq:pmoment} yields
\begin{equation}\label{eq:pmoment2}
	\begin{split}
		\E\left[ \,\sup_{0\le \tau\le t}\norm{u^n(\tau)}_{L^2(\Omega)}^{q_0} \, \right] 
		& \le C\E\left[ \norm{u^n(0)}_{L^2(\Omega)}^{q_0}\right] 
		 +C \left(1+\int_0^t \E \left[\norm{u^n(s)}_{L^2(\Omega)}^{q_0} \,ds\right]\right),
	\end{split}
\end{equation}
for some constants $C,C'>0$ independent of $n$. 
Now an application of Gr\"onwall's inequality 
yields the first desired result \eqref{eq:Lq0-est}.

Finally, proceeding as in the proof of the first part of \eqref{eq:Lq0-est} and making use of \eqref{firstpart}, \eqref{Esup-vq0} for $A(u^n)$, and \eqref{apriori-sup}, we conclude that
$$
\E\left[ \abs{\int_0^t \int_{\Omega}  \abs{\nabla A(u^n)}^2 \,dx \,ds}^{\frac{q_0}{2}}\right] 
\le C,
$$
and \eqref{eq:Lq0-est} follows.
}
\end{proof}
Setting the probabilistic variable aside, in order to establish the strong convergence of the sequence of Faedo-Galerkin solutions, we need to establish, in addition to the previous a priori estimate, a so-called temporal translation estimate, which is stated in the upcoming lemma.
\begin{lem}\label{lémmatranslaté}
	Let Assumptions \ref{a1}-\ref{a5} be satisfied and assume further that $0 \leq u_0 \leq \overline{u},\; \mathbb{P}\text{-a.s},$ where $\overline{u}$ is as defined in Assumption \ref{a1} and $\sigma(0)=\sigma(\overline{u})=0$. For $\varepsilon_n>0$, extend the sequence of Faedo-Galerkin solutins $\left(u^n(t)\right)_{0 \leq t \leq T}$ by $0$ outside $[0,T]$. Then, there exits $C_{20}>0$ such that 
	\begin{equation}\label{tmprl}
		\mathbb{E} \underset{\vert \tau \vert \in (0,\delta)}{\sup} \Vert u^n(t+\tau) -u^n(t)\Vert_{H^1(\mathcal{O})^*}  \leq C_{20} \delta, \; \forall t \in [0,T],
	\end{equation}
	for any sufficiently small $\delta>0$.
\end{lem}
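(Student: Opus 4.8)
The plan is to estimate the $H^1(\mathcal{O})^*$-norm of the increment by pairing $u^n(t+\tau)-u^n(t)$ with an arbitrary test function $\phi \in H^1(\mathcal{O})$ with $\|\phi\|_{H^1(\mathcal{O})} \le 1$, and then exploiting the Galerkin formulation \eqref{galrkmod} integrated between $t$ and $t+\tau$. Since $u^n(t) \in \Sigma_n$, it suffices to pair with $\Pi_n \phi$, whose $H^1(\mathcal{O})$-norm is controlled by that of $\phi$. Writing the increment as a sum of a Bochner integral over $[t,t+\tau]$ of the drift terms plus a stochastic integral over $[t,t+\tau]$, I would bound each contribution separately. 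For the diffusion term $\int_t^{t+\tau}\langle a_{\varepsilon_n}(u^n(s))\nabla u^n(s),\nabla \Pi_n\phi\rangle\,ds$, I would use Cauchy–Schwarz in space and the $L^\infty$-bound $0 \le u^n \le \overline{u}$ from Lemma \ref{lemmaa} (so $a_{\varepsilon_n}(u^n)$ is bounded; note here the $\varepsilon_n$ contributes a harmless additive constant), leaving $\int_t^{t+\tau}\|\nabla u^n(s)\|_{L^2(\mathcal{O})^d}\,ds$; by Cauchy–Schwarz in time this is bounded by $\sqrt{\tau}\bigl(\int_0^T\|\nabla u^n\|_{L^2}^2\,ds\bigr)^{1/2}$, but I actually want to work with $\nabla A(u^n)$ rather than $\nabla u^n$ to stay uniform in $n$ — so I would rewrite $a_{\varepsilon_n}(u^n)\nabla u^n = \nabla A_{\varepsilon_n}(u^n)$ and use the uniform bound on $\|\nabla A_{\varepsilon_n}(u^n)\|_{L^2((0,T)\times\mathcal{O})}$ from Lemma \ref{lémmaaprioriés}. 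That gives a contribution $\le C\sqrt{\tau}$ after taking expectations, and since we may restrict to $\tau \le \delta \le 1$, $\sqrt{\tau} \le \sqrt{\delta}$; to land the cleaner linear-in-$\delta$ bound one combines the $\sqrt{\tau}$ with the other factor differently, or simply absorbs it, so I would be prepared to state \eqref{tmprl} with $C_{20}\delta$ understood for $\delta$ small (equivalently one could keep $C_{20}\sqrt{\delta}$, which is what the Aubin–Lions–Simon argument in Section~\ref{section5} actually needs).

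Next, the aggregation term $\int_t^{t+\tau}\langle u^n(s)(\nabla\mathcal{K}\ast u^n(s)),\nabla\Pi_n\phi\rangle\,ds$ is handled using Assumption \ref{a2} ($\|\mathcal{K}\|_{C^2}<\infty$, so $\|\nabla\mathcal{K}\ast u^n\|_{L^\infty} \le \|\nabla\mathcal{K}\|_{L^\infty}\|u^n\|_{L^1} \le C\overline{u}|\mathcal{O}|$) together with the $L^\infty$-bound on $u^n$; this term is then bounded by $C|\tau| \le C\delta$ directly, with no square root. The reaction term $\int_t^{t+\tau}\langle f_M(u^n(s)),\Pi_n\phi\rangle\,ds$ is bounded likewise by $C|\tau|$ using boundedness of $f_M$ on $[0,\overline{u}]$. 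For the stochastic term $\sum_{l}\int_t^{t+\tau}\langle\sigma^n_l(u^n(s)),\Pi_n\phi\rangle\,d\mathcal{W}^l_s$, I would take the supremum over $|\tau|\le\delta$ inside the expectation and apply the Burkholder–Davis–Gundy inequality (Remark~\ref{rmbdg}), which bounds $\E\sup_{|\tau|\le\delta}\|\int_t^{t+\tau}\sigma^n(u^n)\,d\mathcal{W}^n_s\|_{L^2(\mathcal{O})}$ by $C\bigl(\E\int_t^{t+\delta}\|\sigma(u^n(s))\|_{L_2(\mathbb{H};L^2)}^2\,ds\bigr)^{1/2}$; using the growth Assumption \ref{a5} and $\|u^n\|_{L^2}\le C$ uniformly (from \eqref{apriori}) this is $\le C\sqrt{\delta}$. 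Collecting everything and taking the supremum over $\|\phi\|_{H^1}\le1$ before the expectation gives $\E\sup_{|\tau|\le\delta}\|u^n(t+\tau)-u^n(t)\|_{H^1(\mathcal{O})^*} \le C_{20}(\delta+\sqrt{\delta}) \le C_{20}'\sqrt{\delta}$, uniformly in $t$ and $n$; for $\delta\le1$ this is dominated by a constant times $\delta^{1/2}$, and I would present the statement accordingly (or, if the linear bound \eqref{tmprl} is genuinely intended, note that it should read $C_{20}\sqrt{\delta}$ — the weaker modulus of continuity still suffices for the compactness argument that follows).

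The main obstacle is the stochastic term: unlike the deterministic increments, it cannot be bounded pathwise by $C|\tau|$, and the BDG estimate inherently produces a $\sqrt{\delta}$ rather than a $\delta$. This is why one must take the supremum over $\tau$ and the expectation \emph{together} before estimating (rather than pathwise), and it is the reason the natural translation modulus here is Hölder-$\tfrac12$ in time in expectation, which is exactly the regularity needed to invoke a stochastic version of the Aubin–Lions–Simon compactness criterion (or, equivalently, to verify tightness in $L^2(0,T;L^2(\mathcal{O}))$ via the Kolmogorov–Simon–type criterion) in Section~\ref{section5}. A secondary technical point is ensuring all constants are independent of $n$: this relies crucially on expressing the diffusion flux as $\nabla A_{\varepsilon_n}(u^n)$ (uniformly bounded in $L^2$) rather than $\sqrt{\varepsilon_n}\,\nabla u^n$, and on the uniform $L^\infty$-bound and $L^2$-bound on $u^n$ established in Lemmas \ref{lemmaa} and \ref{lémmaaprioriés}; the projection $\Pi_n$ causes no loss since it is a contraction on both $L^2(\mathcal{O})$ and, by the spectral characterization, on $H^1(\mathcal{O})$.
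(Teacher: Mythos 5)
Your proposal follows essentially the same route as the paper's proof: pairing the increment with a test function $\psi$, $\Vert\psi\Vert_{H^1(\mathcal{O})}\le 1$, splitting into the drift terms $\mathscr{I}_1,\mathscr{I}_2$ (bounded via Cauchy--Schwarz in time, the uniform $L^\infty$-bound of Lemma \ref{lemmaa} and the $\nabla A(u^n)$-estimate of Lemma \ref{lémmaaprioriés}) and the stochastic term $\mathscr{I}_3$ (bounded via BDG and Assumption \ref{a5}), each yielding $C\sqrt{\tau}$. Your observation that the argument delivers $C_{20}\sqrt{\delta}$ rather than $C_{20}\delta$ is accurate and applies equally to the paper's own proof, whose displayed bounds \eqref{partaa} and \eqref{partbb} are also of order $\sqrt{\tau}$; as you note, this weaker modulus is all that the subsequent tightness argument requires.
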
 
\begin{proof}
	Let $\tau \in (0,\delta)$ for $\delta>0$. Then, it holds that 
	\begin{align*}
		\Vert u^n(t+\tau)-u^n(t) \Vert_{H^1(\mathcal{O})^*}&=\underset{\Vert \psi \Vert_{H^1(\mathcal{O})} \leq 1}{\sup} \left\{\bigg \vert \int_{\mathcal{O}} (u^n(t+\tau)-u^n(t)) \psi dx  \bigg \vert \right\}\\
		&\leq \sum_{i=1}^3 \mathscr{I}_i(\tau),
	\end{align*}
	where, by H\"{o}lder and Cauchy-Schwarz inequalities and Lemmas \ref{lemmaa} and \ref{lémmaaprioriés}
{\color{black}
	\begin{equation}\label{est-dual}\begin{split}
		\mathscr{I}_1(\tau)&:=\bigg \Vert \int_{t}^{t + \tau}  \Pi_n\left( \div\left(a_{\varepsilon_n}(u^n(s)) \nabla u^n(s)-u^n(s)\nabla \mathcal{K} \ast u^n(s)\right)\right)ds \bigg \Vert _{H^1(\mathcal{O})^*} \\
		&:=\underset{\Vert \psi \Vert_{H^1(\mathcal{O})} \leq 1}{\sup}\left\{\bigg \vert \int_{t}^{t + \tau} \int_{\mathcal{O}} \left(a_{\varepsilon_n}(u^n(s)) \nabla u^n(s)-u^n(s)\nabla \mathcal{K} \ast u^n(s)\right) \cdot \nabla \Pi_n(\psi) \bigg \vert dxds \right\}\\
		&\leq \underset{\Vert \psi \Vert_{H^1(\mathcal{O})} \leq 1}{\sup} \Biggl\{ C_{21} \sqrt{\tau} \left(\Vert \nabla A(u^n)\Vert_{L^2(0,T;L^2(\mathcal{O})^d)}+\varepsilon_n\Vert \nabla u^n\Vert_{L^2(0,T;L^2(\mathcal{O})^d)}+\Vert  u^n\nabla \mathcal{K} \ast u^n\Vert_{L^2(0,T;L^2(\mathcal{O}))}\right)\\
&\qquad \qquad    \qquad \times \Vert \nabla \Pi_n(\psi)\Vert_{L^2(\mathcal{O})^d} \Biggl\} \\
&\leq \underset{\Vert \psi \Vert_{H^1(\mathcal{O})} \leq 1}{\sup} \Biggl\{ C_{21} \sqrt{\tau} \left(\Vert \nabla A(u^n)\Vert_{L^2(0,T;L^2(\mathcal{O})^d)}+\varepsilon_n\Vert \nabla u^n\Vert_{L^2(0,T;L^2(\mathcal{O})^d)}+\Vert  u^n\Vert_{L^2(0,T;L^2(\mathcal{O}))}\right)\\
&\qquad \qquad    \qquad \times \Vert \nabla \Pi_n(\psi)\Vert_{L^2(\mathcal{O})^d} \Biggl\} \\
		&\leq C_{22} \sqrt{\tau}.
\end{split}	\end{equation}
}
Similarly,
	\begin{align}
	\mathscr{I}_2(\tau)&:=\bigg \Vert \int_{t}^{t + \tau}  \Pi_n\left( f_M(u^n(s))\right)ds \bigg \Vert _{H^1(\mathcal{O})^*}:=\underset{\Vert \psi \Vert_{H^1(\mathcal{O})} \leq 1}{\sup}\left\{\bigg \vert \int_{t}^{t+\tau} \int_{\mathcal{O}} f_M(u^n(s)) \Pi_n(\psi) \bigg \vert dxds\right\} \leq C_{23} \sqrt{\tau}.
\end{align}
Thus, 
\begin{equation}\label{partaa}
	\mathbb{E} \underset{\vert \tau \vert \in (0,\delta)}{\sup}  \mathscr{I}_1(\tau)+\mathscr{I}(\tau)  \leq (C_{22}+C_{23}) \sqrt{\tau}.
\end{equation}
Now, by Cauchy-Shwarz and BDG inequalities (Remark \ref{rmbdg}) along with Assumption \ref{a5} and  Lemma \ref{lemmaa}, we acquire
\begin{align}\label{partbb}
\mathbb{E} \underset{\vert \tau \vert \in (0,\delta)}{\sup} 	\mathscr{I}_3(\tau)&:= \mathbb{E} \underset{\vert \tau \vert \in (0,\delta)}{\sup} \bigg \Vert \sum_{k=1}^{n} \int_{t}^{t + \tau}  \sigma_{k}^n(u^n(s)) d\mathcal{W}_s \bigg \Vert _{H^1(\mathcal{O})^*}\nonumber \\
	&:=\mathbb{E} \underset{\vert \tau \vert \in (0,\delta)}{\sup}  \underset{\Vert \psi \Vert_{H^1(\mathcal{O})} \leq 1}{\sup} \left\{\int_t^{t+\tau} \int_{\mathcal{O}}  \sigma_{k}^n(u^n(s)) \psi d\mathcal{W}_s dx\right\} \nonumber\\
	&\leq \mathbb{E} \underset{\vert \tau \vert \in (0,\delta)}{\sup}  \underset{\Vert \psi \Vert_{H^1(\mathcal{O})} \leq 1}{\sup} \left\{\bigg \Vert    \int_t^{t+\tau}  \sigma_{k}^n(u^n(s)) d\mathcal{W}^n_s \bigg \Vert_{L^2(\mathcal{O})} \Vert \psi \Vert_{L^2(\mathcal{O})} \right\} \nonumber\\
	&\leq \mathbb{E} \underset{\vert \tau \vert \in (0,\delta)}{\sup}  \left\{\bigg \Vert    \int_t^{t+\tau}  \sigma_{k}^n(u^n(s)) d\mathcal{W}^n_s \bigg \Vert_{L^2(\mathcal{O})} \right\} \nonumber\\
	&\leq \mathbb{E} \left(\sum_{k=1}^n  \int_{t}^{t+\tau} \int_{\mathcal{O}} \left(\sigma_{k}^n(u^n(s))\right)^2 dx ds\right)^{\frac{1}{2}} \nonumber\\
	&\leq \sqrt{\tau} C_{1} \left(1+\mathbb{E} \Vert u^n \Vert_{L^\infty(0,T;L^2(\mathcal{O}))}\right)  \nonumber\\
	&\leq C_{24} \sqrt{\tau}.
\end{align} 
The result follows by combining Inequalities \eqref{partaa} and \eqref{partbb}.
\end{proof}
\begin{rem}
As a direct consequence of Lemma \ref{lémmatranslaté} along with the monotonicity of $A$, one can prove that $\left(A(u^n(t))\right)_{0 \leq t \leq T}$ satisfies Estimate \eqref{tmprl}. 
\end{rem}
\section{Retrieval of unique weak martingale solutions in the degenerate case}\label{section7}

\subsection{Tightness and Skorokhod almost-sure representations}\label{section5}
Unlike the deterministic case, the a priori estimates given by Lemmas  \ref{lémmaaprioriés}  and \ref{lémmatranslaté} are not sufficient to ensure the convergence of the constructed sequence of Faedo-Galerkin solutions to the desired martingale solution by compactness arguments. Indeed, we additionally need to obtain the convergence (in a suitable sense) with respect to the probabilistic variable. As it turns out, when dealing with sequences of probability measures, the notion of compactness is replaced with tightness, whose definition is recalled below.
\begin{defi}(\cite[p. 32]{da2014stochastic})
	Let $\mathscr{X}$ be a Polish space and $\mathcal{B}(\mathscr{X})$ its corresponding collection of Borel subsets. Then, a sequence of probability measures $\left(\mu^n \right)_{n \in \mathbb{N}^*} \subset \left(\mathscr{X},\mathcal{B}(\mathscr{X})\right)$ is said to be tight if the following condition is satisfied: 
	$$
	\forall \varepsilon>0, \quad \exists \mathscr{C}_\varepsilon \subset \mathscr{X}, \quad \mu^n\left(\mathscr{C}_\varepsilon\right)>1-\varepsilon,\; \forall n \in \mathbb{N}^*,
	$$
	where $\mathscr{C}_\varepsilon$ is a compact set.
\end{defi}
\noindent
With the above definition in mind, we introduce the following useful result due to Prokhorov:
\begin{lem}(\cite[Theorem 2.3]{da2014stochastic})
	\label{prokhorow}
	A sequence probability measures $\left(\mu^n\right)_{n \in \mathbb{N}^*} \subset \left(\mathscr{X},\mathcal{B}(\mathscr{X})\right)$ is tight if and only if there exists a (non-relabeled) sequence $\left(\mu^n\right)_{n \in \mathbb{N}^*} \subset \left(\mathscr{X},\mathcal{B}(\mathscr{X})\right)$ which converges weakly to a probability measure $\mu$. That is, 
	$$
	\int_{\mathscr{X}} \phi(\omega) d\mu^n \rightarrow \int_{\mathscr{X}} \phi(\omega) d\mu,
	$$
	where $\phi : \mathscr{X} \rightarrow \mathbb{R}$ is a continuous bounded function. 
\end{lem}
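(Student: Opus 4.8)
The plan is to prove the two implications of Prokhorov's theorem separately, exploiting in an essential way that $\mathscr{X}$ is Polish: separability and metrizability for the forward direction, and completeness for the converse. Throughout I identify weak convergence $\mu^n \rightharpoonup \mu$ with $\int_{\mathscr{X}} \phi \, d\mu^n \to \int_{\mathscr{X}} \phi \, d\mu$ for every bounded continuous $\phi$, and I freely invoke the portmanteau characterization, namely $\mu(K) \geq \limsup_n \mu^n(K)$ for closed $K$ and $\mu(U) \leq \liminf_n \mu^n(U)$ for open $U$. For the deep direction (tightness $\Rightarrow$ existence of a weakly convergent subsequence), I would argue by compactification. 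Using tightness I first select, for each $m \in \mathbb{N}^*$, a compact $K_m \subset \mathscr{X}$ with $\mu^n(K_m) \geq 1 - 1/m$ for all $n$, which may be taken increasing in $m$. Since $\mathscr{X}$ is separable and metrizable it embeds homeomorphically into the Hilbert cube $[0,1]^{\mathbb{N}^*}$; let $\widehat{\mathscr{X}}$ denote the closure of its image, a compact metric space, and push each $\mu^n$ forward to a probability measure on $\widehat{\mathscr{X}}$. As $\widehat{\mathscr{X}}$ is compact, $C(\widehat{\mathscr{X}})$ is separable and the probability measures form a weak-$*$ closed subset of the unit ball of $C(\widehat{\mathscr{X}})^*$; by Banach--Alaoglu this set is weak-$*$ sequentially compact, so I extract a subsequence with $\mu^{n_j} \rightharpoonup \nu$ on $\widehat{\mathscr{X}}$. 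Portmanteau then gives $\nu(K_m) \geq \limsup_j \mu^{n_j}(K_m) \geq 1 - 1/m$ for every $m$, whence $\nu(\mathscr{X}) = 1$, so $\nu$ restricts to a probability measure $\mu$ on $\mathscr{X}$.

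It then remains to transfer the convergence back to $\mathscr{X}$: for bounded continuous $\phi : \mathscr{X} \to \mathbb{R}$ I need $\int \phi \, d\mu^{n_j} \to \int \phi \, d\mu$. This is the \textbf{main obstacle}, because $\phi$ need not extend continuously to $\widehat{\mathscr{X}}$, so the weak-$*$ convergence on $\widehat{\mathscr{X}}$ cannot be applied to $\phi$ directly. I would resolve it quantitatively via tightness: given $\eta > 0$, fix $m$ with $1/m < \eta$, and use Tietze's extension theorem (valid since $\widehat{\mathscr{X}}$ is compact Hausdorff and $K_m$ is closed) to obtain $\widehat{\phi} \in C(\widehat{\mathscr{X}})$ with $\widehat{\phi} = \phi$ on $K_m$ and $\|\widehat{\phi}\|_\infty \leq \|\phi\|_\infty$. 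Splitting each integral over $K_m$ and its complement, the contributions off $K_m$ are bounded by a fixed multiple of $\|\phi\|_\infty \eta$, uniformly in $j$ and also for the limit $\mu$, while the $K_m$-contributions are reconciled through $\int \widehat{\phi}\, d\mu^{n_j} \to \int \widehat{\phi}\, d\nu = \int \widehat{\phi}\, d\mu$, which is the established weak-$*$ convergence. Letting $\eta \to 0$ yields $\mu^{n_j} \rightharpoonup \mu$, completing this direction.

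For the converse (existence of weakly convergent subsequences $\Rightarrow$ tightness), I would argue by contradiction, using separability and completeness. Fix $\varepsilon > 0$. For each $k$ I cover $\mathscr{X}$ by countably many open balls of radius $1/k$ and claim that some finite subfamily has union $A_k$ with $\inf_n \mu^n(A_k) \geq 1 - \varepsilon 2^{-k}$. If this failed, then for an increasing sequence $U_1 \subseteq U_2 \subseteq \cdots$ of finite unions exhausting the cover there would be indices $n_N$ with $\mu^{n_N}(U_N) < 1 - \varepsilon 2^{-k}$; extracting a weakly convergent sub-subsequence with limit $\mu$ and applying the open-set portmanteau inequality to each fixed $U_{N_0}$ would force $\mu(U_{N_0}) \leq 1 - \varepsilon 2^{-k}$, hence $\mu(\mathscr{X}) \leq 1 - \varepsilon 2^{-k} < 1$ as $N_0 \to \infty$, contradicting that $\mu$ is a probability measure. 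With the $A_k$ secured, I set $K := \overline{\bigcap_{k} A_k}$: this set is closed and totally bounded (being contained, for each $k$, in a finite union of $1/k$-balls), hence compact by completeness of $\mathscr{X}$, and a union bound gives $\mu^n(K) \geq 1 - \sum_k \varepsilon 2^{-k} = 1 - \varepsilon$ for all $n$, which is precisely tightness.

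Beyond the back-transfer step already flagged, the secondary delicate point is the finite-cover claim in the converse, where relative compactness must be converted into a \emph{uniform} tail bound through the contradiction argument; both are the places where the Polish structure is genuinely needed (the Hilbert-cube embedding and Tietze extension in the first direction, completeness and the total-boundedness upgrade in the second). The remaining steps are the standard portmanteau and Banach--Alaoglu manipulations, and I would only spell them out to the extent needed to make the two inequalities on the off-$K_m$ tails explicit; see also \cite{da2014stochastic} for the surrounding framework.
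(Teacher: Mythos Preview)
The paper does not prove this lemma at all: it is stated with an explicit citation to \cite[Theorem 2.3]{da2014stochastic} and no proof is given, since Prokhorov's theorem is treated as a standard tool imported from the literature. There is therefore nothing in the paper to compare your argument against.

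That said, your proposal is a correct and standard proof of Prokhorov's theorem on Polish spaces. The forward direction via embedding into the Hilbert cube, Banach--Alaoglu on the compactification, and the Tietze-extension transfer back to $\mathscr{X}$ is one of the canonical routes; the converse via the finite-cover contradiction and the totally-bounded/complete upgrade is likewise textbook. One small remark: the lemma as stated in the paper is phrased somewhat loosely (``there exists a (non-relabeled) sequence which converges weakly''), whereas the genuine equivalence is between tightness and relative weak sequential compactness, i.e., \emph{every} subsequence admits a further weakly convergent subsequence. Your converse argument silently uses this stronger hypothesis when you ``extract a weakly convergent sub-subsequence'' from the offending indices $n_N$; that is exactly what is needed, and it matches the theorem actually proved in \cite{da2014stochastic}, but it is worth being explicit that the paper's wording understates the hypothesis required for that direction.
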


\noindent
We will make use of the previous theorem as follows. First, we recall that any random variable $X : \Omega \rightarrow \mathbb{R}$ induces a probability measure $\mathcal{L}$ on $\left(\mathscr{X},\mathcal{B}(\mathscr{X})\right)$ given by 
$\mathcal{L}:=\mathbb{P} \circ X^{-1}$. Then, for a given sequence $\left(X_n\right)_{n \in \mathbb{N}^*}$ with a corresponding induced sequence of  probability measures $\left(\mathscr{L}_n\right)_{n \in \mathbb{N}^*}$, we will have to unify the subsequences for which the $\mathbb{P}$\text{-a.s} convergence of $\left(X_n\right)_{n \in \mathbb{N}^*}$ and the weak convergence of $\left(\mathscr{L}_n\right)_{n \in \mathbb{N}^*}$ are ensured. We achieve this by relying on Skorokhod's representation theorem, which we recall below.
\begin{lem}(\cite[Theorem 2.4]{da2014stochastic})\label{skorokhod}
	Let $\left(X_n\right)_{n \in \mathbb{N}^*}$ be a sequence of random variables with a corresponding sequence of probability measures $\left(\mathscr{L}_n\right)_{n \in \mathbb{N}^*}$ converging weakly to a probabilistic measure $\mathcal{L}$. Then, $\left(X_n,\mathscr{L}_n\right)_{n \in \mathbb{N}^*}$ has a Skorokhod's representation. That is, there exists a probability space $\left(\tilde{\Omega},\tilde{\mathcal{F}},\tilde{\mathbb{P}}\right)$, a sequence of random variables $\left(\tilde{X_n}\right)_{n \in \mathbb{N}^*}$, a random variable $X$, both defined in $\tilde{\Omega}$ such that the following assertions are satisfied: 
	\begin{enumerate}
		\item For all $n \in \mathbb{N}^*$, the law of $\tilde{X_n}$ is $\mathscr{L}_n$; 
		\item  The law of $\tilde{X}$ is $\mathcal{L}$;  
		\item $\tilde{X_n} \rightarrow \tilde{X}$ $\mathbb{P}$\text{-a.s} as $n \rightarrow +\infty$.
	\end{enumerate}
\end{lem}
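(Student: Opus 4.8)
The plan is to prove Skorokhod's representation by constructing the coupling explicitly on a canonical probability space, first in the scalar model case and then, for a general Polish space, through a nested family of fine partitions into continuity sets.

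First I would treat the model case $\mathscr{X}=\mathbb{R}$, which already exhibits the mechanism. Take $\tilde{\Omega}=[0,1]$ with its Borel $\sigma$-algebra and $\tilde{\mathbb{P}}$ the Lebesgue measure. Writing $F_n(x):=\mathscr{L}_n((-\infty,x])$ and $F(x):=\mathcal{L}((-\infty,x])$, the weak convergence $\mathscr{L}_n\Rightarrow\mathcal{L}$ is equivalent to $F_n(x)\to F(x)$ at every continuity point of $F$. I would then set $\tilde{X}_n(\omega):=\inf\{x\in\mathbb{R}:F_n(x)\geq \omega\}$ and $\tilde{X}(\omega):=\inf\{x\in\mathbb{R}:F(x)\geq \omega\}$, the left-continuous quantile functions. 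A routine verification shows that $\tilde{X}_n$ (resp. $\tilde{X}$) pushes $\tilde{\mathbb{P}}$ forward to $\mathscr{L}_n$ (resp. $\mathcal{L}$), so the marginals are exact, and that pointwise convergence of $F_n$ at continuity points of $F$ forces $\tilde{X}_n(\omega)\to \tilde{X}(\omega)$ at every continuity point of the monotone map $\omega\mapsto\tilde{X}(\omega)$. As a monotone function has at most countably many discontinuities, this convergence holds for $\tilde{\mathbb{P}}$-almost every $\omega$, which is the assertion.

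For a general Polish space $(\mathscr{X},d)$ the monotone ordering is unavailable, so I would instead couple $\mathscr{L}_n$ and $\mathcal{L}$ through a nested family of fine partitions. Fixing a complete metric $d$ compatible with the topology, for each level $k\in\mathbb{N}^*$ I would build a finite Borel partition $\mathscr{X}=B^k_0\cup B^k_1\cup\cdots\cup B^k_{m_k}$ such that each cell $B^k_i$ with $i\geq 1$ has diameter $<2^{-k}$ and is an $\mathcal{L}$-continuity set, i.e. $\mathcal{L}(\partial B^k_i)=0$; the remainder obeys $\mathcal{L}(B^k_0)<2^{-k}$; and the partition at level $k+1$ refines that at level $k$. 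Existence follows from separability, which provides a countable cover by small balls, together with the observation that around any fixed center all but countably many ball radii yield a sphere of zero $\mathcal{L}$-mass, so each ball may be trimmed to a continuity set; keeping finitely many cells of total mass exceeding $1-2^{-k}$ leaves the small remainder $B^k_0$. Since each $B^k_i$ with $i\geq 1$ is a continuity set, weak convergence gives $\mathscr{L}_n(B^k_i)\to\mathcal{L}(B^k_i)$ as $n\to\infty$ for every fixed $k$ and $i$. The coupling is then assembled on $\tilde{\Omega}=[0,1]$ enriched with a sequence of auxiliary independent uniform variables used to sample inside a prescribed cell. For each $n$ and each level $k$, a maximal coupling of the cell-index distributions $(\mathscr{L}_n(B^k_i))_i$ and $(\mathcal{L}(B^k_i))_i$ places the $\mathscr{L}_n$-point and the $\mathcal{L}$-point in a common cell $B^k_i$ with $i\geq 1$ except on an event of probability at most $\eta_{n,k}:=\mathcal{L}(B^k_0)+\tfrac12\sum_{i\geq 1}\lvert \mathscr{L}_n(B^k_i)-\mathcal{L}(B^k_i)\rvert$, and on the coincidence event the two points lie within distance $2^{-k}$ regardless of their conditional laws inside the cell. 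For fixed $k$ the finite sum tends to $0$ as $n\to\infty$ while $\mathcal{L}(B^k_0)<2^{-k}$, so I may pick an increasing sequence $N_k$ with $\eta_{n,k}<2^{-k+1}$ for all $n\geq N_k$. Defining $\tilde{X}$ once via $\mathcal{L}$ and defining $\tilde{X}_n$ through the level-$k$ coupling for $N_k\leq n<N_{k+1}$, the marginals are exactly $\mathscr{L}_n$ and $\mathcal{L}$, while $\tilde{\mathbb{P}}(d(\tilde{X}_n,\tilde{X})\geq 2^{-k})\leq 2^{-k+1}$ for $n$ in the $k$-th block; a Borel--Cantelli argument then yields $\tilde{X}_n\to\tilde{X}$ $\tilde{\mathbb{P}}$-a.s.

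The genuinely delicate part is this general-space coupling. The difficulty is twofold: securing partitions that are simultaneously fine, nested and composed of continuity sets, and stitching the per-level, per-cell conditional samplings into a single coherent family of random variables on one probability space whose marginals are exactly, not merely approximately, $\mathscr{L}_n$ and $\mathcal{L}$. Controlling the marginals forces one to work with regular conditional probabilities on each cell and to realize them through the auxiliary uniforms, while the nestedness must be exploited so that the limit variable $\tilde{X}$ is defined independently of the level used at each stage. Completeness and separability of $\mathscr{X}$ enter precisely here, guaranteeing that the conditional measures are well behaved and that the shrinking diameters pin down a genuine almost-sure limit.
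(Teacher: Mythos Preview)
The paper does not give a proof of this lemma at all; it is stated as a quoted result with the citation \cite[Theorem 2.4]{da2014stochastic} and is used as a black box in the subsequent tightness/representation argument. Your proposal therefore goes well beyond what the paper does: you sketch the classical proof of Skorokhod's theorem, first via quantile functions on $\mathbb{R}$ and then via nested partitions into $\mathcal{L}$-continuity sets on a general Polish space, with a maximal coupling at each level and a Borel--Cantelli conclusion. This outline is essentially correct and is the standard route; the points you flag as delicate (existence of nested refining partitions into continuity sets, and assembling the per-level conditional samplings into a single family with exact marginals sharing one fixed $\tilde{X}$) are precisely where the work lies, and you have identified the right tools (separability for the partitions, regular conditional probabilities and auxiliary uniforms for the exact marginals, completeness so that shrinking cells determine a limit point). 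There is no substantive gap in the strategy, only the expected bookkeeping to make the construction of $\tilde{X}$ compatible across all levels simultaneously.
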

\noindent
Prior to applying the preceding lemmas, we first construct the suitable Polish space that we will be working on, which in our case, is defined as follows:  
$$
\mathscr{X}:= \mathscr{X}_1 \times \mathscr{X}_2 \times \mathscr{X}_0,
$$
where
$$
\mathscr{X}_1:=L^2(0,T;L^2(\mathcal{O})) \bigcap C([0,T];H^1(\mathcal{O})^*),
$$
$$
\mathscr{X}_2:=C([0,T];\mathbb{H}_0),
$$
and
$$
\mathscr{X}_0:=L^2(\mathcal{O}).
$$
We further denote by $\mathcal{B}(\mathscr{X})$ the associated $\sigma$-algebra of Borel subsets of $\mathscr{X}$. Now, we define the following measurable mapping: 
\begin{align*}
	\mathscr{J}_n :\; &(\Omega,\mathcal{F},\mathbb{P}) \rightarrow (\mathscr{X},\mathcal{B}(\mathscr{X}))\\
	&\omega \mapsto \left(u^n(\omega),\mathcal{W}^n(\omega),u_0^n(\omega)\right).
\end{align*}
To conclude our theoretical setting, we further consider the following sequence of probability measures: 
\begin{equation}\label{equameas}
	\mathscr{L}_n(A):=\left(\mathbb{P}\circ \mathscr{J}_n^{-1}\right)(A):=\mathbb{P}\left( \mathscr{J}_n^{-1}(A)\right), \quad \forall A \in \mathcal{B}(\mathscr{X}),
\end{equation}
such that
$$
\mathscr{L}_n=\mathscr{L}_{u^n} \times \mathscr{L}_{\mathcal{W}^n} \times \mathscr{L}_{u^n_0},
$$
where $\mathscr{L}_{u^n}$, $\mathscr{L}_{\mathcal{W}^n}$ and $\mathscr{L}_{u^n_0}$ are the laws of $u^n$, $\mathcal{W}^n$ and $u^n_0$, defined on $\left(\mathscr{X}_1,\mathcal{B}(\mathscr{X}_1)\right)$, $\left(\mathscr{X}_2,\mathcal{B}(\mathscr{X}_2)\right)$ and $\left(\mathscr{X}_0,\mathcal{B}(\mathscr{X}_0)\right)$, respectively.

With the previous setting, in order to achieve the tightness of the sequence of probability measures $\left(\mathscr{L}_n\right)_{n \in \mathbb{N}^*}$ given by Formula \eqref{equameas}, due to the presence of degenerate diffusion, in contrast to earlier contributions we will first need to separately establish the tightness of the sequence of probabilistic laws ${\left(\mathcal{L}_{A(u_{n})}\right)}_{n \in \mathbb{N}^*}$ (corresponding to $\left(A(u^n)\right)_{n \in \mathbb{N}^*}$). We highlight that in the non-degenerate case, one does not need to establish such a result.

\begin{lem}\label{tightaun}
	Let Assumptions \ref{a1}-\ref{a5} be satisfied and assume further that $0 \leq u_0 \leq \overline{u},\; \mathbb{P}\text{-a.s},$ where $\overline{u}$ is as defined in Assumption \ref{a1} and $\sigma(0)=\sigma(\overline{u})=0$. Then, the sequence of probability measures ${\left(\mathcal{L}_{A(u_{n})}\right)}_{n \in \mathbb{N}^*}$ is uniformly tight.
\end{lem}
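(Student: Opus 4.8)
The plan is to apply a standard tightness criterion à la Aubin–Lions for the sequence of laws $\left(\mathcal{L}_{A(u^n)}\right)_{n\in\mathbb{N}^*}$ on the Polish space $\mathscr{X}_1 = L^2(0,T;L^2(\mathcal{O})) \cap C([0,T];H^1(\mathcal{O})^*)$. Recall the classical fact that a set of the form
$$
\mathscr{K}_{R}:=\Bigl\{ w \in L^2(0,T;H^1(\mathcal{O})) : \ \|w\|_{L^2(0,T;H^1(\mathcal{O}))} + \sup_{|\tau|\in(0,\delta)}\ \|w(\cdot+\tau)-w(\cdot)\|_{L^1(0,T;H^1(\mathcal{O})^*)} \le R(\delta)\Bigr\}
$$
with $R(\delta)\to 0$ suitably as $\delta\to 0$ is relatively compact in $\mathscr{X}_1$, by the Aubin–Lions–Simon compactness lemma \cite{simon1986compact} together with the compact embedding $H^1(\mathcal{O})\hookrightarrow\hookrightarrow L^2(\mathcal{O})\hookrightarrow H^1(\mathcal{O})^*$. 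Hence it suffices to produce, for every $\eta>0$, a compact set $\mathscr{C}_\eta\subset\mathscr{X}_1$ with $\mathcal{L}_{A(u^n)}(\mathscr{C}_\eta) > 1-\eta$ uniformly in $n$, and by the above it is enough to bound, uniformly in $n$, the expectations of $\|A(u^n)\|_{L^2(0,T;H^1(\mathcal{O}))}$ and of the $L^1(0,T;H^1(\mathcal{O})^*)$ time-translation modulus of $A(u^n)$, followed by Markov's inequality.

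The two required moment bounds are already essentially at hand. First, since $0\le u^n\le\overline{u}$ $\mathbb{P}$-a.s.\ (Lemma \ref{lemmaa}), we have $\|A(u^n)\|_{L^\infty(Q_T)}\le \overline{u}\,\|a\|_{L^\infty(0,\overline{u})}=:C$ deterministically, so $A(u^n)$ is bounded in $L^2(0,T;L^2(\mathcal{O}))$ by a deterministic constant; combined with the a priori bound $\mathbb{E}\int_0^T\!\!\int_{\mathcal{O}}|\nabla A(u^n)|^2\,dx\,dt\le C$ from Lemma \ref{lémmaaprioriés}, this yields $\mathbb{E}\,\|A(u^n)\|_{L^2(0,T;H^1(\mathcal{O}))}^2\le C$ with $C$ independent of $n$. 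Second, by the remark following Lemma \ref{lémmatranslaté}, the monotonicity of $A$ together with the uniform $L^\infty$-bound (so that $|A(u^n(t+\tau))-A(u^n(t))|\le \|a\|_{L^\infty(0,\overline u)}|u^n(t+\tau)-u^n(t)|$ pointwise and hence $\|A(u^n(t+\tau))-A(u^n(t))\|_{H^1(\mathcal{O})^*}\le C\,\|u^n(t+\tau)-u^n(t)\|_{H^1(\mathcal{O})^*}$) gives
$$
\mathbb{E}\ \sup_{|\tau|\in(0,\delta)}\ \|A(u^n)(\cdot+\tau)-A(u^n)(\cdot)\|_{L^1(0,T;H^1(\mathcal{O})^*)}\ \le\ C\,\delta,
$$
uniformly in $n$, for all small $\delta>0$.

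Putting these together, fix $\eta>0$. By Markov's inequality choose $R_1$ with $\mathbb{P}\bigl(\|A(u^n)\|_{L^2(0,T;H^1(\mathcal{O}))}>R_1\bigr) \le \eta/2$ for all $n$; and for each $j\in\mathbb{N}^*$ choose $\delta_j>0$ small enough that $C\,\delta_j\,2^{j}\,4/\eta \le 1$, so that, again by Markov, $\mathbb{P}\bigl(\sup_{|\tau|\in(0,\delta_j)}\|A(u^n)(\cdot+\tau)-A(u^n)(\cdot)\|_{L^1(0,T;H^1(\mathcal{O})^*)} > 1/j\bigr)\le \eta\,2^{-j-1}$ for all $n$. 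Then the set
$$
\mathscr{C}_\eta := \Bigl\{ w\in \mathscr{X}_1 : \|w\|_{L^2(0,T;H^1(\mathcal{O}))}\le R_1,\ \ \sup_{|\tau|\in(0,\delta_j)}\|w(\cdot+\tau)-w(\cdot)\|_{L^1(0,T;H^1(\mathcal{O})^*)}\le 1/j\ \ \forall j\Bigr\}
$$
is relatively compact in $\mathscr{X}_1$ by the Aubin–Lions–Simon lemma, and $\mathcal{L}_{A(u^n)}(\overline{\mathscr{C}_\eta})\ge \mathbb{P}(\mathscr{J}_n \text{ lands in }\mathscr{C}_\eta) \ge 1-\eta/2-\sum_j \eta 2^{-j-1} = 1-\eta$ for every $n$. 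Taking $\overline{\mathscr{C}_\eta}$ as the required compact set proves uniform tightness of $\left(\mathcal{L}_{A(u^n)}\right)_{n\in\mathbb{N}^*}$.

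The main obstacle — and the reason this lemma is singled out in the degenerate setting — is that one does \emph{not} have a uniform $L^2(0,T;H^1(\mathcal{O}))$ bound on $u^n$ itself (only the $\varepsilon_n$-weighted gradient bound $\mathbb{E}\int_0^T\!\!\int_{\mathcal O}|\sqrt{\varepsilon_n}\nabla u^n|^2\to 0$), so the compactness must be extracted at the level of $A(u^n)$ rather than $u^n$; the nonlinear change of variables $u\mapsto A(u)$ is what converts the degenerate diffusion estimate into a genuine uniform gradient bound. The remaining care is purely in checking that the time-translation estimate of Lemma \ref{lémmatranslaté}, proved for $u^n$, transfers to $A(u^n)$ — which it does thanks to the Lipschitz bound $|A(s)-A(s')|\le \|a\|_{L^\infty(0,\overline u)}|s-s'|$ valid on the invariant interval $[0,\overline u]$, as already noted in the remark after Lemma \ref{lémmatranslaté}.
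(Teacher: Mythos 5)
Your overall architecture matches the paper's: combine the a priori bounds of Lemma \ref{lémmaaprioriés} with the time-translation estimate of Lemma \ref{lémmatranslaté}, build compact sets via Aubin--Lions--Simon, and conclude by Chebyshev/Markov. However, the step where you transfer the translation estimate from $u^n$ to $A(u^n)$ is not valid as written. You argue that the pointwise bound $|A(u^n(t+\tau))-A(u^n(t))|\le \|a\|_{L^\infty(0,\overline u)}\,|u^n(t+\tau)-u^n(t)|$ implies $\|A(u^n(t+\tau))-A(u^n(t))\|_{H^1(\mathcal{O})^*}\le C\,\|u^n(t+\tau)-u^n(t)\|_{H^1(\mathcal{O})^*}$. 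Pointwise domination of absolute values does not transfer to the dual norm $H^1(\mathcal{O})^*$: a highly oscillatory function can have a tiny $H^1(\mathcal{O})^*$ norm while a function it dominates pointwise in modulus has $H^1(\mathcal{O})^*$ norm comparable to its $L^1$ norm. The correct transfer is the monotonicity/duality trick: since $A$ is nondecreasing,
$$\|A(u)-A(v)\|_{L^2(\mathcal{O})}^2 \le \|a\|_{L^\infty(0,\overline u)}\int_{\mathcal{O}}\bigl(A(u)-A(v)\bigr)(u-v)\,dx \le \|a\|_{L^\infty(0,\overline u)}\,\|A(u)-A(v)\|_{H^1(\mathcal{O})}\,\|u-v\|_{H^1(\mathcal{O})^*},$$
which, combined with the uniform gradient bound on $A(u^n)$ and the embedding $L^2(\mathcal{O})\hookrightarrow H^1(\mathcal{O})^*$, yields a translation modulus for $A(u^n)$ of order $\delta^{1/2}$ (or $\tau^{1/4}$ if one starts from the $\sqrt{\tau}$ rate actually produced in the proof of Lemma \ref{lémmatranslaté}) rather than the order $\delta$ you claim. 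This loss of exponent is precisely why the paper's summability condition reads $\sum_n \eta_n^{1/4}/\nu^n<\infty$; your Markov-inequality bookkeeping must be redone with the weaker rate, though it still closes.

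A second, smaller issue: your compact set $\mathscr{C}_\eta$ controls only the $L^2(0,T;H^1(\mathcal{O}))$ norm and an $L^1(0,T;H^1(\mathcal{O})^*)$ translation modulus. That yields relative compactness in $L^2(0,T;L^2(\mathcal{O}))$, but $\mathscr{X}_1$ also carries the $C([0,T];H^1(\mathcal{O})^*)$ topology; for that component you need the uniform-in-time modulus $\sup_t\|w(t+\tau)-w(t)\|_{H^1(\mathcal{O})^*}$ (which is what Lemma \ref{lémmatranslaté} in fact provides) together with an $L^\infty(0,T;L^2(\mathcal{O}))$ bound, so that Arzel\`a--Ascoli applies with values in $H^1(\mathcal{O})^*$. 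The paper's space $\mathscr{Z}_{\eta_n,\nu^n}$ encodes both of these; your $\mathscr{C}_\eta$ should as well.
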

\begin{proof}
	For every $\mathscr{R}_1>0$, by Chebyshev's inequality and taking into account Lemma \ref{lémmaaprioriés}, it holds that
	
	\begin{equation} \label{cheb1}
		\mathbb{P}\left(\left\{\omega \in \Omega :  \quad \Vert  u^n(\omega) \Vert_{L^\infty(0,T;L^2(\mathcal{O}))} > \mathscr{R}_1 \right\}\right) \leq \dfrac{1}{\mathscr{R}_1} \mathbb{E} \Vert  u^n(\omega) \Vert_{L^\infty(0,T;L^2(\mathcal{O}))} \leq \dfrac{C_{25}}{\mathscr{R}_{1}}.
	\end{equation}
	On the other hand, for every $\eta>0$ and $\mathscr{R}_2>0$, by using once again Chebyshev's inequality and from Lemma \ref{lémmatranslaté}, we acquire
	\begin{equation} \label{cheb2}
		\mathbb{P}\left(\left\{\omega \in \Omega :  \quad \underset{\tau \in (0,\eta)}{\sup} \Vert u(.+\tau)-u(.) \Vert_{L^\infty(0,T-\tau;H^1(\mathcal{O})^*)} > \mathscr{R}_2 \right\}\right)\leq \dfrac{C_{26}}{\mathscr{R}_{2}}.
	\end{equation}
	Now, we consider two sequences $\left(\nu^n\right)_{n \in \mathbb{N}^*} \subset  (0, +\infty)$ and $\left(\eta_n\right)_{n \in \mathbb{N}^*} \subset (0, +\infty)$ such that the following assertions are fulfilled: 
	\begin{enumerate}
		\item $\left(\nu^n,\eta_n\right) \rightarrow \left(0,0\right)$ as $n \rightarrow + \infty$;
		\item $\displaystyle \sum_{n \in \mathbb{N}^*} \dfrac{\eta_n^{\frac{1}{4}}}{\nu^n}<\infty.$ 
	\end{enumerate}
	\noindent
	Additionally, we consider the following normed space: 
	$$
	\mathscr{Z}_{\eta_n,\nu^n}:=\left\{\Psi \in L^\infty(0,T;L^2(\mathcal{O))} \bigcap L^2(0,T;H^1(\mathcal{O})):\; \underset{n \in \mathbb{N}^*}{\sup} \dfrac{1}{\nu^n} \underset{\tau \in (0,\eta_n)}{\sup} \Vert \Psi(.+\tau)-\Psi(.) \Vert_{L^\infty(0,T-\tau;H^1(\mathcal{O})^*)}<\infty\right\},
	$$
	equipped with the norm: 
	$$
	\Psi \mapsto \Vert \Psi \Vert_{\mathscr{Z}_{\eta_n,\nu^n}}:= \Vert \Psi \Vert_{L^\infty(0,T:L^2(\mathcal{O}))}+ \Vert \Psi \Vert_{L^2(0,T:H^1(\mathcal{O}))}+\underset{n \in \mathbb{N}^*}{\sup} \dfrac{1}{\nu^n} \underset{\tau \in (0,\eta_n)}{\sup} \Vert \Psi(.+\tau)-\Psi(.) \Vert_{L^\infty(0,T-\tau;H^1(\mathcal{O})^*)}.
	$$
	By Aubin-Lions compactness lemma \cite{simon1986compact}, it holds that the embedding
	\begin{equation}\label{émbéd}
		\mathscr{Z}_{\eta_n,\nu^n} \hookrightarrow L^2(0,T;L^2(\mathcal{O})) \bigcap C([0,T];H^1(\mathcal{O})^*)
	\end{equation} 
	is compact.

	\noindent
	Let $\delta>0$. Keeping Aubin-Lions compactness lemma in mind, we consider the following compact subset of $L^2(0,T;L^2(\mathcal{O})))$: 
	$$
	\mathscr{C}_1^\delta:=\left\{\Psi \in \mathscr{Z}_{\eta_n,\nu^n} : \quad  \Vert \Psi \Vert_{\mathscr{Z}_{\eta_n,\nu^n}} \leq \mathscr{R}_1^\delta \right\},
	$$
	where $\mathscr{R}_1^\delta>0$ will be chosen accordingly.  
	
	\noindent
	$\forall n \in \mathbb{N}^*$, it holds that 
	\begin{align*}
		&\mathbb{P}\left(\left\{\omega \in \Omega :  \quad   A(u^n)(\omega) \notin \mathscr{C}_1^\delta \right\}\right)\\
		&\leq \dfrac{1}{3}  \mathbb{P}\left(\left\{\omega \in \Omega :  \quad \Vert  A(u^n)(\omega) \Vert_{L^\infty(0,T;L^2(\mathcal{O}))} > \mathscr{R}_1^\delta  \right\}\right)\\
		&+ \dfrac{1}{3} \mathbb{P}\left(\left\{\omega \in \Omega :  \quad \Vert  A(u^n)(\omega) \Vert_{L^2(0,T;H^1(\mathcal{O}))} > \mathscr{R}_1^\delta  \right\}\right)\\
		&+ \dfrac{1}{3} \mathbb{P}\left(\left\{\omega \in \Omega :  \quad \underset{\tau \in (0,\eta_n)}{\sup} \Vert A(u^n)(.+\tau)(\omega)-A(u^n)(.)(\omega) \Vert_{L^\infty(0,T-\tau;H^1(\mathcal{O})^*)} > \mathscr{R}_1^\delta \nu^n  \right\}\right)\\
		&=:\mathcal{P}_1+\mathcal{P}_2+\mathcal{P}_3.
	\end{align*}
	By the definition of $A$ (Formula \eqref{défiA}) and taking Estimates \eqref{cheb1}, \eqref{cheb2}  and  Lemma \ref{lémmaaprioriés} into account, we directly obtain by Chebyshev's inequality
	\begin{equation*}
		\mathcal{P}_1 \leq \dfrac{C_{27}}{\mathscr{R}_{1}^\delta}, 
	\end{equation*}
	\begin{equation*}
		\mathcal{P}_2 \leq \dfrac{C_{28}}{\mathscr{R}_{1}^\delta}, 
	\end{equation*}
	and
	\begin{align}
		\mathcal{P}_3 &\leq \dfrac{1}{\mathscr{R}_1^\delta} \displaystyle \sum_{n \in \mathbb{N}^*}  \dfrac{1}{\nu^n} \mathbb{E} \underset{\tau \in (0,\eta_n)}{\sup} \Vert A(u^n)(.+\tau)(\omega)-A(u^n)(.)(\omega) \Vert_{L^\infty(0,T-\tau;H^1(\mathcal{O})^*)} \nonumber\leq \dfrac{C_{29}}{\mathscr{R}_{1}^\delta} \sum_{n \in \mathbb{N}^*} \dfrac{\eta_n^{\frac{1}{4}}}{\nu^n}\leq \dfrac{C_{30}}{\mathscr{R}_{1}^\delta}.
	\end{align}
	Consequently, we can adequately choose $\mathscr{R}_{1}^\delta$ such that
	\begin{equation}\label{tight1}
		\mathscr{L}_{A(u^n)}\left(\mathscr{X}_1 \setminus \mathscr{C}_1^\delta\right):=\mathbb{P}\left(\left\{\omega \in \Omega :  \quad \Vert  A(u^n)(\omega) \Vert_{L^\infty(0,T;L^2(\mathcal{O}))}\right\}\right) \leq \dfrac{\delta}{6}.
	\end{equation}
	This concludes the proof.
\end{proof}
\color{black}
\noindent
The result of tightness of the sequence of probability measures $\left(\mathscr{L}_n\right)_{n \in \mathbb{N}^*}$ immediately follows. 
\begin{lem}\label{tight}
	Let Assumptions \ref{a1}-\ref{a5} be satisfied and assume further that $0 \leq u_0 \leq \overline{u},\; \mathbb{P}\text{-a.s},$ where $\overline{u}$ is as defined in Assumption \ref{a1} and $\sigma(0)=\sigma(\overline{u})=0$. Then, the sequence of probability measures $\left(\mathscr{L}_n\right)_{n \in \mathbb{N}^*}$ given by Formula \eqref{equameas} is uniformly tight.
\end{lem}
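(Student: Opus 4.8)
The plan is to exploit the product structure $\mathscr{X}=\mathscr{X}_1\times\mathscr{X}_2\times\mathscr{X}_0$ together with the factorization $\mathscr{L}_n=\mathscr{L}_{u^n}\times\mathscr{L}_{\mathcal{W}^n}\times\mathscr{L}_{u^n_0}$. Each factor is a Polish space, and a finite Cartesian product of tight sequences of probability measures is tight: if $\mathscr{K}_1^\delta,\mathscr{K}_2^\delta,\mathscr{K}_0^\delta$ are compact sets carrying mass at least $1-\delta/3$ under the respective marginals, then $\mathscr{K}_1^\delta\times\mathscr{K}_2^\delta\times\mathscr{K}_0^\delta$ is compact in $\mathscr{X}$ and, by independence of the three components, carries mass at least $(1-\delta/3)^3\geq 1-\delta$ under $\mathscr{L}_n$. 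Hence it suffices to prove tightness of the three marginal sequences $\left(\mathscr{L}_{u^n}\right)_n$, $\left(\mathscr{L}_{\mathcal{W}^n}\right)_n$ and $\left(\mathscr{L}_{u^n_0}\right)_n$ separately.

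First I would dispose of the two ``static'' marginals by a convergence argument combined with Lemma \ref{prokhorow}. Since the canonical embedding $\mathbb{H}\hookrightarrow\mathbb{H}_0$ is Hilbert--Schmidt, the truncated processes $\mathcal{W}^n=\sum_{k=1}^n W_k\varphi_k$ form a Cauchy sequence in $\mathcal{L}^2(\Omega;C([0,T];\mathbb{H}_0))$ (use Doob's inequality on the tails), hence $\mathcal{W}^n\to\mathcal{W}$ in $\mathscr{X}_2=C([0,T];\mathbb{H}_0)$, $\mathbb{P}$-a.s.\ along a subsequence and then for the whole sequence; consequently $\mathscr{L}_{\mathcal{W}^n}$ converges weakly to the law of $\mathcal{W}$ and the family $\left(\mathscr{L}_{\mathcal{W}^n}\right)_n$ is tight. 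Likewise, because $\left(\ell_k\right)_k$ is an orthonormal basis of $L^2(\mathcal{O})$, we have $u_0^n=\Pi_n u_0\to u_0$ in $\mathscr{X}_0=L^2(\mathcal{O})$ pointwise in $\omega$, hence $\mathbb{P}$-a.s., so $\mathscr{L}_{u^n_0}$ converges weakly to $\mu$ and is tight (the moment assumption on $\mu_{u_0}$ is not needed for this step, only later for higher-moment estimates).

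The heart of the matter is the tightness of $\left(\mathscr{L}_{u^n}\right)_n$ in $\mathscr{X}_1=L^2(0,T;L^2(\mathcal{O}))\cap C([0,T];H^1(\mathcal{O})^*)$. The two-sided degeneracy prevents a direct Aubin--Lions argument on $u^n$, since the only uniform gradient bound available is $\sqrt{\varepsilon_n}\,\nabla u^n\in L^2((0,T)\times\mathcal{O})$, which degenerates as $n\to\infty$; this is exactly why Lemma \ref{tightaun} was proved first. I would therefore route the argument through $A(u^n)$, where $A(s)=\int_0^s a(r)\,dr$: by Assumption \ref{a1}, $A\colon[0,\overline{u}]\to[0,A(\overline{u})]$ is a strictly increasing continuous bijection, so $A^{-1}$ is (uniformly) continuous, and the associated Nemytskii operator $w\mapsto A^{-1}(w)$ is continuous from $\{w\in L^2(0,T;L^2(\mathcal{O})):0\leq w\leq A(\overline{u})\text{ a.e.}\}$ into $L^2(0,T;L^2(\mathcal{O}))$ (extract an a.e.\ convergent subsequence, pass to the limit by continuity of $A^{-1}$ and dominated convergence since $\abs{A^{-1}}\leq\overline{u}$, then invoke the subsequence principle). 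Given $\delta>0$, let $\mathscr{C}_1^\delta\subset\mathscr{X}_1$ be the compact set constructed in the proof of Lemma \ref{tightaun} (it is compact in $\mathscr{X}_1$ via the compact embedding used there), and set
$$
K_\delta:=\Bigl\{u\in\mathscr{X}_1:\ 0\leq u\leq\overline{u}\ \text{a.e.},\ \ A(u)\in\mathscr{C}_1^\delta,\ \ \sup_{m\in\mathbb{N}^*}\frac{1}{\nu^m}\sup_{\tau\in(0,\eta_m)}\Vert u(\cdot+\tau)-u(\cdot)\Vert_{L^\infty(0,T-\tau;H^1(\mathcal{O})^*)}\leq\mathscr{R}^\delta\Bigr\},
$$
with $\left(\nu^m\right)_m,\left(\eta_m\right)_m$ the sequences fixed in Lemma \ref{tightaun}. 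Then $K_\delta$ is compact in $\mathscr{X}_1$: for a sequence in $K_\delta$, precompactness in $L^2(0,T;L^2(\mathcal{O}))$ follows from compactness of $\mathscr{C}_1^\delta$ and continuity of $A^{-1}$ applied to $u=A^{-1}(A(u))$; precompactness in $C([0,T];H^1(\mathcal{O})^*)$ follows from the uniform bound $0\leq u\leq\overline{u}$ (hence boundedness in $L^\infty(0,T;L^2(\mathcal{O}))$, with values in an $H^1(\mathcal{O})^*$-precompact set by the compact embedding $L^2(\mathcal{O})\hookrightarrow\hookrightarrow H^1(\mathcal{O})^*$ dual to Rellich's theorem) together with the uniform equicontinuity encoded in the last condition (note $\nu^m,\eta_m\to 0$), via Arzelà--Ascoli; the two limits agree since $C([0,T];H^1(\mathcal{O})^*)\hookrightarrow L^2(0,T;H^1(\mathcal{O})^*)$. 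Finally, a union bound gives $\mathbb{P}(u^n\notin K_\delta)\leq\mathbb{P}(0\leq u^n\leq\overline{u}\text{ fails})+\mathbb{P}(A(u^n)\notin\mathscr{C}_1^\delta)+\mathbb{P}(\text{translation bound fails})$; the first term vanishes by Lemma \ref{lemmaa}, the second is $\leq\delta/2$ by Lemma \ref{tightaun} for $\mathscr{C}_1^\delta$ chosen large enough, and the third is $\leq\delta/2$ by Chebyshev's inequality applied to the translation estimate of Lemma \ref{lémmatranslaté}, exactly as in the proof of Lemma \ref{tightaun} (summability of $\eta_m/\nu^m$ following from that of $\eta_m^{1/4}/\nu^m$), upon enlarging $\mathscr{R}^\delta$.

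Assembling the three marginal tightness statements through the product compact set $K_\delta\times\mathscr{K}_2^\delta\times\mathscr{K}_0^\delta$ yields the claim. The main obstacle, and the only genuinely new feature compared with the non-degenerate case, is the third paragraph: one cannot compactify $u^n$ directly, so tightness must be transported from $A(u^n)$ (already handled in Lemma \ref{tightaun}) back to $u^n$ through the degenerate change of variables $A^{-1}$. This is a well-behaved superposition operator only in the $L^2(0,T;L^2(\mathcal{O}))$ component; the time-continuity component in $H^1(\mathcal{O})^*$ cannot be pushed forward through $A^{-1}$ (which is not Lipschitz near the two zeros of $a$) and must instead be obtained directly from the uniform translation estimate of Lemma \ref{lémmatranslaté}.
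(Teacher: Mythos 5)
Your proposal is correct and, at the top level, follows the same strategy as the paper (reduce to the three marginals, transport tightness of $u^n$ from that of $A(u^n)$ established in Lemma~\ref{tightaun} via the continuity of $A^{-1}$, and handle $\mathcal{W}^n$ and $u_0^n$ by almost-sure convergence plus Prokhorov), but the execution of the key step is genuinely different. The paper argues abstractly: it applies Prokhorov's theorem to extract weak convergence of $\mathscr{L}_{A(u^n)}$, pushes the limit forward through the continuous map $A^{-1}$ to get convergence in distribution of $u^n=A^{-1}(A(u^n))$, and then invokes the converse direction of Prokhorov to recover tightness of $\mathscr{L}_{u^n}$; it does not say how the $C([0,T];H^1(\mathcal{O})^*)$ component of the $\mathscr{X}_1$-topology is controlled. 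You instead build an explicit compact set $K_\delta\subset\mathscr{X}_1$ and verify $\mathbb{P}(u^n\notin K_\delta)\leq\delta$ directly by Chebyshev and a union bound; this is more work but buys two things the paper leaves implicit: (i) a concrete justification that the Nemytskii operator $A^{-1}$ is continuous on the order interval $\{0\leq w\leq A(\overline{u})\}$ in $L^2(0,T;L^2(\mathcal{O}))$ (a.e.\ subsequence plus dominated convergence, using $\abs{A^{-1}}\leq\overline{u}$ --- note $A^{-1}$ is not Lipschitz at the degeneracy points, so uniform continuity on the compact range is exactly what is needed), and (ii) the time-equicontinuity in $H^1(\mathcal{O})^*$ needed for compactness in $C([0,T];H^1(\mathcal{O})^*)$, which you correctly observe cannot be pushed through $A^{-1}$ and must come straight from Lemma~\ref{lémmatranslaté} together with Arzelà--Ascoli and the compact embedding $L^2(\mathcal{O})\hookrightarrow\hookrightarrow H^1(\mathcal{O})^*$. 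One small correction: $u^n$, $\mathcal{W}^n$ and $u^n_0$ are \emph{not} independent ($u^n$ is constructed from the other two), so the bound $(1-\delta/3)^3\geq 1-\delta$ via independence is not justified; replace it by the union bound $\mathbb{P}\bigl((u^n,\mathcal{W}^n,u^n_0)\notin K_\delta\times\mathscr{K}_2^\delta\times\mathscr{K}_0^\delta\bigr)\leq\sum\mathbb{P}(\cdot\notin\cdot)\leq\delta$, which needs no independence and gives the same conclusion. You should also take the closure of $K_\delta$ (still precompact, hence compact) to avoid checking that the defining conditions are closed. With these cosmetic repairs the argument is complete and, if anything, more self-contained than the paper's.
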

\begin{proof}
	
	\noindent
	Let $\phi : \mathscr{X} \rightarrow \mathbb{R}$ be a continuous bounded function.  In order to address the tightness of $\left(\mathscr{L}_{u^n}\right)_{n \in \mathbb{N}^*}$, we use the tightness of $\left(\mathscr{L}_{A(u^n)}\right)_{n \in \mathbb{N}^*}$. Indeed by Prokhorov's theorem, there exists a probabilistic law $\tilde{\mathscr{L}}$ corresponding to a random variable $v$ such that (up to a subsequence) 
	$$
	\int_{\mathscr{X}} \phi(\omega) d\mathscr{L}_{A(u^n)} \rightarrow \int_{\mathscr{X}} \phi(\omega) d\tilde{\mathscr{L}}.
	$$
	
	\noindent
	Now, taking into account that $A^{-1}$ is well defined and continuous along with the fact that $(A(u^n))_{n \in \mathbb{N}^*}$ converges in distribution to $v$, we deduce that 
	$$
	\int_{\mathscr{X}} \phi(\omega) d\mathscr{L}_{u^n} \rightarrow \int_{\mathscr{X}} \phi(\omega) d{\mathscr{L}_{A^{-1}(v)}}.
	$$
	\noindent
	Hence, by using Prokhorov's theorem, we deduce the tightness of $\left(\mathscr{L}_{u^n}\right)_{n \in \mathbb{N}^*}$.
	
	\noindent 
	To address the tightness of $\left(\mathcal{L}_{{\mathcal{W}}^n}\right)_{n \in \mathbb{N}^*}$ and $\left(\mathcal{L}_{{u_0}^n}\right)_{n \in \mathbb{N}^*}$, we recall that the finite series $\mathcal{W}^n$ is $\mathbb{P}$\text{-a.s} convergent in $C([0,T];\mathbb{H}_0)$ as $n\rightarrow +\infty$, which in turns yields the weak convergence of $\mathscr{L}_{\mathcal{W}^n}$. Therefore, by using Prokhorov's theorem, we deduce the tightness of $\mathscr{L}_{\mathcal{W}^n}$ (up to a subsequence). The result immediately follows by recalling the definition of $\left(\mathscr{L}_n\right)_{n \in \mathbb{N}^*}$.
\end{proof}
As a consequence, the following second intermediate result of this section reads as follows.
\begin{lem}
	Let Assumptions \ref{a1}-\ref{a5} be satisfied and assume further that $0 \leq u_0 \leq \overline{u},\; \mathbb{P}\text{-a.s},$ where $\overline{u}$ is as defined in Assumption \ref{a1} and $\sigma(0)=\sigma(\overline{u})=0$. Then, (up to a subsequence)  $\left(u^n,\mathscr{L}_n\right)_{n \in \mathbb{N}^*}$ has a Skorokhod's representation.
\end{lem}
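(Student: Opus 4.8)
The plan is to combine the tightness established in Lemma~\ref{tight} with Prokhorov's theorem (Lemma~\ref{prokhorow}) and Skorokhod's representation theorem (Lemma~\ref{skorokhod}), applied on the product Polish space $\mathscr{X}=\mathscr{X}_1\times\mathscr{X}_2\times\mathscr{X}_0$. First I would observe that by Lemma~\ref{tight} the sequence of laws $(\mathscr{L}_n)_{n\in\mathbb{N}^*}$ of the random variables $\mathscr{J}_n=(u^n,\mathcal{W}^n,u_0^n)$ is uniformly tight on $(\mathscr{X},\mathcal{B}(\mathscr{X}))$; since $\mathscr{X}$ is a finite product of Polish spaces it is itself Polish, so Prokhorov's theorem provides a subsequence, still denoted $(\mathscr{L}_n)_n$, converging weakly to some probability measure $\mathscr{L}$ on $\mathscr{X}$.

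Next, I would apply Skorokhod's representation theorem (Lemma~\ref{skorokhod}) to this weakly convergent subsequence: there exist a probability space $(\tilde\Omega,\tilde{\mathcal F},\tilde{\mathbb P})$, $\mathscr{X}$-valued random variables $\tilde{\mathscr{J}}_n:=(\tilde u^n,\tilde{\mathcal W}^n,\tilde u_0^n)$ with $\mathrm{law}(\tilde{\mathscr{J}}_n)=\mathscr{L}_n$, and a limit $\tilde{\mathscr{J}}:=(\tilde u,\tilde{\mathcal W},\tilde u_0)$ with $\mathrm{law}(\tilde{\mathscr{J}})=\mathscr{L}$, such that $\tilde{\mathscr{J}}_n\to\tilde{\mathscr{J}}$ $\tilde{\mathbb P}$-a.s. in $\mathscr{X}$ as $n\to+\infty$. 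In particular the first component satisfies $\tilde u^n\to\tilde u$ $\tilde{\mathbb P}$-a.s. in $\mathscr{X}_1=L^2(0,T;L^2(\mathcal{O}))\cap C([0,T];H^1(\mathcal{O})^*)$, which is precisely the asserted Skorokhod representation associated with $(u^n,\mathscr{L}_n)_n$. I would then record the properties that are determined by the laws alone and therefore transfer verbatim to the new space: the pointwise bounds $0\le \tilde u^n\le\overline u$ $\tilde{\mathbb P}$-a.s., the law $\mu$ of $\tilde u_0^n$, and the independent-increment/covariance structure making each $\tilde{\mathcal W}^n$ a (truncated) cylindrical Wiener process; one then equips $\tilde\Omega$ with the completed filtration $\tilde{\mathcal F}_t:=\sigma\{\tilde u(s),\tilde{\mathcal W}(s),\tilde u_0:\ s\le t\}$, with respect to which $\tilde{\mathcal W}$ is an adapted cylindrical Wiener process.

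The only genuinely delicate point is the applicability of Skorokhod's theorem on the factor $\mathscr{X}_1$, which requires $\mathscr{X}_1$ — and hence $\mathscr{X}$ — to be a separable metric (Polish) space; this holds because $L^2(0,T;L^2(\mathcal{O}))$ and $C([0,T];H^1(\mathcal{O})^*)$ are separable Banach spaces and their intersection, normed by the sum of the two norms, remains separable and complete. Beyond this, the proof is a direct invocation of Lemmas~\ref{prokhorow} and~\ref{skorokhod}: the substantive work, namely the uniform estimates of Lemmas~\ref{lémmaaprioriés} and~\ref{lémmatranslaté} and the tightness of Lemmas~\ref{tightaun} and~\ref{tight}, has already been done, so I expect no further obstacle beyond the bookkeeping of subsequences and of which properties are law-determined.
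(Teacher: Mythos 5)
Your proposal is correct and follows essentially the same route as the paper: invoke the tightness from Lemma~\ref{tight}, extract a weakly convergent subsequence via Prokhorov's theorem (Lemma~\ref{prokhorow}), and apply Skorokhod's representation theorem (Lemma~\ref{skorokhod}) on the product space $\mathscr{X}$ to obtain the new probability space and almost-surely convergent representatives. Your added remarks on the Polish structure of $\mathscr{X}_1$ and on which properties transfer by equality of laws are sound and only make explicit what the paper leaves implicit.
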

\begin{proof}
	By Lemma \ref{tight}, there exists a (non-relabeled) subsequence $\left(\mathscr{L}_n\right)_{n \in \mathbb{N}^*}$ converging weakly to a limit which we denote by $\mathscr{L}$. Now, let $n \in \mathbb{N}^*$. By using Lemma \ref{skorokhod}, we obtain the existence of a new probabilistic space $\left(\tilde{\Omega},\tilde{\mathcal{F}},\tilde{\mathbb{P}}\right)$, random variables $\left(\tilde{u}^n,\tilde{\mathcal{W}}_n,\tilde{u}_0^n\right)$ and  $\left(\tilde{u},\tilde{\mathcal{W}},\tilde{u}_0\right)$ with respective laws $\tilde{\mathscr{L}}_n$ and $\tilde{\mathscr{L}}$ such that the law $\left(\tilde{u}^n,\tilde{\mathcal{W}}_n,\tilde{u}_0^n\right)$ is $\mathscr{L}_n$ and the law of $\left(\tilde{u},\tilde{\mathcal{W}},\tilde{u}_0\right)$ is $\mathscr{L}$.
%
\end{proof}
\begin{rem}
	Note that by the equality of laws $\mathscr{L}_n=\tilde{\mathscr{L}}_n$, it holds that the new sequence $\left(\tilde{u}^n\right)_{n \in \mathbb{N}^*}$ satisfies the uniform a priori estimates given by Lemmas \ref{lemmaa} and \ref{lémmaaprioriés}.
\end{rem}
\subsection{Existence}
Let us now proceed to the last step towards proving our main result. To this end, we first consider the following stochastic basis: 
$$
\tilde{\mathscr{S}}:=\left(\tilde{\Omega},\tilde{\mathcal{F}},\{\tilde{\mathcal{F}}_t\}_{0 \leq t \leq T},\tilde{\mathbb{P}},\tilde{\mathcal{W}}\right),
$$
where
$$
\tilde{\mathcal{F}}_t:={\Sigma}\left(\Sigma\left(\left(\tilde{u},\tilde{\mathcal{W}},\tilde{u}_0\right)|_{(0,T)}\right) \bigcup \left\{N \in \tilde{F}: \quad \tilde{\mathbb{P}}(N)=0\right\} \right),
$$
such that $.|_{(0,T)}$ stands for the restriction, of a given stochastic process, to the interval $(0,T)$, and $\Sigma(\cdot)$ denotes the $\sigma$-algebra generated by a given set.
\noindent
Additionally, $\tilde{W}$ is a cylindrical Wiener process defined as the $\mathbb{P}$-almost-sure limit of the finite cylindrical Wiener process $\tilde{\mathcal{W}^n}$ (see Convergence $\eqref{convsko}_{3}$). Otherwise written
$$
\tilde{\mathcal{W}}_t:= \displaystyle \sum_{k \in \mathbb{N}^*} \tilde{W}_k(t) \varphi_k, \quad \forall t \in [0,T],$$
where $\left\{\left(\tilde{W}_k(t)\right)_{0 \leq t \leq T}, k \in \mathbb{N}\right\}$ is a sequence of independent $\mathcal{F}_t$-adapted one-dimensional Wiener processes and $\left(\varphi_k\right)_{k \in \mathbb{N}^*}$ is as introduced in Section \ref{section2}. 

\noindent
Thus, to conclude the main result, it remains to prove that Identity \eqref{idn} is satisfied by $\left(\tilde{u},\tilde{W}\right)$ uniquely for $\tilde{u}$. This is the subject of the following lemma:
\begin{lem}
	Let Assumptions \ref{a1}-\ref{a5} be satisfied and assume further that $0 \leq u_0 \leq \overline{u},\; \mathbb{P}\text{-a.s},$ where $\overline{u}$ is as defined in Assumption \ref{a1} and $\sigma(0)=\sigma(\overline{u})=0$. Assume further that the probability measure $\mu$ satisfies 
\begin{equation}\label{condinit}
\exists q>2, \quad \int_{L^2(\mathcal{O})} \Vert u \Vert_{L^2(\mathcal{O})}^q d\mu(u)< \infty.
\end{equation}
 Then, it holds that $\left(\tilde{u},\tilde{W},\tilde{u}_0\right)$ satisfies $\tilde{\mathbb{P}}\text{-a.s}$  the following identity: 
\begin{equation}\label{idntild}
	\begin{split}
		\int_\mathcal{O} \tilde{u}(t)v\; dx&= \int_\mathcal{O} \tilde{u}_0v\; dx- \int_{0}^t \int_{\mathcal{O}} a(\tilde{u}(t))\nabla \tilde{u}(t) \cdot \nabla v\; dxdt+\int_{0}^t \int_{\mathcal{O}} \tilde{u}(t)\left(\nabla \mathcal{K} \ast \tilde{u}(t)\right) \cdot \nabla v\; dxdt\\
		&+\int_{0}^t \int_{\mathcal{O}} f(\tilde{u}(t))v\; dxdt+\int_{0}^t \int_{\mathcal{O}} \sigma(\tilde{u}(t))v\; dx d\tilde{\mathcal{W}}_s, \quad \forall v \in H^1(\mathcal{O}), \; \forall t \in [0,T],
	\end{split}
\end{equation}
where $\tilde{u}_0$ has a probabilisic law $\mu$ and $0 \leq \tilde{u}_0 \leq \overline{u}\; \mathbb{P}\text{-a.s}$ and $\sigma(0)=\sigma(\overline{u})=0$.
\end{lem}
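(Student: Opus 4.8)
The plan is to pass to the limit $n\to\infty$ in the Faedo--Galerkin identity \eqref{galrk} written on the new probability space $(\tilde\Omega,\tilde{\mathcal F},\tilde{\mathbb P})$, using the almost-sure convergences furnished by Skorokhod's representation, and then to prove pathwise uniqueness via a duality argument. First I would record the convergences: on $\tilde\Omega$, up to a subsequence, $\tilde u^n\to\tilde u$ in $L^2(0,T;L^2(\mathcal O))\cap C([0,T];H^1(\mathcal O)^*)$, $\tilde{\mathcal W}^n\to\tilde{\mathcal W}$ in $C([0,T];\mathbb H_0)$, and $\tilde u_0^n\to\tilde u_0$ in $L^2(\mathcal O)$, $\tilde{\mathbb P}$-a.s.; by the equality of laws the uniform bounds of Lemmas~\ref{lemmaa} and~\ref{lémmaaprioriés} and Corollary~\ref{cor:Lq0-est} transfer to $\tilde u^n$, in particular $0\le\tilde u^n\le\overline u$ a.s., $\tilde u^n$ is bounded in $L^{q_0}(\tilde\Omega;L^\infty(0,T;L^2(\mathcal O)))$ and $\nabla A(\tilde u^n)$ in $L^{q_0}(\tilde\Omega;L^2(Q_T))$. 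From $0\le\tilde u^n\le\overline u$ and a.s. convergence in $L^2(Q_T)$, dominated convergence gives $\tilde u^n\to\tilde u$ strongly in $L^p(\tilde\Omega\times Q_T)$ for every $p<\infty$, hence also $A(\tilde u^n)\to A(\tilde u)$, $f(\tilde u^n)\to f(\tilde u)$ (note $f_M=f$ once $M\ge\overline u$, which fixes the choice of $M$), $\nabla\mathcal K\ast\tilde u^n\to\nabla\mathcal K\ast\tilde u$, and by Assumption~\ref{a4}, $\sigma(\tilde u^n)\to\sigma(\tilde u)$ in $L^2(\tilde\Omega;L_2(\mathbb H;L^2(\mathcal O)))$; the weak limit $\nabla A(\tilde u^n)\rightharpoonup\nabla A(\tilde u)$ in $L^2(\tilde\Omega\times Q_T)$ follows by identifying the limit.

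With these in hand I would fix $v\in H^1(\mathcal O)$, replace $v$ by $\Pi_n v\to v$ in $H^1(\mathcal O)$ in \eqref{galrk}, and pass to the limit term by term. The diffusion term is handled by rewriting $a_{\varepsilon_n}(\tilde u^n)\nabla\tilde u^n\cdot\nabla\Pi_n v=\nabla A(\tilde u^n)\cdot\nabla\Pi_n v+\varepsilon_n\nabla\tilde u^n\cdot\nabla\Pi_n v$; the first piece converges by weak--strong pairing and the second vanishes since $\sqrt{\varepsilon_n}\nabla\tilde u^n$ is bounded in $L^2(\tilde\Omega\times Q_T)$ by \eqref{apriori} while $\sqrt{\varepsilon_n}\to0$. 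The aggregation, reaction and initial terms pass by the strong $L^p$ convergences above. For the stochastic term one passes to the limit in $\sum_k\int_0^t\langle\sigma_k^n(\tilde u^n(s)),\Pi_n v\rangle\,d\tilde{\mathcal W}^n_s$; this is the standard but delicate point, handled by the usual argument (e.g. Debussche--Glatt-Holtz--Temam or Bensoussan) that shows the stochastic integral converges in probability to $\int_0^t\langle\sigma(\tilde u(s)),v\rangle\,d\tilde{\mathcal W}_s$, using the $L^{q_0}$ moment bound with $q_0>9/2$ (hence $>2$) to upgrade convergence in probability to convergence in $L^1(\tilde\Omega)$ and thereby identify the limit; the hypothesis \eqref{condinit} on $\mu$ guarantees the corresponding bound on the initial data. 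One then checks $\tilde u\in L^\infty(Q_T)\cap C([0,T];L^2(\mathcal O))$ $\tilde{\mathbb P}$-a.s. (the $L^\infty$ bound is inherited; continuity in $L^2$ follows from the limiting equation since the right-hand side is a continuous $H^1(\mathcal O)^*$-valued process that in fact takes values in $L^2(\mathcal O)$, by a standard regularity/interpolation argument) and that $A(\tilde u)\in L^2(0,T;H^1(\mathcal O))$ $\tilde{\mathbb P}$-a.s. This establishes \eqref{idntild}, i.e. existence of a weak almost-surely positive martingale solution.

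For uniqueness I would argue by duality on a fixed stochastic basis. Let $u_1,u_2$ be two solutions with the same initial datum and noise, set $w=u_1-u_2$, and note $A(u_1)-A(u_2)=\int_{u_2}^{u_1}a(r)\,dr$. Because $a$ is two-sidedly degenerate one cannot directly test with $w$; instead, following the duality method, for a smooth test function one introduces the solution $\varphi$ of a backward (dual) parabolic problem with coefficient built from the ``averaged diffusivity'' $\displaystyle \tilde a:=\frac{A(u_1)-A(u_2)}{u_1-u_2}$ (set to a positive constant where $u_1=u_2$), which is nonnegative and bounded since $0\le u_1,u_2\le\overline u$. Testing the equation for $w$ against $\varphi$, the degenerate diffusion term becomes $\int\nabla\bigl(\tilde a\,w\bigr)\cdot\nabla\varphi$ which is absorbed by the dual equation; the aggregation term is controlled using $\|K\|_{C^2}<\infty$ and the $L^\infty$ bounds, the reaction term by the local Lipschitz bound on $f$ on $[0,\overline u]$, and the stochastic term, after applying Itô's formula to $\int_{\mathcal O}w\varphi\,dx$ or to $\|w\|^2$ with a suitable regularized test function, contributes a martingale with zero expectation plus an Itô correction that is Lipschitz-bounded by Assumption~\ref{a4}. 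Taking expectations and applying Gr\"onwall's inequality forces $\mathbb E\int_{\mathcal O}w\,\psi\,dx=0$ for a dense family of $\psi$ and all $t$, whence $\tilde u_1=\tilde u_2$ $\tilde{\mathbb P}$-a.s. in $Q_T$.

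The main obstacle I anticipate is twofold: identifying the limit of the stochastic integrals on the new probability space (one must verify that $\tilde{\mathcal W}^n$ remains a cylindrical Wiener process with respect to the canonical filtration and that the martingale structure is preserved under Skorokhod's construction, then use a convergence-in-probability argument combined with the higher moment bound to pin down the limit), and making the duality argument rigorous despite the two-sided degeneracy of $a$ — in particular constructing the dual test function with the merely bounded, possibly vanishing, averaged coefficient $\tilde a$ and justifying the integrations by parts, which typically requires an additional regularization of $\tilde a$ and a careful limit passage.
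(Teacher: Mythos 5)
Your argument for the identity \eqref{idntild} is correct and follows essentially the same route as the paper: Skorokhod almost-sure convergences plus the transferred uniform bounds and the $L^\infty$ bound give strong convergence of $\tilde u^n$ in every $\mathcal L^p(\tilde\Omega; L^p(Q_T))$, the diffusion term is handled through $\nabla A(\tilde u^n)$ together with the vanishing $\varepsilon_n\nabla\tilde u^n$ contribution, $f_M=f$ for $M$ large, and the stochastic integral is identified via the Lipschitz condition on $\sigma$, the Debussche--Glatt-Holtz--Temam convergence lemma, and a BDG/Vitali upgrade using the higher moment bound --- exactly the paper's chain of steps. The uniqueness discussion you append is not part of this lemma (the paper proves it separately), and for the record the paper's duality argument there tests against the solution of the stationary Neumann problem $-\Delta w=\tilde u_1-\tilde u_2$ (an $H^{-1}$-type contraction) rather than a backward parabolic dual problem with the averaged diffusivity, which sidesteps the regularization of $\tilde a$ you flag as a difficulty.
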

\begin{proof}
We recall that the sequence of Faedo-Galerkin solutions in the non-degenerate case for $\varepsilon_n:=\dfrac{1}{n}, \; n \in \mathbb{N}^*$. The latter satisfies $\tilde{\mathbb{P}}\text{-a.s},$ $\forall t \in [0,T]$ the following identity:
\begin{equation}\label{suitéidntild}
	\begin{split}
		\int_\mathcal{O} \tilde{u}^n(t)v\; dx&= \int_\mathcal{O} \tilde{u}_0^n v\; dx- \int_{0}^t \int_{\mathcal{O}} a_{\varepsilon_n}(\tilde{u}^n(t))\nabla \tilde{u}^n(t) \cdot \nabla \Pi_n v\; dxdt+\int_{0}^t \int_{\mathcal{O}} \tilde{u}^n(t)\left(\nabla \mathcal{K} \ast \tilde{u}^n(t)\right) \cdot \nabla \Pi_n v\; dxdt\\
		&+\int_{0}^t \int_{\mathcal{O}} f_M(\tilde{u}^n(t)) \Pi_n v\; dxdt+\int_{0}^t \int_{\mathcal{O}} \sigma^n(\tilde{u}^n(t)) \Pi_nv\; dx d\tilde{\mathcal{W}}^n_s, \quad \forall v \in H^1(\mathcal{O}).\\
		&  
	\end{split}
\end{equation}
Now, by Lemmas \ref{lemmaa} and \ref{lémmaaprioriés} and taking subsequences if necessary, we obtain the following types of convergences:
	\begin{equation}\label{tyesofconv}
\begin{cases}
		\begin{split}
			&\tilde{u}^n \rightarrow \tilde{u}, &&\text{weakly* in } \mathcal{L}^2(\tilde{\Omega};L^\infty(Q_T)),\\
			&A(\tilde{u}^n) \rightarrow \overline{A}, &&\text{weakly in } \mathcal{L}^2(\tilde{\Omega};L^2(0,T;H^1(\mathcal{O}))),\\
			&\varepsilon_n \tilde{u}_{n} \rightarrow 0, &&\text{weakly in } \mathcal{L}^2(\tilde{\Omega};L^2(0,T;H^1(\mathcal{O}))),\\
			&\tilde{\mathcal{W}}^n \rightarrow \tilde{\mathcal{W}}  &&\text{strongly in } \mathcal{L}^2(\tilde{\Omega};C([0,T];\mathbb{H}_0),\\
			&\tilde{u}_0^n \rightarrow \tilde{u}_0  &&\text{strongly in } \mathcal{L}^2(\tilde{\Omega};L^2(\mathcal{O})).
		\end{split} 
\end{cases}
	\end{equation}
In addition, by recalling Embedding \eqref{émbéd}, we obtain that
$$
A(\tilde{u}^n) \rightarrow \overline{A}, \quad \text{strongly in }  \mathcal{L}^2(\Omega;L^2(0,T;L^2(\mathcal{O}))).
$$
{\color{black}Consequently, by taking advantage of the monotonicity of $A$ along with the fact that $A^{-1}$ is well defined and continuous, we derive from Lemmas \ref{lémmaaprioriés} and \ref{lémmatranslaté} and Aubin-Lions compactness lemma \cite{simon1986compact}}
$$
A(\tilde{u}^n) \rightarrow {A}(\tilde{u}), \quad \text{strongly in }  \mathcal{L}^2(\Omega;L^2(0,T;L^2(\mathcal{O}))),
$$
and consequently
$$
\tilde{u}^n \rightarrow \tilde{u}, \quad \text{strongly in }  \mathcal{L}^2(\Omega;L^2(0,T;L^2(\mathcal{O}))),
$$
which together with Convergence $\eqref{tyesofconv}_1$ yield that 
\begin{equation}\label{convlq}
\tilde{u}^n \rightarrow \tilde{u}, \quad \text{strongly in }  \mathcal{L}^2(\Omega;L^q(0,T;L^q(\mathcal{O}))), \; \forall q \in [1,\infty).
\end{equation}
Let $\mathcal{Z} \subset \Omega \times (0,T)$ be a measurable subset. Then, by multiplying  Identity \eqref{suitéidntild} by the characteristic function $\mathbbm{1}_{\mathcal{Z}}$, taking the expectation with respect to $\mathbb{\tilde{P}}$, and passing the limit as $n \rightarrow +\infty$ while keeping in mind $\eqref{tyesofconv}_6$, we obtain as $n \rightarrow +\infty$:
$$
\tilde{\mathbb{E}} \int_0^T \int_\mathcal{O} \tilde{u}_0^n v \mathbbm{1}_{\mathcal{Z}}(\omega,t)\; dxdt \rightarrow  \tilde{\mathbb{E}}\int_0^T \int_\mathcal{O} \tilde{u}_0 v \mathbbm{1}_{\mathcal{Z}}(\omega,t)\; dxdt, 
$$
where the law of $\tilde{u}_0$ is $\mu$ and $0 \leq \tilde{u}_0 \leq \overline{u}\; \mathbb{P}\text{-a.s}$.

\noindent
Moreover, knowing that $\nabla \Pi_n v \rightarrow \nabla v$  in $L^2(\mathcal{O})^d$ as $n \rightarrow \infty$, and taking $\eqref{tyesofconv}_2$ and \eqref{convlq} into account, we deduce that as $n \rightarrow +\infty$:
$$
\tilde{\mathbb{E}}\int_0^T \mathbbm{1}_{\mathcal{Z}}(\omega,t) \left(\int_{0}^t \int_{\mathcal{O}} a_{\varepsilon_n}(\tilde{u}^n(t))\nabla \tilde{u}^n(t) \cdot \nabla \Pi_n v\; dxds\right) dt \rightarrow \tilde{\mathbb{E}} \int_0^T \mathbbm{1}_{\mathcal{Z}}(\omega,t) \left(\int_{0}^t \int_{\mathcal{O}} a(\tilde{u}(t))\nabla \tilde{u}(t) \cdot \nabla  v\; dxds\right)dt.
$$
By the almost-sure boundedness of $\tilde{u}^n\left(\nabla \mathcal{K} \ast \tilde{u}^n\right)$ and $\eqref{tyesofconv}$, we also deduce that as $n \rightarrow +\infty$:
\begin{align*}
&\tilde{\mathbb{E}}  \int_0^T \mathbbm{1}_{\mathcal{Z}}(\omega,t) \left(\int_{0}^t \int_{\mathcal{O}} \tilde{u}^n(t)\left(\nabla \mathcal{K} \ast \tilde{u}^n(t)\right) \cdot \nabla \Pi_n v  dxds\right)dt\\ 
&\rightarrow  \tilde{\mathbb{E}}  \int_0^T \mathbbm{1}_{\mathcal{Z}}(\omega,t)\left(\int_{0}^t \int_{\mathcal{O}} \tilde{u}(t) \left(\nabla \mathcal{K} \ast \tilde{u}(t) \right) \cdot \nabla v dx ds\right)dt.
\end{align*}
Furthermore, we use the continuity of $f_M$ and $\eqref{convlq}$, along with the fact that $\Pi_n v \rightarrow v$  in $L^2(\mathcal{O})$ as $n \rightarrow \infty$. Additionally, we choose $M>0$ large enough to obtain that $f_M=f$ and as $n \rightarrow +\infty$:
\begin{align*}
\tilde{\mathbb{E}} \int_0^T \mathbbm{1}_{\mathcal{Z}}(\omega,t) \left( \int_{0}^t \int_{\mathcal{O}} f(\tilde{u}^n(t)) \Pi_n v\; dxds\right)dt&=\tilde{\mathbb{E}}  \int_0^T \mathbbm{1}_{\mathcal{Z}}(\omega,t) \left(\int_{0}^t \int_{\mathcal{O}} f_M(\tilde{u}^n(t)) \Pi_n v\; dxds\right)dt \\
&\rightarrow  \tilde{\mathbb{E}}  \int_0^T \mathbbm{1}_{\mathcal{Z}}(\omega,t) \left( \int_{0}^t \int_{\mathcal{O}} f_M(\tilde{u}(t)) v\; dxds \right)dt\\
&=\tilde{\mathbb{E}}  \int_0^T \mathbbm{1}_{\mathcal{Z}}(\omega,t) \left( \int_{0}^t \int_{\mathcal{O}} f(\tilde{u}(t)) v\; dxds\right)dt.
\end{align*}
We now move on to analyze the convergence of the stochastic term separately. On the one hand,  
\begin{align*}
	\int_{0}^T \Vert \sigma^n(\tilde{u}^n(t))-\sigma(\tilde{u}(t)) \Vert_{{L}_2(\mathbb{H} ;L^2(\mathcal{O}))}^2 dt &\leq  2	\int_{0}^T \Vert \sigma(\tilde{u}^n(t))-\sigma(\tilde{u}(t)) \Vert_{{L}_2(\mathbb{H} ;L^2(\mathcal{O}))}^2dt\\
	&+2\int_{0}^T \Vert \sigma^n(\tilde{u}^n(t))-\sigma(\tilde{u}^n(t)) \Vert_{{L}_2(\mathbb{H} ;L^2(\mathcal{O}))}^2dt.
\end{align*}
Clearly, the first term on the right-hand side converges to zero as $n \rightarrow +\infty$, as a  direct consequence of Assumption \ref{a4} and Convergence \eqref{convlq}. Let us now move on to the second term. To this end, we will use Lebesgue dominated convergence theorem. First, we make use of Assumption \ref{a5} and compute to obtain
\begin{align}\label{11}
		2\int_{0}^T \Vert \sigma^n(\tilde{u}^n(t))-\sigma(\tilde{u}^n(t)) \Vert_{{L}_2(\mathbb{H} ;L^2(\mathcal{O}))}^2dt&:=2 \sum_{k \in \mathbb{N}^*} \int_0^T \Vert\sigma_{k}^n (\tilde{u}^n(t))-\sigma_k(\tilde{u}^n(t))\Vert_{L^2(\mathcal{O})}^2 dt \nonumber\\
		&=2 \sum_{k \in \mathbb{N}^*} \int_0^T \Vert \Pi_n(\sigma_{k} (\tilde{u}^n(t)))-\sigma_k(\tilde{u}^n(t))\Vert_{L^2(\mathcal{O})}^2 dt\nonumber\\
		&\leq 4 \sum_{k \in \mathbb{N}^*} \int_0^T \Vert \sigma_k(\tilde{u}^n(t))\Vert_{L^2(\mathcal{O})}^2 dt \nonumber \\
		&\leq 4 \int_0^T \Vert \sigma(\tilde{u}^n(t))\Vert_{L_2(\mathbb{H};L^2(\mathcal{O}))}^2 dt\nonumber\nonumber \\
		&\leq 4C_1 \int_0^T \left(1+\Vert \tilde{u}^n(t)\Vert_{L^2(\mathcal{O})}^2 \right) dt.
\end{align}
Additionally, from Lemma \ref{lemmaa}, the last term in Inequality \eqref{11} can be estimated independently of $n$. Hence, by taking into account the fact that as $n \rightarrow +\infty$:
$$
\Pi_n\left(\sum_{k \in \mathbb{N}^*} \sigma_{k}(\tilde{u}^n(t))\right) \rightarrow \sum_{k \in \mathbb{N}^*} \sigma_{k}(\tilde{u}(t)) \quad \text{ strongly in } L^2(\mathcal{O}),
$$
the dominated convergence theorem allows us to deduce that as $n \rightarrow +\infty$:
$$
2\int_{0}^T \Vert \sigma^n(\tilde{u}^n(t))-\sigma(\tilde{u}^n(t)) \Vert_{{L}_2(\mathbb{H} ;L^2(\mathcal{O}))}^2dt \rightarrow 0 \quad \mathbb{P}\text{-a.s}.
$$
Thus, as $n \rightarrow +\infty$:
\begin{equation}\label{convsigma}
	\int_{0}^T \Vert \sigma^n(\tilde{u}^n(t))-\sigma(\tilde{u}(t)) \Vert_{{L}_2(\mathbb{H} ;L^2(\mathcal{O}))}^2 dt  \rightarrow 0 \quad \mathbb{P}\text{-a.s}.
\end{equation}
By combining Convergences $\eqref{tyesofconv}_5$ and \eqref{convsigma}, and passing to a subsequence if necessary, we use \cite[Lemma 2.1]{debussche2011local} to obtain as $n \rightarrow +\infty$:
\begin{equation}\label{partie1}
\int_0^t \sigma^n(\tilde{u}^n(s)) d\tilde{\mathcal{W}}^n_s \rightarrow  \int_0^t \sigma(\tilde{u}(s)) d\tilde{\mathcal{W}}_s\quad \mathbb{P}\text{-a.s}.
\end{equation}
Now, let $p \in (2,q]$. Then, by BDG inequality and taking Assumption \ref{a5} as well as Condition \eqref{condinit} into account, we obtain
\begin{align}\label{vitalicond}
	\tilde{\mathbb{E}}{\left\|\int_0^t \sigma^n\left(\tilde{u}^n(s)\right) d \tilde{\mathcal{W}}^{n}_s\right\|_{L^2\left(0,T; L^2(\mathcal{O})\right)}^p}& =\tilde{\mathbb{E}}\left(\int_0^T\left\|\sum_{k=1}^n \int_0^t \sigma_{k}^n\left(\tilde{u}^n(s)\right) d \tilde{\mathcal{W}}_{s}^k\right\|_{L^2(\mathcal{O})}^2 d t\right)^{\frac{p}{2}}\nonumber \\
	& \leq {C}_3 \tilde{\mathbb{E}}\left[\sup _{t \in[0, T]}\left\|\sum_{k=1}^n \int_0^t \sigma_{ k}^n\left(\tilde{u}^n(s)\right) d \tilde{\mathcal{W}}_{s}^k\right\|_{L^2(\mathcal{O})}^p\right]  \nonumber \\ 
	& \leq C_3 \tilde{\mathbb{E}}\left[\left(\int_0^T \sum_{k=1}^n\left\|\sigma_{k}^n\left(\tilde{u}^n(t)\right)\right\|_{L^2(\mathcal{O})}^2 d t\right)^{\frac{p}{2}}\right]\nonumber \\
	&<\infty. 
\end{align}
Combining  Convergence \eqref{convsigma}, Estimate \eqref{vitalicond} and Corollary \ref{cor:Lq0-est}, by Vitali's convergence theorem \cite[Theorem 24]{dhariwal2019global}, we deduce that as $n \rightarrow +\infty$: 
\begin{equation}\label{lastconv}
\int_0^t \sigma^n(\tilde{u}^n(s)) d\tilde{\mathcal{W}}^n_s \rightarrow  \int_0^t \sigma(\tilde{u}(s)) d\tilde{\mathcal{W}}_s\quad \text{ strongly in } \mathcal{L}^2(\tilde{\Omega};L^2(0,T;L^2(\mathcal{O}))).
\end{equation}
Consequently, by Convergence \eqref{lastconv} along with the fact that $\Pi_n v \rightarrow v$  in $L^2(\mathcal{O})$ as $n \rightarrow \infty$, we derive that as $n \rightarrow \infty$
$$
\mathbb{E} \int_{0}^T  \mathbbm{1}_{\mathcal{Z}}(\omega,t)  \left(\int_{0}^t  \int_{\mathcal{O}} \sigma^n(\tilde{u}^n(s)) \Pi_nv\; dx d\tilde{\mathcal{W}}^n_s\right)dt \rightarrow \mathbb{E}  \int_{0}^T  \mathbbm{1}_{\mathcal{Z}}(\omega,t) \left(\int_{0}^t \int_{\mathcal{O}} \sigma(\tilde{u}(s)) v\; dx d\tilde{\mathcal{W}}_s\right)dt.
$$
Combining all the previously-established convergences and letting $n \rightarrow \infty$ in Identity \eqref{suitéidntild}, we conclude that  
\begin{align*}
	\mathbb{E} \int_0^T \mathbbm{1}_{\mathcal{Z}}(\omega,t)&\left( \int_\mathcal{O} \tilde{u}(t)v\; dx-\int_\mathcal{O} \tilde{u}_0 v\; dx+ \int_{0}^t \int_{\mathcal{O}} a(\tilde{u}(s))\nabla \tilde{u}(s) \cdot \nabla v\; dxdt-\int_{0}^t \int_{\mathcal{O}} \tilde{u}(s)\left(\nabla \mathcal{K} \ast \tilde{u}(s)\right) \cdot \nabla v\; dxdt \right.\\
	&\left. -\int_{0}^t \int_{\mathcal{O}} f(\tilde{u}(s)) v\; dxdt-\int_{0}^t \int_{\mathcal{O}} \sigma(\tilde{u}(s)) v\; dx d\tilde{\mathcal{W}}_s\right)dt=0, \quad \forall v \in H^1(\mathcal{O}), \; \forall t \in [0,T].
\end{align*}
This concludes the proof.
\end{proof}
\begin{rem}
Proceeding by the exact same techniques used in the proof of Lemma \ref{lemmaa}, one can establish that $(\tilde{u}(t))_{0 \leq t \leq T}$ remains almost surely positive and bounded. 
\end{rem}
\subsection{Uniqueness}
Now that we have addressed the question of existence of weak martingale solutions. We proceed to discuss uniqueness issues. Namely, we address the following question:

\noindent 
\textit{Given two stochastic weak martingale solutions $\left(\tilde{u}_1,\tilde{\mathcal{S}}\right)$ and $\left(\tilde{u}_2,\tilde{\mathcal{S}}\right)$ with a same stochastic basis $\tilde{\mathcal{S}}$ which are obtained as limits of Skorokhod's representations. Do $\tilde{u}_1$ and $\tilde{u}_2$ coincide in a pathwise sense?}

\noindent
The answer to the above question is provided in the following lemma:

\begin{lem}
	Let Assumptions \ref{a1}-\ref{a5} be satisfied and consider two stochastic weak martingale solutions $\left(\tilde{u}_1,\tilde{\mathcal{S}}\right)$ and $\left(\tilde{u}_2,\tilde{\mathcal{S}}\right)$ with the same stochastic basis $\tilde{\mathcal{S}}$, and starting from the same initial condition $\tilde{u}_0 \in \mathcal{L}^2(\tilde{\Omega};L^2(\mathcal{O}))$ given by Convergence $\eqref{tyesofconv}_6$. Then, it holds that
	$$
	\tilde{u}_1=\tilde{u}_2 \; \tilde{\mathbb{P}}\text{-a.s.} \; \text{in } {Q}_T.
	$$
\end{lem}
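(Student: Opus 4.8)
The plan is to run a duality (adjoint) argument adapted to the degenerate stochastic setting, in the spirit of the deterministic proof in \cite{bendahmane2023optimal}. First I would set $w:=\tilde u_1-\tilde u_2$ and subtract the two copies of the weak identity \eqref{idn} satisfied by $\tilde u_1$ and $\tilde u_2$; this is legitimate because both solutions share the stochastic basis $\tilde{\mathcal{S}}$, hence the same cylindrical process $\tilde{\mathcal{W}}$, and start from the same datum $\tilde u_0$. Using the a.s.\ bounds $0\le\tilde u_i\le\overline{u}$ of Lemma \ref{lemmaa}, one may write $A(\tilde u_1)-A(\tilde u_2)=\beta\,w$ with $\beta(\omega,x,t):=\int_0^1 a\bigl(\tilde u_2+\tau(\tilde u_1-\tilde u_2)\bigr)\,d\tau\in[0,\|a\|_{L^\infty(0,\overline{u})}]$, so that, $\tilde{\mathbb{P}}$-a.s.\ and weakly in space,
\[
\begin{aligned}
dw&=\Bigl(\Delta(\beta\,w)-\div\bigl(\tilde u_1\nabla\mathcal{K}\ast\tilde u_1-\tilde u_2\nabla\mathcal{K}\ast\tilde u_2\bigr)+f(\tilde u_1)-f(\tilde u_2)\Bigr)\,dt\\
&\quad+\bigl(\sigma(\tilde u_1)-\sigma(\tilde u_2)\bigr)\,d\tilde{\mathcal{W}}_t,\qquad w(0)=0 .
\end{aligned}
\]
Because the diffusion is two-sidedly degenerate, $\beta$ may vanish, so neither an $L^2(\mathcal{O})$ nor an $H^1(\mathcal{O})^\ast$ estimate for $w$ closes (no coercivity survives in the elliptic part, and the It\^o correction $\|\sigma(\tilde u_1)-\sigma(\tilde u_2)\|^2$ is only controlled in $L^2(\mathcal{O})$, not in a weaker norm); this forces the use of duality.

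I would fix a test function $\theta\in C_c^\infty(\mathcal{O}\times(0,T))$, put $\beta^\varepsilon:=\beta+\varepsilon$ for $\varepsilon>0$ (mollified in $(x,t)$ if needed so that it is smooth and bounded below by $\varepsilon$), and, for $\tilde{\mathbb{P}}$-a.e.\ $\omega$, solve the retrograde uniformly parabolic adjoint problem $-\partial_t\varphi^\varepsilon-\beta^\varepsilon\Delta\varphi^\varepsilon=\theta$ in $Q_T$, $\partial_\eta\varphi^\varepsilon=0$ on $\Sigma_T$, $\varphi^\varepsilon(\cdot,T)=0$, absorbing the first-order and zeroth-order (including nonlocal) linearizations of the transport and reaction terms into the adjoint operator, which remains well-posed of the same type. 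Classical parabolic theory then yields, uniformly in $\varepsilon$, $\|\varphi^\varepsilon\|_{L^\infty(Q_T)}+\|\nabla\varphi^\varepsilon\|_{L^\infty(0,T;L^2(\mathcal{O}))}+\bigl(\int_{Q_T}\beta^\varepsilon|\Delta\varphi^\varepsilon|^2\bigr)^{1/2}\le C(\theta)$, whence the quantified bound $\|\Delta\varphi^\varepsilon\|_{L^2(Q_T)}\le C(\theta)\,\varepsilon^{-1/2}$; this last inequality is the structural engine of the argument.

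Next I would test the equation for $w$ against $\varphi^\varepsilon$, integrate by parts in time, and use the adjoint equation to cancel the principal term: the degenerate contribution $\int_{Q_T}\langle\beta\,w,\Delta\varphi^\varepsilon\rangle$ is matched by $-\int_{Q_T}\langle\beta^\varepsilon w,\Delta\varphi^\varepsilon\rangle$ up to the single remainder $-\varepsilon\int_{Q_T}\langle w,\Delta\varphi^\varepsilon\rangle$, which by Lemma \ref{lemmaa} and the bound above is $O\bigl(\varepsilon\|w\|_{L^2(Q_T)}\|\Delta\varphi^\varepsilon\|_{L^2(Q_T)}\bigr)=O\bigl(C(\theta)\sqrt{\varepsilon}\,\bigr)\to0$. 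The nonlocal transport term is controlled through $\mathcal{K}\in C^2$, Young's convolution inequality and the $L^\infty$ bound on $\tilde u_i$, the reaction term through the Lipschitz character of $f$ on $[0,\overline{u}]$; with these handled by the modified adjoint operator (or, on short time intervals, by Gronwall's inequality and the a priori bound of Lemma \ref{lémmaaprioriés}), and taking expectations (with a random indicator $\mathbbm{1}_{\mathcal{Z}}(\omega,t)$ as in the existence proof), one arrives at an identity for $\tilde{\mathbb{E}}\int_0^T\langle w(t),\theta(t)\rangle\,dt$ whose right-hand side tends to $0$ as $\varepsilon\to0$, with the sole exception of the stochastic contribution.

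That stochastic contribution is the main obstacle: since $\varphi^\varepsilon$ solves a backward problem it is $\tilde{\mathcal{F}}_T$-measurable but not $\{\tilde{\mathcal{F}}_t\}$-adapted, so pairing $\int_0^{\cdot}(\sigma(\tilde u_1)-\sigma(\tilde u_2))\,d\tilde{\mathcal{W}}$ against $\varphi^\varepsilon$ is not a bona fide It\^o integral of vanishing expectation. I would resolve this by letting $\varphi^\varepsilon$ instead solve the associated \emph{backward stochastic} PDE, i.e.\ seeking an adapted pair $(\varphi^\varepsilon,Z^\varepsilon)$ with $-d\varphi^\varepsilon=(\beta^\varepsilon\Delta\varphi^\varepsilon+\theta)\,dt-Z^\varepsilon\,d\tilde{\mathcal{W}}_t$ and $\varphi^\varepsilon(T)=0$; then It\^o's product rule for $\langle w,\varphi^\varepsilon\rangle$ produces a legitimate cross-variation term $\langle Z^\varepsilon,\sigma(\tilde u_1)-\sigma(\tilde u_2)\rangle$, controlled by Assumption \ref{a4} and the Burkholder--Davis--Gundy inequality (Remark \ref{rmbdg}), while all true martingale parts have zero $\tilde{\mathbb{E}}$-mean. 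An equivalent, more hands-on route is to keep the deterministic per-$\omega$ adjoint problem, split $w=\bar w+M$ with $M_t:=\int_0^t(\sigma(\tilde u_1)-\sigma(\tilde u_2))\,d\tilde{\mathcal{W}}_s$, and tame the resulting anticipating term using the weighted bound $\int_{Q_T}\beta^\varepsilon|\Delta\varphi^\varepsilon|^2\le C(\theta)$, the moment condition \eqref{condinit} and Corollary \ref{cor:Lq0-est}. Either way, letting $\varepsilon\to0$ gives $\tilde{\mathbb{E}}\int_0^T\langle w(t),\theta(t)\rangle\,dt=0$ for every $\theta\in C_c^\infty(\mathcal{O}\times(0,T))$ (and, with the random indicator, $\tilde{\mathbb{P}}$-a.s.), hence $w=0$, i.e.\ $\tilde u_1=\tilde u_2$ $\tilde{\mathbb{P}}$-a.s.\ in $Q_T$.
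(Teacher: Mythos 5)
Your strategy is genuinely different from the paper's. The paper does not solve a backward parabolic adjoint problem at all: it tests the equation for $U:=\tilde u_1-\tilde u_2$ against the solution $w(\omega,t,\cdot)$ of the \emph{elliptic} Neumann problem $-\Delta w=U(\omega,t,\cdot)$, $\int_{\mathcal O}w\,dx=0$, so that $\int_{\mathcal O}U(t)\,w(t)\,dx=\|\nabla w(t)\|_{L^2(\mathcal O)^d}^2$ is an $H^1(\mathcal O)^*$-type norm of $U(t)$. The monotonicity of $A$ gives the sign of the principal term via $-\int(A(\tilde u_1)-A(\tilde u_2))\,U\le 0$, the transport, reaction and It\^o-correction terms are absorbed by Young and Poincar\'e--Wirtinger, and Gr\"onwall closes in $t\mapsto\tilde{\mathbb E}\|\nabla w(t)\|^2$. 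The decisive structural advantage of this choice is that $w(t)=(-\Delta_N)^{-1}U(t)$ is an \emph{instantaneous, adapted} functional of the solution, so the stochastic integral remains a true martingale with vanishing expectation; no anticipating integrand ever appears.

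This is exactly where your proposal has a genuine gap. You correctly identify that the backward (Holmgren--Oleinik) adjoint $\varphi^\varepsilon$ is $\tilde{\mathcal F}_T$-measurable rather than adapted, but neither of your two fixes closes the argument as written. In the BSPDE route, the It\^o product rule produces the cross-variation term $\tilde{\mathbb E}\int_0^T\langle Z^\varepsilon,\sigma(\tilde u_1)-\sigma(\tilde u_2)\rangle\,dt$; Assumption \ref{a4} only bounds $\|\sigma(\tilde u_1)-\sigma(\tilde u_2)\|_{L_2(\mathbb H;L^2(\mathcal O))}$ by $L\|U\|_{L^2(\mathcal O)}$, so this term is of size $\|Z^\varepsilon\|\,\|U\|_{L^2(Q_T)}$ — but the duality pairing only ever yields the quantity $\tilde{\mathbb E}\int_0^T\langle U,\theta\rangle\,dt$, never a norm of $U$ that could absorb $\|U\|_{L^2(Q_T)}$ by Gr\"onwall. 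Since the whole point of resorting to duality was that no $L^2$ or $H^1(\mathcal O)^*$ estimate on $U$ closes directly (as you yourself note in your second sentence), the argument is circular at this step: you would need the noise difference controlled in a norm compatible with the degenerate weight $\beta^\varepsilon$, which \ref{a4} does not provide. The ``more hands-on'' splitting $w=\bar w+M$ leaves an anticipating pairing of $M$ against $\Delta\varphi^\varepsilon$ that is likewise not estimated. Secondary issues compound this: the coefficient $\beta^\varepsilon(\omega,x,t)$ is only measurable and adapted (it depends on the unknown solutions), and mollifying it in time to gain regularity destroys adaptedness, so even the solvability of your adjoint BSPDE is not immediate; and the claimed uniform bound $\int_{Q_T}\beta^\varepsilon|\Delta\varphi^\varepsilon|^2\le C(\theta)$ for the source formulation produces, after testing with $\Delta\varphi^\varepsilon$, a term $\int_{Q_T}\theta^2/\beta^\varepsilon$ that is not uniformly bounded in $\varepsilon$ unless $\theta$ is supported where $\beta$ is bounded below. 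I would recommend abandoning the backward adjoint in favor of the elliptic dual test function, which dissolves the adaptedness problem and reduces the proof to a one-line Gr\"onwall argument in the $H^1(\mathcal O)^*$ norm.
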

\begin{proof}
Consider the stochastic process $U:=\tilde{u}_1-\tilde{u}_2$. Then, clearly, $\forall t \in [0,T]:$
\begin{equation}\label{diff}
	\begin{split}
		\int_\mathcal{O} U(t)v\; dx&=- \int_{0}^t \int_{\mathcal{O}} \left(\nabla A(\tilde{u}_1)-\nabla A(\tilde{u}_2)\right) \cdot \nabla v\; dxdt\\
		&+\int_{0}^t \int_{\mathcal{O}} \left(\tilde{u}_1(t)\left(\nabla \mathcal{K} \ast \tilde{u}_1(t)\right)-\tilde{u}_2(t)\left(\nabla \mathcal{K} \ast \tilde{u}_2(t)\right)\right) \cdot \nabla v\; dxdt\\
		&+\int_{0}^t \int_{\mathcal{O}} \left(f(\tilde{u}_1(t))-f(\tilde{u}_1(t))\right)v\; dxdt\\
		&+\int_{0}^t \int_{\mathcal{O}} \left(\sigma(\tilde{u}_1(t))-\sigma(\tilde{u}_2(t))\right)v\; dx d\tilde{\mathcal{W}}_s, \quad \forall v \in H^1(\mathcal{O}). \; 
	\end{split}
\end{equation}
The key idea is to take computational advantages of the duality between $U$ and an adequately-chosen function $w \in H^2(\mathcal{O})$. To this end, let $(\omega,t) \in \tilde{\Omega}  \times (0,T)$ and consider the following elliptic problem:
\begin{equation}\label{élliptic}
	\begin{cases}
	\begin{split}
		&-\Delta w=U(\omega,t,.), \quad &\text{ in } \mathcal{O},\\
		&\nabla w \cdot \overrightarrow{\textbf{n}}=0, &\text{ on } \partial \mathcal{O}. 
	\end{split}
	\end{cases}
\end{equation}
Since $U(\omega,t.) \in L^\infty(\mathcal{O})$. Then, by the standard elliptic theory, Problem \eqref{élliptic} has a unique solution $w$ satisfying the following regularity:
$$
w(\omega,.,.) \in C([0,T];H^2(\mathcal{O})) \text{ and } \int_{\mathcal{O}} w(\omega,t,.) dx=0.$$
On the one hand, the use of Problem \eqref{élliptic} and Green's formula lead to the identity
\begin{equation}\label{dualidén}
\int_\mathcal{O} U(t)w(\omega,t,.)\; dx=\int_{\mathcal{O}} \vert \nabla w(\omega,.,t) \vert^2 dx.
\end{equation}
On the other hand, from Identity \eqref{diff} with $v=w(\omega,t,.)$ and Problem \eqref{élliptic}, we use Green's formula, Cauchy-Schwartz and Young's inequalities while keeping in mind the almost-sure essential boundedness of $\tilde{u}_1$ and $\tilde{u}_2$, Assumptions \ref{a1}--\ref{a3} and the local Lipschitz property of $f$ and $A$ to obtain 
\begin{align*}
		\tilde{\mathbb{E}} \int_\mathcal{O} U(t)w(\omega,t,.)\; dx &\leq -C_{31} \tilde{\mathbb{E}} \int_0^t \int_{\mathcal{O}} \vert U(t)\vert^2 dx dt+2\tilde{\zeta} \tilde{\mathbb{E}} \int_0^t \int_\mathcal{O}  \vert U(t)\vert^2 dx dt+\dfrac{1}{4 \tilde{\zeta}} \tilde{\mathbb{E}} \int_0^t \int_\mathcal{O} \vert \nabla w(\omega,t,.) \vert^2 dxdt\\
		&+\dfrac{1}{4 \tilde{\zeta}} \tilde{\mathbb{E}}\int_0^t \int_{\mathcal{O}} \vert w(\omega,t,.) \vert^2 dxdt, \quad \forall \tilde{\zeta}>0.
\end{align*}

\noindent
Hence, by choosing $0<\tilde{\zeta}<\dfrac{C_{31}}{2}$ and by applying Poincaré-Wirtinger inequality to $w(\omega,t,.)$, we derive
\begin{align*}
\tilde{\mathbb{E}}	\int_{\mathcal{O}} U(t) w(\omega,t,.) dx 
&\leq C_{32} \tilde{\mathbb{E}} \int_0^t \int_{\mathcal{O}} \vert \nabla w(\omega,t,.) \vert^2 dx.
\end{align*}

\noindent
Hence, from Identity \eqref{dualidén}, we acquire the following inequality:
\begin{align*}
	\tilde{\mathbb{E}}	\int_{\mathcal{O}} \vert \nabla w(\omega,.,t) \vert^2 dx 
	&\leq C_{31} \tilde{\mathbb{E}} \int_0^t \int_{\mathcal{O}} \vert \nabla w(\omega,t,.) \vert^2 dx.
\end{align*}
Thereby, by recalling that $\nabla w (\omega,.,0)=U(\omega,.,0)=0 \text{ in } \mathcal{O}$, a direct application of Gronwall's inequality leads to
	$$
\nabla w=0 \; \tilde{\mathbb{P}}\text{-a.s.} \; \text{ in } Q_T.
$$
Consequently, multiplying  $\eqref{élliptic}_1$ by $U,$ integrating over $\mathcal{O}$ and using Green's formula yield the desired result.
\end{proof}
\section{A numerical illustration}
\label{section8}
The aim of this section is to give graphical illustrations of Model \eqref{basicstoch} in order to show effect of stochastic Gaussian noise on biological aggregation. To this end, we consider the fixed spatial domain
\[
\mathcal{O}=(-4,4)^2
\]
and the fixed time horizon
\[
(0,T)=(0,12)
\]
and starts from an initial state given by 
\begin{align*}
	u_0(x,y)&=2\exp(-((x+1)^2 +x^2))+1.5\exp(-(x^2+(y-1)^2))\\
	&+2\exp(-((x-1.5)^2+(y+ 1)^2)), \quad \forall (x,y) \in (-4,4)^2.
\end{align*}
Additionally, we consider the following two-sidedly degenerate density-dependent diffusion rate:
$$
a(u):=u(\overline{u}-u), \quad \forall u \in \mathbb{R}^+,
$$
where $\overline{u}>0$ is large enough such that 
$$\Vert u_0 \Vert_{L^\infty((-4,4)^2)} < \overline{u}.$$
As for the aggregation kernel, we consider
$$
\mathcal{K}(x,y)=\dfrac{1}{\displaystyle \int_{\mathbb{R}} \int_{\mathbb{R}} \exp(-\dfrac{x^2+y^2}{2}) dx dy}\exp(-\dfrac{x^2+y^2}{2}), \quad \forall (x,y)\in \mathbb{R}^2. 
$$
\noindent
Now, we introduce a uniform spatial mesh on \(\mathcal{O}\):
\[
x_i = -4 + i\Delta x, \quad i=0,\dots,N_x,
\qquad
y_j = -4 + j\Delta y, \quad j=0,\dots,N_y,
\]
where
\[
\Delta x = \frac{8}{N_x}, \qquad \Delta y = \frac{8}{N_y}.
\]
The temporal grid is
\[
t^n = n\Delta t, \quad n=0,\dots,N_T, 
\qquad \Delta t = \frac{12}{N_T}.
\]

\noindent
The fully discrete scheme (finite differences in space, extended Milstein in time \cite{hu2022stability}) is:

\begin{equation}\label{discretesystem}
	\begin{cases}
		\begin{aligned}
			u_{i,j}^{\,n+1} &= u_{i,j}^{\,n}
			+ \Delta t\Big[(L_h(u^{n}))_{i,j} + f\!\big(u_{i,j}^{\,n}\big)\Big]
			+ \sigma\!\big(u_{i,j}^{\,n}\big)\,\Delta W_{i,j}^{\,n} \\[1ex]
			&\quad + \tfrac12\,\sigma\!\big(u_{i,j}^{\,n}\big)\,\sigma'\!\big(u_{i,j}^{\,n}\big)
			\left[\big(\Delta W_{i,j}^{\,n}\big)^2 - \Delta t\right],
		\end{aligned} \\[4ex]
		
		\begin{aligned}
			(L_h(u))_{i,j}
			&= \frac{F^{x}_{i+\tfrac12,j}-F^{x}_{i-\tfrac12,j}}{\Delta x}
			+ \frac{F^{y}_{i,\,j+\tfrac12}-F^{y}_{i,\,j-\tfrac12}}{\Delta y},
		\end{aligned} \\[4ex]
		
		\begin{aligned}
			F^{x}_{i+\tfrac12,j} &= a_{i+\tfrac12,j}\,\frac{u_{i+1,j}-u_{i,j}}{\Delta x}
			- u_{i+\tfrac12,j}\,\frac{v_{i+1,j}-v_{i,j}}{\Delta x}, \\[1ex]
			F^{y}_{i,\,j+\tfrac12} &= a_{i,\,j+\tfrac12}\,\frac{u_{i,j+1}-u_{i,j}}{\Delta y}
			- u_{i,\,j+\tfrac12}\,\frac{v_{i,j+1}-v_{i,j}}{\Delta y},
		\end{aligned} \\[4ex]
		
		\begin{aligned}
			u_{i+\tfrac12,j} &= \tfrac12\big(u_{i+1,j}+u_{i,j}\big), \qquad
			u_{i,\,j+\tfrac12} = \tfrac12\big(u_{i,j+1}+u_{i,j}\big), \\[1ex]
			a_{i+\tfrac12,j} &= \tfrac12\big(a(u_{i+1,j})+a(u_{i,j})\big), \qquad
			a_{i,\,j+\tfrac12} = \tfrac12\big(a(u_{i,j+1})+a(u_{i,j})\big),
		\end{aligned} \\[4ex]
		
		\begin{aligned}
			v_{i,j}^{\,n} &= \sum_{p=0}^{N_x}\sum_{q=0}^{N_y}
			\mathcal{K}_{\,i-p,\,j-q}\,u_{p,q}^{\,n}\,\Delta x\,\Delta y,
		\end{aligned} \\[4ex]
		
		\begin{aligned}
			F^{x}_{-\tfrac12,j} &= F^{x}_{N_x+\tfrac12,j}=0, \qquad
			F^{y}_{i,-\tfrac12}=F^{y}_{i,N_y+\tfrac12}=0,
		\end{aligned} \\[4ex]
		
		\begin{aligned}
			\text{and the discrete initial condition:}\qquad
			u_{i,j}^0 &= u_0(x_i,y_j), \quad i=0,\dots,N_x,\; j=0,\dots,N_y,
		\end{aligned}
	\end{cases}
\end{equation}
with the following discrete notations:
\begin{itemize}
	\item[$\bullet$] \(u_{i,j}^n \approx u(x_i,y_j,t^n)\) denotes the numerical approximation at node \((x_i,y_j)\) and time \(t^n\).
	\item[$\bullet$] \(u_{i,j}^0 = u_0(x_i,y_j)\) is the discrete initial condition.
	\item[$\bullet$] \(\Delta W_{i,j}^n\) are independent Gaussian increments with
	\(\mathbb{E}[\Delta W_{i,j}^n]=0,\; \mathbb{E}[(\Delta W_{i,j}^n)^2]=\Delta t.\)
	\item[$\bullet$] \(v_{i,j}^n\) denotes the discrete convolution \((\mathcal{K}*u^n)(x_i,y_j)\).
\end{itemize}

\noindent

\noindent
 Based on the obtained numerical results, we can assert the following interpretations:
\subsection{Aggregation without environmental noise}
Figure \ref{fig1} illustrates the dynamics of aggregation in the absence of external perturbations. Starting from two distinct high-density clusters, a merging process occurs due to forces of attraction and diffusion. At $t=4$, the clusters begin to attract each other, and by $t=8$, they merge into a single larger aggregate. By $t=12$, the system stabilizes into a diffused yet cohesive configuration. This progression is a hallmark of natural aggregation phenomena, such as cellular chemotaxis or protein clustering, driven purely by intrinsic attraction forces and unaffected by external constraints.
\subsection{Aggregation with proportional-shifted environmental noise}
Figure \ref{fig2} presents aggregation dynamics in the presence of proportional-shifted environmental noise, showcasing a contrast to the smooth merging process seen in Fig. \ref{fig1}. Although both begin with the same initial clusters, the bounded noise causes a rapid transition to a concentrated, uniform high-density region with sharp edges. This evolution, apparent from $t=0$ to $t=12$, reveals how population constraints and external perturbations reshape the dynamics, suppressing diffusion and maintaining distinct boundaries. Such patterns resemble biological processes like bacterial biofilm formation or cellular aggregation in constrained environments, where density and resource limitations create unique aggregation behaviors. One can also observe from Fig. \ref{fig6} the effect of proportional-shifted environmental noise on the total mass  density.
\subsection{Aggregation with periodic environmental noise}
Figure \ref{fig3} explores aggregation in the presence of periodic environmental noise, resulting in a fundamentally different evolution compared to the smooth dynamics of Fig. \ref{fig1}. Starting from identical initial clusters, periodic perturbations introduce a dynamic reorganization that disrupts clean merging. From $t=0$ to $t=12$, the aggregate maintains overall cohesion while continuously adapting its internal structure. This behavior contrasts with the uniform merging of the noise-free case and mirrors biological systems like cell clusters under mechanical stress or bacterial colonies in oscillating environments, where structures must balance collective integrity with adaptability to external fluctuations. One can also observe from Fig. \ref{fig6} the effect of periodic environmental noise, in comparison to proportional-shifted environmental noise, on the total mass  density.
\subsection{Effect of a stochastically-perturbed initial condition on aggregation}
The comparison of Figs. \ref{fig2}-\ref{fig5} reveals key insights into how our stochastic aggregation-diffusion system behaves under different conditions. Indeed, Fig. \ref{fig2} demonstrates classical aggregation behavior, characterized by well-defined boundaries and uniform interior density. When stochastic initial conditions are introduced (Fig. \ref{fig4}), the system maintains its aggregative nature but develops internal heterogeneity while preserving overall connectivity. The transition to sine-based noise yields notably different patterns. Figure \ref{fig3}  (deterministic initial conditions) shows partial fragmentation, while Fig. \ref{fig5} (stochastic initial conditions) exhibits enhanced cluster separation and resistance to complete aggregation. On the other hand, Fig. \ref{fig7} provides an illustration of the effect of a stochastically-perturbed initial condition on the total mass density.
\begin{figure}[H]
	\hspace*{-2.6cm}
	\includegraphics[width=22cm,height=8.5cm]{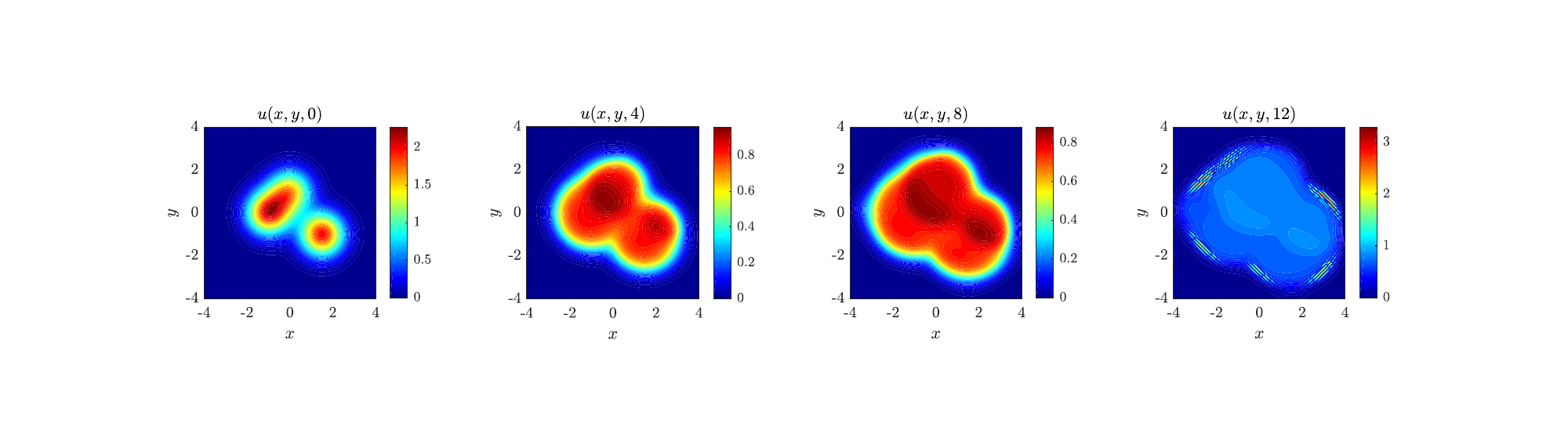}
	\caption{{Evolution of the density $u$ at four points in time: $t=0$, $t=4$, $t=8$ and $t=12$ with $\alpha=0.4$ and $\mu=0.5$ in the absence of stochastic noise $(\sigma \equiv 0)$}.}
	\label{fig1}
\end{figure}
\vspace*{-0.6cm}
\begin{figure}[H]
	\hspace*{-2.6cm}
	\includegraphics[width=22cm,height=8.5cm]{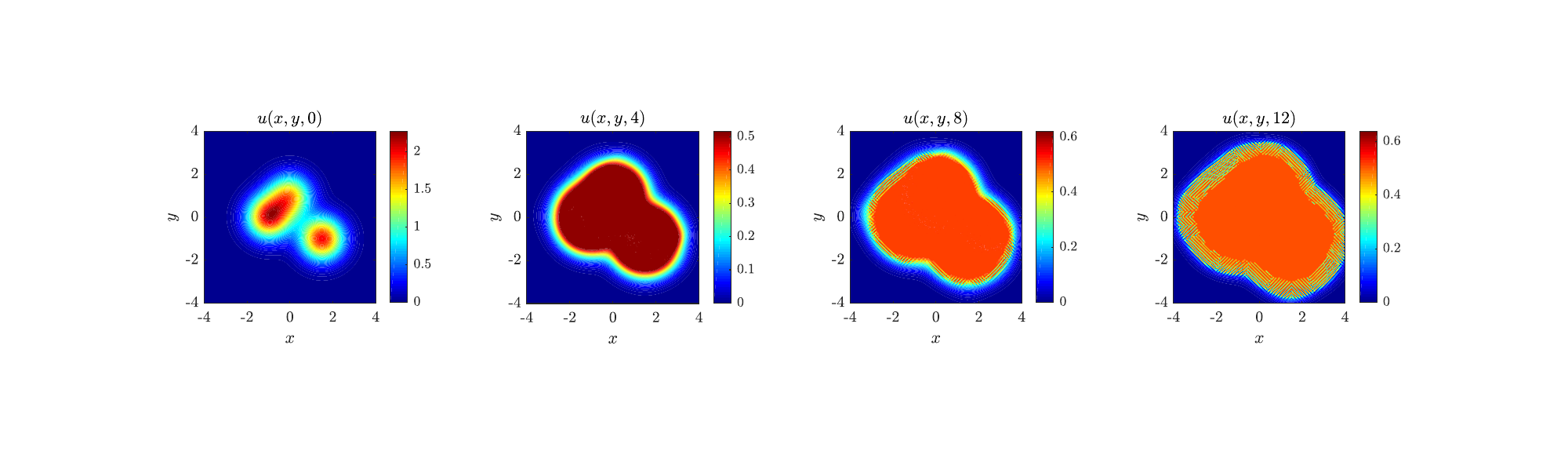}
	\caption{{Evolution of the density $u$ at four points in time: $t=0$, $t=4$, $t=8$ and $t=12$, with $\alpha=0.4$ and $\mu=0.5$ in the presence of the stochastic noise $\sigma(u)=1.2 \min\{u,\overline{u}-u\}$}.}
		\label{fig2}
\end{figure}
\vspace*{-0.6cm}
\begin{figure}[H]
	\hspace*{-2.6cm}
	\includegraphics[width=22cm,height=9cm]{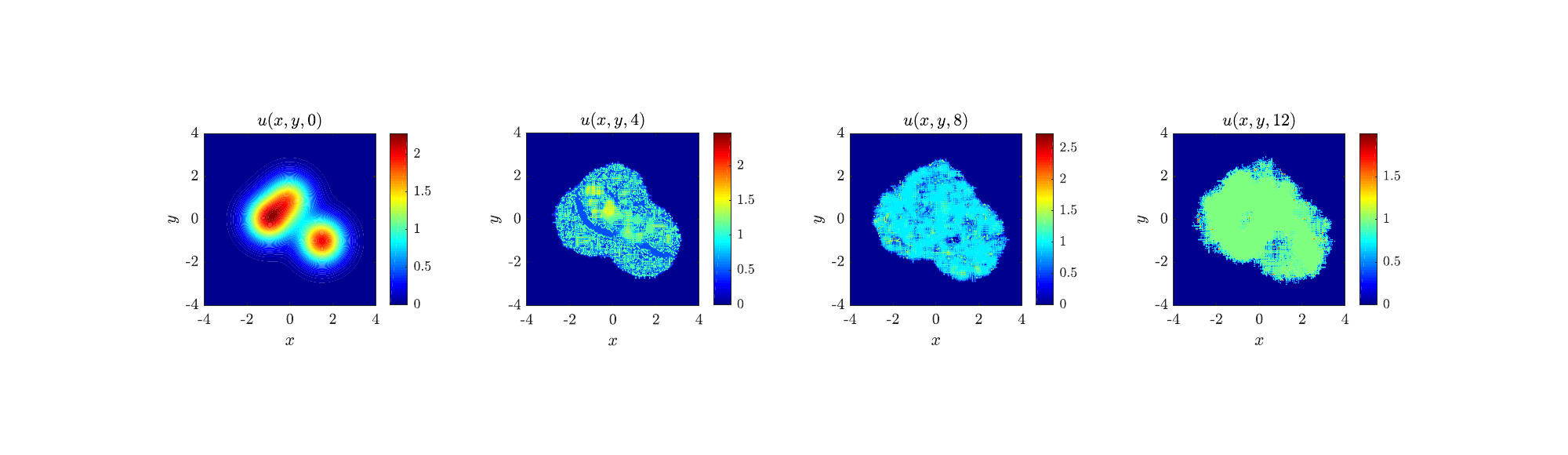}
	\caption{{Evolution of the density $u$ at four points in time: $t=0$, $t=4$, $t=8$ and $t=12$, with $\alpha=0.4$ and $\mu=0.5$ in the presence of the stochastic noise $\sigma(u)=1.2 \sin(\dfrac{\pi u}{\overline{u}})$}.}
		\label{fig3}
\end{figure}
\begin{figure}[H]
	\hspace*{-2.6cm}
	\includegraphics[width=22cm,height=9cm]{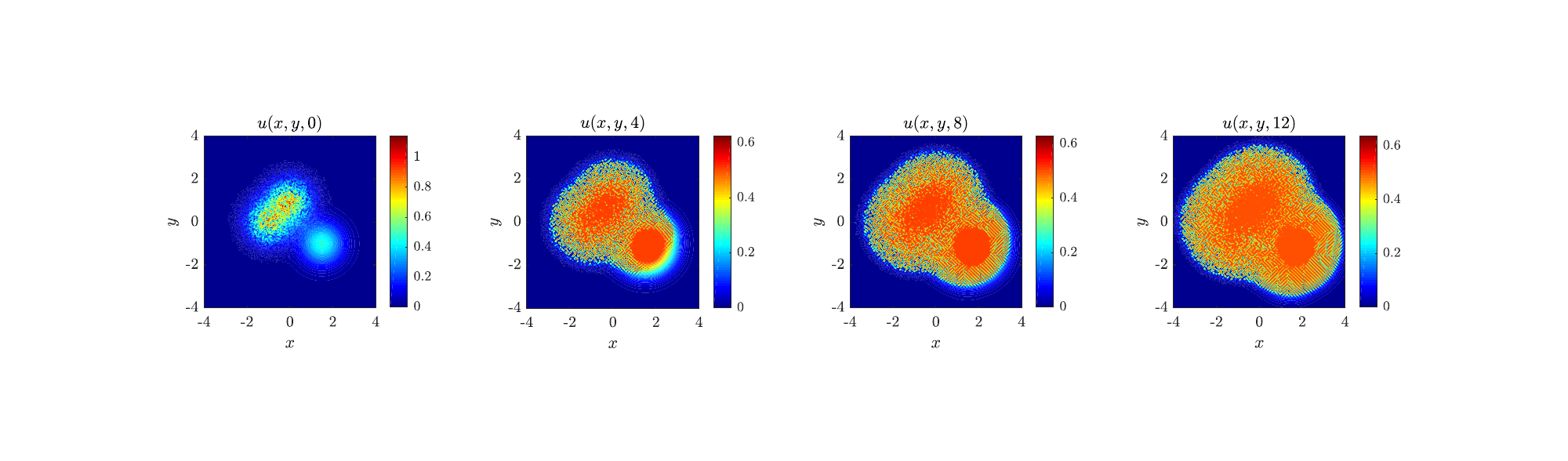}
	\caption{{Evolution of the density $u$ at four points in time: $t=0$, $t=4$, $t=8$ and $t=12$, with $\alpha=0.4$ and $\mu=0.5$ in the presence of the stochastic noise $\sigma(u)=1.2 \min\{u,\overline{u}-u\}$} for a stochastically-perturbed initial condition $u_0$.}
	\label{fig4}
\end{figure}
\begin{figure}[H]
	\hspace*{-2.6cm}
	\includegraphics[width=22cm,height=9cm]{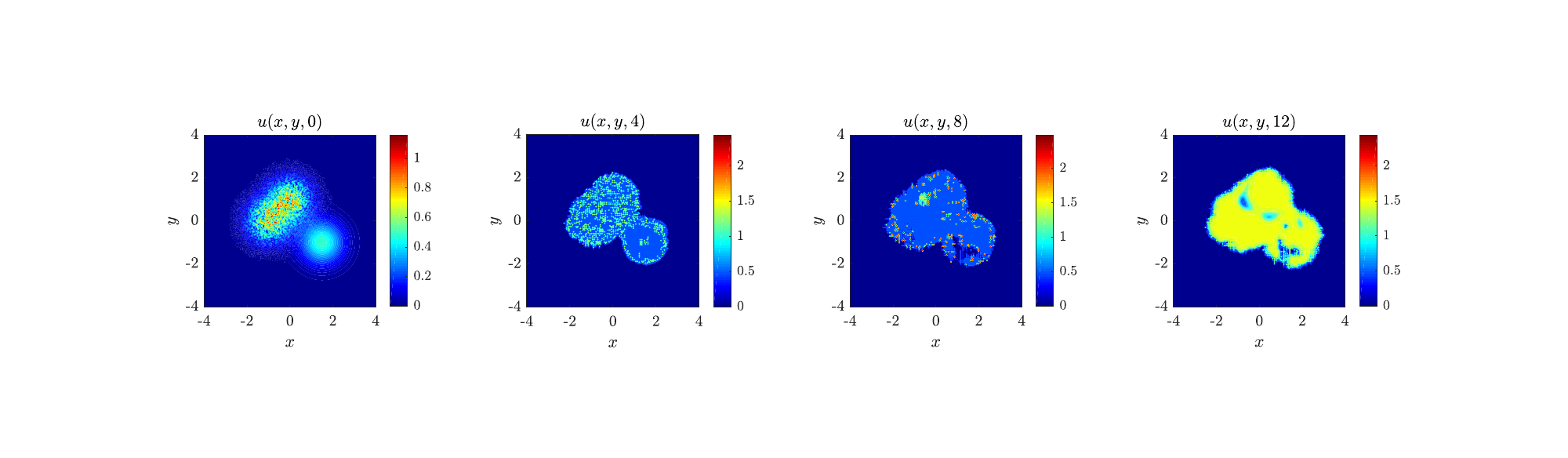}
	\caption{{Evolution of the density $u$ at four points in time: $t=0$, $t=4$, $t=8$ and $t=12$, with $\alpha=0.4$ and $\mu=0.5$ in the presence of the stochastic noise $\sigma(u)=1.2 \sin(\dfrac{\pi u}{\overline{u}})$ for a stochastically-perturbed initial condition $u_0$}.}
	\label{fig5}
\end{figure}
\begin{figure}[H]
	\centering
	\includegraphics[width=12cm,height=10cm]{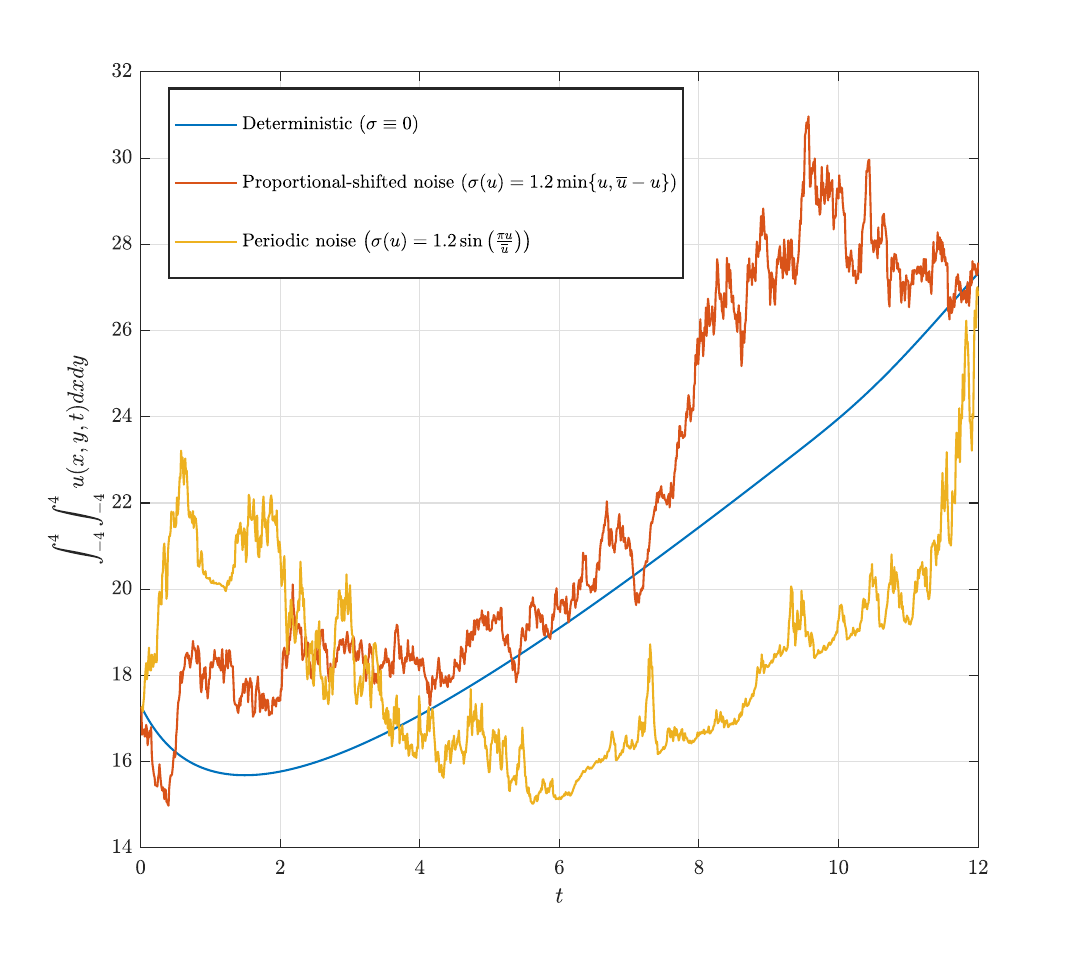}
	\caption{{Evolution of the total mass of $u$ with respect to time in the deterministic and stochastic cases with $\alpha=0.4$, $\mu=0.5$  for a deterministic initial condition $u_0$}.}
	\label{fig6}
\end{figure}
\begin{figure}[H]
	\centering
	\includegraphics[width=12cm,height=10cm]{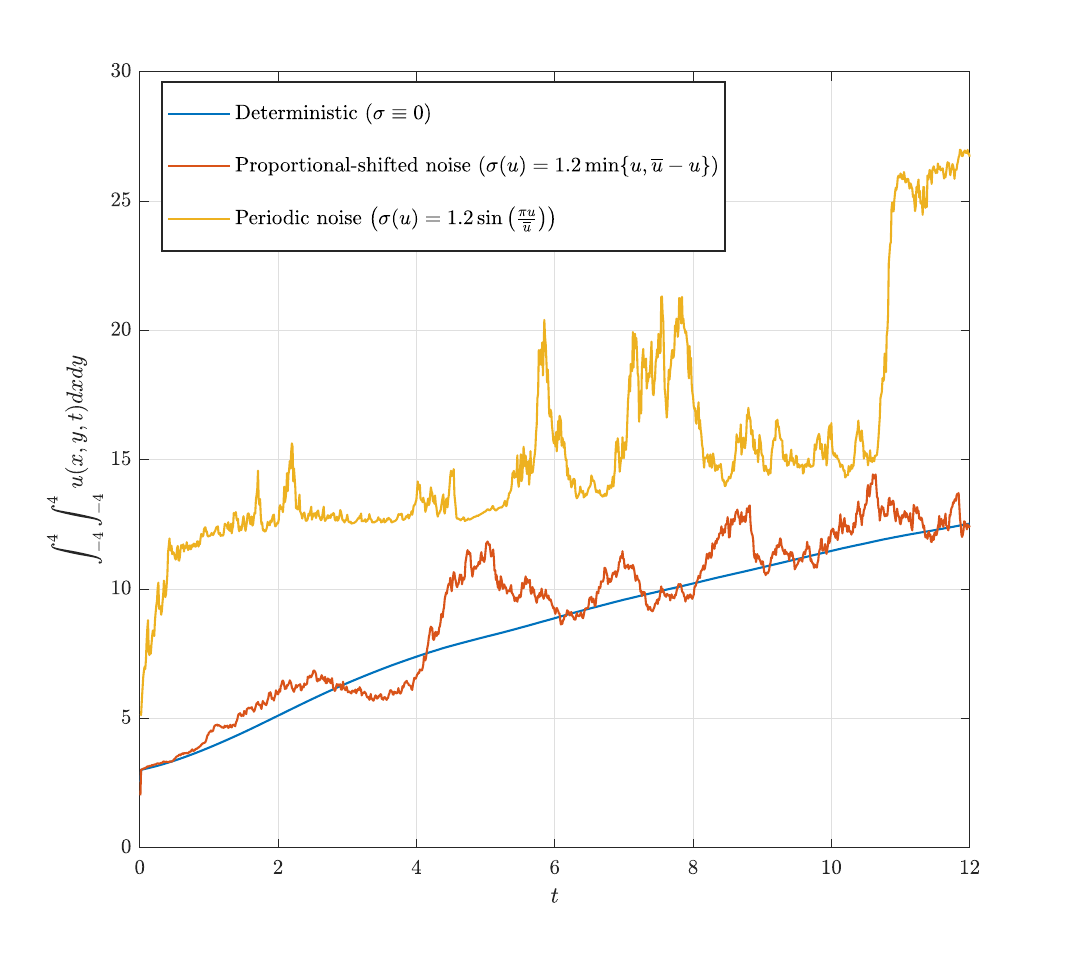}
	\caption{{Evolution of the total mass of $u$ with respect to time in the deterministic and stochastic cases with $\alpha=0.4$ and $\mu=0.5$ for a stochastically-perturbed initial condition $u_0$}.}
	\label{fig7}
\end{figure}
\section{Conclusion and future work}
\label{section9}
In this paper, we have presented new results on the well-posedness of aggregation-diffusion equations, particularly in the less-explored stochastic setting. The considered equation featured locally Lipschitz reaction terms incorporating growth and competition, a two-sidedly degenerate density-dependent diffusion rate accounting for the volume-filling effect, and a stochastic term representing environmental perturbations. Our approach differed from earlier contributions in the deterministic \cite{bendahmane2023optimal} and stochastic \cite{tang2024strong} cases, by introducing a novel methodology to establish well-posedness. Specifically, we first derived uniform a priori estimates and ensured almost-sure positiveness and boundedness for a perturbed non-degenerate version of the model. Subsequently, Prokhorov's compactness theorem and Skorokhod's representation theorem were employed to pass to the weak limit in the perturbed system, yielding the desired solution. Additionally, we addressed the uniqueness of the solution in a particular case based on a duality technique.

The numerical results highlighted that Gaussian noise introduces additional spatial complexity without disrupting  the system's fundamental basic aggregative tendencies.  However, the influence of Lévy noise, which models large-scale environmental perturbations \cite{mehdaoui2024well}, remains unclear and could potentially introduce significant variability or alter convergence. Meanwhile, the observed convergence toward steady-state configurations (Figs.\;\ref{fig3}-\ref{fig5}) underscored the importance of stability and asymptotic behavior in the considered equation. Thus, understanding the interplay between complex noise structures, such as Lévy noise, and aggregation mechanisms, as well as rigorously analyzing the long-term behavior and stability of solutions, remains worthy of further investigation. These topics will be subjects of future work.
\section*{Statements and Declarations}
\subsection*{Acknowledgments}
M. Mehdaoui gratefully acknowledges the support of the French Embassy in Morocco and the Moroccan Center for Scientific and Technical Research (CNRST) for funding his three-month research visit to the Institut de Mathématiques de Bordeaux (IMB), Université de Bordeaux, France. He also expresses his sincere gratitude to the IMB for their warm hospitality and to the Euromed University of Fez (UEMF) for their valuable logistical support.
\subsection*{Data availability}
Not applicable.
\subsection*{Funding}
Not applicable.
\subsection*{Conflict of interest}
The authors declare that they have no conflicts of interest.

\bibliographystyle{abbrvurl}
\bibliography{BMT_WEAK_MARTINGALE}

\begin{thebibliography}{10}

\bibitem{adams2003sobolev}
R.~A. Adams and J.~J. Fournier.
\newblock {\em Sobolev spaces}.
\newblock Elsevier, 2003.

\bibitem{ainseba2008reaction}
B.~Ainseba, M.~Bendahmane, and A.~Noussair.
\newblock A reaction--diffusion system modeling predator--prey with prey-taxis.
\newblock {\em Nonlinear Anal. Real World Appl.}, 9(5):2086--2105, 2008.

\bibitem{bendahmane2023optimal}
M.~Bendahmane, F.~Karami, E.~Erraji, A.~Atlas, and L.~Afraites.
\newblock Optimal control for a two-sidedly degenerate aggregation equation.
\newblock {\em Nonlinear Anal. Model. Control}, 2023.

\bibitem{bendahmane2022martingale}
M.~Bendahmane and K.~A.~H. Karlsen.
\newblock Martingale solutions of stochastic nonlocal cross-diffusion systems.
\newblock {\em Netw. Heterog. Media}, 17(5), 2022.

\bibitem{bendahmane2007two}
M.~Bendahmane, K.~H. Karlsen, and J.~M. Urbano.
\newblock On a two-sidedly degenerate chemotaxis model with volume-filling
  effect.
\newblock {\em Math. Models Methods Appl. Sci.}, 17(05):783--804, 2007.

\bibitem{bendahmane2023stochastic}
M.~Bendahmane, H.~Nzeti, J.~Tagoudjeu, and M.~Zagour.
\newblock Stochastic reaction--diffusion system modeling predator--prey
  interactions with prey-taxis and noises.
\newblock {\em Chaos}, 33(7), 2023.

\bibitem{bertozzi2009blow}
A.~L. Bertozzi, J.~A. Carrillo, and T.~Laurent.
\newblock Blow-up in multidimensional aggregation equations with mildly
  singular interaction kernels.
\newblock {\em Nonlinearity}, 22(3):683, 2009.

\bibitem{bodnar2006integro}
M.~Bodnar and J.~J.~L. Vel{\'a}zquez.
\newblock An integro-differential equation arising as a limit of individual
  cell-based models.
\newblock {\em J. Differential Equations}, 222(2):341--380, 2006.

\bibitem{burger2008large}
M.~Burger and M.~Di~Francesco.
\newblock Large time behavior of nonlocal aggregation models with nonlinear
  diffusion.
\newblock {\em Netw. Heterog. Media}, 3(4):749--785, 2008.

\bibitem{carrillo2019aggregation}
J.~A. Carrillo, K.~Craig, and Y.~Yao.
\newblock Aggregation-diffusion equations: dynamics, asymptotics, and singular
  limits.
\newblock {\em Active Part.}, 2:65--108, 2019.

\bibitem{chekroun2016stampacchia}
M.~D. Chekroun, E.~Park, and R.~Temam.
\newblock The stampacchia maximum principle for stochastic partial differential
  equations and applications.
\newblock {\em J. Differential Equations}, 260(3):2926--2972, 2016.

\bibitem{collins2012optimal}
C.~Collins, S.~Lenhart, S.~Nanda, J.~Xiong, K.~Yakovlev, and J.~Yong.
\newblock Optimal control of harvesting in a stochastic metapopulation model.
\newblock {\em Optim. Control Appl. Methods}, 33(2):127--142, 2012.

\bibitem{da2014stochastic}
G.~Da~Prato and J.~Zabczyk.
\newblock {\em Stochastic equations in infinite dimensions}, volume 152.
\newblock Cambridge university press, 2014.

\bibitem{debussche2011local}
A.~Debussche, N.~Glatt-Holtz, and R.~Temam.
\newblock Local martingale and pathwise solutions for an abstract fluids model.
\newblock {\em Phys. D}, 240(14-15):1123--1144, 2011.

\bibitem{dhariwal2019global}
G.~Dhariwal, A.~J{\"u}ngel, and N.~Zamponi.
\newblock Global martingale solutions for a stochastic population
  cross-diffusion system.
\newblock {\em Stochastic Process. Appl.}, 129(10):3792--3820, 2019.

\bibitem{di2024kermack}
M.~Di~Francesco and F.~G. Zefreh.
\newblock Kermack-mckendrick type models for epidemics with nonlocal
  aggregation terms.
\newblock {\em arXiv preprint arXiv:2410.04947}, 2024.

\bibitem{garcke2016global}
H.~Garcke and K.~F. Lam.
\newblock Global weak solutions and asymptotic limits of a
  {Cahn--Hilliard--Darcy} system modelling tumour growth.
\newblock {\em AIMS Math.}, 1(3), 2016.

\bibitem{gomez2024beginner}
D.~G{\'o}mez-Castro.
\newblock Beginner's guide to aggregation-diffusion equations.
\newblock {\em SEMA J.}, pages 1--57, 2024.

\bibitem{gray2011stochastic}
A.~Gray, D.~Greenhalgh, L.~Hu, X.~Mao, and J.~Pan.
\newblock A stochastic differential equation {SIS} epidemic model.
\newblock {\em SIAM J. Appl. Math.}, 71(3):876--902, 2011.

\bibitem{gurtin1977diffusion}
M.~E. Gurtin and R.~C. MacCamy.
\newblock On the diffusion of biological populations.
\newblock {\em Math. Biosci.}, 33(1-2):35--49, 1977.

\bibitem{hu2022stability}
J.~Hu, Q.~Zhang, A.~Meyer-Baese, and M.~Ye.
\newblock Stability in distribution for a stochastic alzheimer’s disease
  model with reaction diffusion.
\newblock {\em Nonlinear Dyn.}, 108(4):4243--4260, 2022.

\bibitem{krylov1999kolmogorov}
N.~A. Krylov, J.~Zabczyk, M.~R{\"o}ckner, and N.~Krylov.
\newblock On kolmogorov's equations for finite dimensional diffusions.
\newblock {\em Stochastic PDE's and Kolmogorov equations}, pages 1--63, 1999.

\bibitem{laurenccot2005chemotaxis}
P.~Lauren{\c{c}}ot and D.~Wrzosek.
\newblock A chemotaxis model with threshold density and degenerate diffusion.
\newblock {\em In Nonlinear Elliptic and Parabolic Problems}, page 273–290,
  2005.

\bibitem{laurent2007local}
T.~Laurent.
\newblock Local and global existence for an aggregation equation.
\newblock {\em Comm. Partial Differential Equations}, 32(12):1941--1964, 2007.

\bibitem{mao2002environmental}
X.~Mao, G.~Marion, and E.~Renshaw.
\newblock Environmental brownian noise suppresses explosions in population
  dynamics.
\newblock {\em Stochastic Process. Appl.}, 97(1):95--110, 2002.

\bibitem{mehdaoui2024well}
M.~Mehdaoui.
\newblock Well-posedness results for a new class of stochastic spatio-temporal
  {SIR}-type models driven by proportional pure-jump {L}{\'e}vy noise.
\newblock {\em Appl. Math. Modell.}, 126:543--567, 2024.

\bibitem{mehdaoui2023analysisstoch}
M.~Mehdaoui, A.~L. Alaoui, and M.~Tilioua.
\newblock Analysis of a stochastic {SVIR} model with time-delayed stages of
  vaccination and {L}{\'e}vy jumps.
\newblock {\em Math. Methods Appl. Sci.}, 46(12):12570--12590, 2023.

\bibitem{mehdaoui2023dynamical}
M.~Mehdaoui, A.~L. Alaoui, and M.~Tilioua.
\newblock Dynamical analysis of a stochastic non-autonomous {SVIR} model with
  multiple stages of vaccination.
\newblock {\em J. Appl. Math. Comput.}, 69(2):2177--2206, 2023.

\bibitem{mehdaoui2024optimal}
M.~Mehdaoui, D.~Lacitignola, and M.~Tilioua.
\newblock Optimal social distancing through cross-diffusion control for a
  disease outbreak {PDE} model.
\newblock {\em Commun. Nonlinear Sci. Numer. Simul.}, 131:107855, 2024.

\bibitem{mehdaoui2023analysis}
M.~Mehdaoui, A.~Lamrani~Alaoui, and M.~Tilioua.
\newblock Analysis of an optimal control problem for a spatio-temporal {SIR}
  model with nonlinear density dependent diffusion terms.
\newblock {\em Optim. Control Appl. Methods}, 44(4):2227--2256, 2023.

\bibitem{mehdaoui2024new}
M.~Mehdaoui and M.~Tilioua.
\newblock A new optimal cross-diffusive control for a class of spatio-temporal
  predator-prey models.
\newblock {\em Optimization}, pages 1--40, 2024.

\bibitem{mogilner1999non}
A.~Mogilner and L.~Edelstein-Keshet.
\newblock A non-local model for a swarm.
\newblock {\em J. Math. Biol.}, 38:534--570, 1999.

\bibitem{newman1980some}
W.~I. Newman.
\newblock Some exact solutions to a non-linear diffusion problem in population
  genetics and combustion.
\newblock {\em J. Theoret. Biol.}, 85(2):325--334, 1980.

\bibitem{revuz2013continuous}
D.~Revuz and M.~Yor.
\newblock {\em Continuous martingales and Brownian motion}, volume 293.
\newblock Springer Science \& Business Media, 2013.

\bibitem{sabbar2024exploring}
Y.~Sabbar.
\newblock Exploring threshold dynamics of a behavioral epidemic model featuring
  two susceptible classes and second-order jump--diffusion.
\newblock {\em Chaos Solit. Fractals}, 186:115216, 2024.

\bibitem{sabbar2024probabilistic}
Y.~Sabbar, M.~Mehdaoui, M.~Tilioua, and K.~S. Nisar.
\newblock Probabilistic analysis of a disturbed {SIQP-SI} model of
  mosquito-borne diseases with human quarantine strategy and independent
  poisson jumps.
\newblock {\em Model. Earth Syst. Environ.}, pages 1--21, 2024.

\bibitem{simon1986compact}
J.~Simon.
\newblock Compact sets in the space ${L}^p(0,{T};{B})$.
\newblock {\em Ann. Mat. Pura Appl.}, 146:65--96, 1986.

\bibitem{tang2024strong}
H.~Tang and Z.-A. Wang.
\newblock Strong solutions to a nonlinear stochastic aggregation-diffusion
  equation.
\newblock {\em Comm. Contemp. Math.}, 26(02):2250073, 2024.

\bibitem{wang2012global}
Z.-A. Wang, M.~Winkler, and D.~Wrzosek.
\newblock Global regularity versus infinite-time singularity formation in a
  chemotaxis model with volume-filling effect and degenerate diffusion.
\newblock {\em SIAM J. Math. Anal.}, 44(5):3502--3525, 2012.

\bibitem{wrzosek2006long}
D.~Wrzosek.
\newblock Long-time behaviour of solutions to a chemotaxis model with
  volume-filling effect.
\newblock {\em Proc. Roy. Soc. Edinburgh Sect. A}, 136(2):431--444, 2006.

\end{thebibliography}
\end{document}